\newtheorem{theorem}{Theorem}[section]
\newtheorem{lemma}[theorem]{Lemma}
\theoremstyle{definition}
\newtheorem{definition}[theorem]{Definition}
\newtheorem{examples}[theorem]{Examples}
\newtheorem{proposition}[theorem]{Proposition}
\newtheorem{corollary}[theorem]{Corollary}
\newtheorem{remark}[theorem]{Remark}
\def\aut{\operatorname{Aut}}
\def\out{\operatorname{Out}}
\def\homeo{\operatorname{Homeo}}
\def\supp{\textrm{supp}}
\def\hom{\operatorname{Hom}}
\def\ab{\textrm{ab}}
\def\fix{\textrm{Fix}}
\begin{document}

\title[Sigma theory and twisted conjugacy-II] {Sigma theory and twisted conjugacy-II: Houghton groups and pure 
symmetric automorphism groups}
\author[ D. L. Gon\c{c}alves] {Daciberg L. Gon\c{c}alves}
\address{Departametno de Matem\'atica - IME, Universidade de S\~ao Paulo\\ Caixa Postal 66.281 - CEP 05314-970, S\~ao Paulo - SP, Brasil}
\email{dlgoncal@ime.usp.br}
\author[P. Sankaran]{Parameswaran Sankaran}
\address{The Institute of Mathematical Sciences\\
CIT Campus, Taramani,\\ 
Chennai 600113, India}
\email{sankaran@imsc.res.in}

\subjclass[2010]{20E45, 20E36.\\
Keywords and phrases: Twisted conjugacy, Reidemeister number, sigma theory, Houghton groups, infinite symmetric group, pure symmetric automorphism groups.}

\maketitle
\noindent
{\bf Abstract} {\it Let $\phi:\Gamma\to \Gamma$ be an automorphism of a group $\Gamma$.    
We say that 
$x,y\in \Gamma$ are in the same $\phi$-twisted conjugacy class and write $x\sim_\phi y$ if there exists an element $\gamma\in \Gamma$ such that $y=\gamma x\phi(\gamma^{-1})$.  This is an equivalence relation on $\Gamma$ called the $\phi$-twisted conjugacy.  Let $R(\phi)$ denote the number of  $\phi$-twisted conjugacy classes in $\Gamma$.  If $R(\phi)$ is infinite for all $\phi\in \aut(\Gamma)$, we say that $\Gamma$ has the $R_\infty$-property.    

The  purpose of this note is to show that the symmetric group $S_\infty$, the Houghton groups and 
the pure symmetric automorphism groups have the $R_\infty$-property.  We show, also, that the Richard Thompson group $T$ has the $R_\infty$-property.  We obtain a general result establishing the $R_\infty$-property of finite direct product of 
finitely generated groups.  

This is a sequel to an earlier work by 
Gon\c{c}alves and Kochloukova, in which it was shown, using the sigma-theory 
due to Bieri-Neumann-Strebel, that for most of the groups $\Gamma$ considered here, $R(\phi)=\infty$ where $\phi$ varies in a finite index subgroup of the automorphisms of $\Gamma$.  }

\section{Introduction} 

Let $\Gamma$ be a group and let $\phi:\Gamma\to \Gamma$ be an endomorphism.  Then $\phi$ determines an action $\Phi$ of 
$\Gamma$ on itself where, for  $\gamma\in \Gamma$ and $x\in \Gamma$, we have $\Phi_\gamma(x)=\gamma x \phi(\gamma^{-1})$.  The orbits of this action are called the $\phi$-twisted conjugacy classes. 
We write $x\sim_\phi y$ if $x$ and $y$ are in the same $\phi$-twisted conjugacy class.
Note that when $\phi$ is the identity automorphism, the orbits are the usual conjugacy classes of $\Gamma$.   We denote by $\mathcal{R}(\phi)$ the set of all $\phi$-twisted conjugacy classes and by $R(\phi)$ the cardinality $\#\mathcal{R}(\phi)$ of $\mathcal{R}(\phi)$.   
We say that $\Gamma$ has the $R_\infty$-property if $R(\phi)=\infty,$ that is if $\mathcal{R}(\phi)$ is infinite, for every automorphism $\phi$ of $\Gamma$.   

The problem of determining which groups have 
the $R_\infty$-property---more briefly the $R_\infty$-problem---has attracted the attention of many researchers 
after it was discovered that all non-elementary Gromov-hyperbolic groups have the $R_\infty$-property.  See \cite{ll} and \cite{felshtyn}.  It is particularly interesting when the 
group in question is finitely generated or countable. 
The notion of twisted conjugacy arises naturally in fixed point theory, representation theory, algebraic geometry and number theory.    In recent years the $R_\infty$-problem has emerged as an active research area. 
 
Recall that Houghton introduced a family of groups $H_n, n\ge 2,$ defined as follows:  Let $M_n:=\{1,2,\ldots, n\}\times \mathbb{N}$.    The group $H_n$ consists of all bijections $f:M_n\to M_n$ such that there exists integers $t_1,\ldots, t_n$ 
such that $f(j,s)=(j,s+t_j)$  for all $s\in \mathbb{N}$ sufficiently large and all $j\le n$.  Note that necessarily $\sum_{1\le j\le n}t_j=0$. 
Let $Z=\{(t_1,\ldots,t_n)\mid\sum_{1\le j\le n}t_j=0\}\subset \mathbb{Z}^n \}\cong \mathbb{Z}^{n-1}.$  
One has a surjective homomorphism 
$\tau:H_n\to Z\cong\mathbb{Z}^{n-1}$ sending $f$ to its {\it translation part} $(t_1,\ldots, t_n)$ (with notation as above).  
It is easily verified 
that $\tau$ is surjective with kernel the group of all {\it finitary} permutations of $M_n.$  
K. S. Brown \cite{brown} showed that $H_n$ is finitely presented for $n\ge 3$ and that 
it is $\textrm{FP}_{n-1}$ but not $\textrm{FP}_n$.   Note that the above definition of $H_n$ makes sense 
even for $n=1$ and we have $H_1\cong S_\infty$.   However, we treat the group $S_\infty$ separately 
and we shall always assume that $n\ge 2$ while considering the family $H_n$.  

Next we recall the group $G_n$, the group of {\it pure symmetric automorphisms} of the free group $F_n$ of rank $n\ge 2$.   Fix a basis $x_k, 1\le k\le n$, of $F_n$.  Denote by  
$\alpha_{ij}\in \aut(F_n), 1\le i\ne j\le n,$ the automorphism defined as 
$x_i\mapsto x_jx_ix_j^{-1}, x_k\to x_k, 1\le k\le n, k\ne i$.   The group $G_n$ is the subgroup of $\aut(F_n)$ 
generated by $\alpha_{ij}, 1\le i\ne j\le n$.  McCool showed that $G_n$ is finitely presented where 
the generating relations are: \\
(i) $[\alpha_{ij},\alpha_{kl}]=1,$ whenever $i,j,k,l$ are all different; \\(ii)  $[\alpha_{ik}, \alpha_{jk}],=1$ and $[\alpha_{ij}\alpha_{kj},\alpha_{ik}]=1$ whenever $i,j,k$ are all different.   

It was shown by Gon\c{c}alves and Kochloukova \cite{gk} that $R(\phi)=\infty$ for all $\phi$ in a finite index 
subgroup of the group of all automorphisms of $\Gamma$ where $\Gamma=H_n,G_n.$  
Our main result is the following theorem.  We give two proofs for the case of Houghton groups, neither 
of which use $\Sigma$-theory.  However we still need to use the results of \cite{gk} in the case of $G_n$.   

\begin{theorem}  \label{main} The following groups have the $R_\infty$-property:\\
(i) The group $S_\infty$ of finitary permutations of $\mathbb{N}$, \\ (ii) the Houghton groups $H_n, n\ge 2,$ and,\\  
(iii) the group $G_n, n\ge 2,$ of pure symmetric automorphisms of a free group of rank $n$.
\end{theorem}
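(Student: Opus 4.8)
The plan is to handle the three families by a common strategy: exhibit a characteristic quotient of $\Gamma$ on which every automorphism induces an automorphism with infinitely many twisted conjugacy classes, and then invoke the standard fact that $R_\infty$ passes up from a characteristic quotient. Concretely, if $N\trianglelefteq \Gamma$ is a characteristic subgroup and $\bar\phi$ denotes the automorphism induced by $\phi$ on $\Gamma/N$, then the projection $\Gamma\to\Gamma/N$ carries $\phi$-twisted classes onto $\bar\phi$-twisted classes surjectively, so $R(\phi)\ge R(\bar\phi)$; hence it suffices to show $R(\bar\phi)=\infty$ for all $\phi$. So the first step, for each group, is to pin down a useful characteristic quotient.

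For $S_\infty$ (part (i)): $S_\infty$ is simple (the finitary alternating group together with odd permutations — actually the finitary symmetric group on $\mathbb N$ has the finitary alternating group as its only proper nontrivial normal subgroup), so one cannot pass to a proper quotient. Instead I would argue directly. Every automorphism of $S_\infty$ is inner (a classical theorem; conjugation by a possibly non-finitary permutation of $\mathbb N$ normalizing $S_\infty$), so $\phi(\gamma)=\sigma\gamma\sigma^{-1}$ for a fixed permutation $\sigma$ of $\mathbb N$. Then $x\sim_\phi y$ iff $y=\gamma x\sigma\gamma^{-1}\sigma^{-1}$, i.e. iff $x\sigma$ and $y\sigma$ are conjugate in the full symmetric group by an element of $S_\infty$ — equivalently $x\sigma$ and $y\sigma$ have the same "cycle type relative to $S_\infty$". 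Since there are infinitely many finitary permutations $x$ for which $x\sigma$ realizes distinct such invariants (e.g. vary the number of transpositions in the finitary part, or the support size), we get $R(\phi)=\infty$. The one subtlety is making the cycle-type-invariant argument precise when $\sigma$ is infinitary, but it reduces to the observation that conjugation by $S_\infty$ can only rearrange the finitary part of the cycle structure of $x\sigma$ and cannot change, say, the total number of moved points modulo finite ambiguity — so an unbounded sequence of distinct invariants exists.

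For the Houghton groups $H_n$ (part (ii)): here the characteristic subgroup to use is $N=S_\infty(M_n)$, the group of finitary permutations, which is the unique maximal such normal subgroup and is manifestly characteristic; the quotient is $H_n/N\cong Z\cong\mathbb Z^{n-1}$ via $\tau$. Thus $R(\phi)\ge R(\bar\phi)$ where $\bar\phi\in\aut(\mathbb Z^{n-1})=GL_{n-1}(\mathbb Z)$. Now $R(\bar\phi)=\infty$ precisely when $\det(\bar\phi-\mathrm{id})=0$, i.e. when $1$ is an eigenvalue of $\bar\phi$; this need not hold for a general element of $GL_{n-1}(\mathbb Z)$, so the abelianization-type argument alone does not finish it, and this is exactly why \cite{gk} only got a finite-index statement. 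The fix is to use a second characteristic quotient or extra rigidity: one should show that the automorphism induced on $\mathbb Z^{n-1}$ is in fact severely constrained — the automorphisms of $H_n$ are known (by work on the automorphism groups of Houghton groups) to permute the $n$ "ends" and act on $Z$ through the resulting permutation representation of $S_n$ (possibly up to sign on $n=2$), so $\bar\phi$ is (a signed) permutation matrix, which always has $1$ as an eigenvalue on the standard representation's complement when... — more carefully, the permutation action of $S_n$ on $Z=\{(t_i):\sum t_i=0\}$ always fixes no nonzero vector only if the permutation is an $n$-cycle, so even here $\det(\bar\phi-\mathrm{id})$ can be nonzero. So the genuinely robust route, and the one I would actually carry out, is the alternative proof promised in the excerpt not using $\Sigma$-theory: work inside $H_n$ itself, use that $\phi$ fixes $N$ setwise, choose a transversal-compatible element, and build an explicit infinite family of $\phi$-twisted classes distinguished by an invariant living in $N/[N,N]$-type data twisted by $\phi$ — equivalently, reduce to the $S_\infty$ computation fibre-wise over a fixed value of $\tau$. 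The main obstacle is precisely handling the case where $\bar\phi-\mathrm{id}$ is invertible on $\mathbb Z^{n-1}$: there one must genuinely use the finitary part, showing that within a single coset of $\ker\tau$ (which one can arrange to be $\phi$-invariant up to translation) the induced twisted conjugacy on $N\cong S_\infty$ already has infinitely many classes, using that the $H_n/N$-action on classes of $N$ is by a locally finite group and so has infinite orbit space.

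For $G_n$ (part (iii)): as the excerpt states, here one still invokes \cite{gk}, which gives $R(\phi)=\infty$ for $\phi$ in a finite-index subgroup $A_0\le\aut(G_n)$. To upgrade this to all of $\aut(G_n)$, the plan is to combine two ingredients. First, the abelianization $G_n^{\ab}$: the relations (i),(ii) of McCool show $G_n^{\ab}\cong\mathbb Z^{n(n-1)}$ with basis the images of the $\alpha_{ij}$, and $\aut(G_n)$ acts on it; by a theorem identifying $\aut(G_n)$ (it is generated by inner automorphisms together with permutations of the basis of $F_n$ and inversions), the induced action on $G_n^{\ab}$ is by signed permutations of the $\alpha_{ij}$ coming from the $S_n\times(\mathbb Z/2)^n$ action, so one can often locate a $\phi$-fixed vector and conclude $R(\bar\phi)=\infty$ on $G_n^{\ab}$ — handling a cofinite set of automorphisms directly. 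Second, for the remaining finitely many coset representatives one uses the finite-index result of \cite{gk}: since $R_\infty$ is invariant under passing between $\phi$ and $\psi\phi\psi^{-1}$ (they have equal Reidemeister numbers) and, more usefully, if $\phi^k$ has $R(\phi^k)=\infty$ one can sometimes deduce $R(\phi)=\infty$ — but this last implication is false in general, so instead the clean argument is: the finite-index subgroup $A_0$ from \cite{gk} together with the characteristic-quotient computations must be shown to cover all of $\aut(G_n)$. The hard part of (iii), and arguably of the whole theorem, is this covering step — verifying that the union of "automorphisms with $R=\infty$ detected on a characteristic abelian quotient" and "$A_0$" exhausts $\aut(G_n)$ — which requires a concrete description of $\aut(G_n)$ and a case analysis on the induced signed-permutation matrix, checking in each case that either it has eigenvalue $1$ or the automorphism already lies in $A_0$.

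I expect the main obstacle throughout to be the same phenomenon: the easy "linear" invariant (action on an abelianization or on $\mathbb Z^{n-1}$) only yields $R=\infty$ when the relevant linear map has $1$ as an eigenvalue, and closing the gap forces one either to classify $\aut(\Gamma)$ explicitly and rule out the bad cases, or—more in the spirit of the promised $\Sigma$-free proof—to descend into the infinite/finitary part of the group and run an infinitude-of-cycle-types argument there, controlling the (locally finite) action of the quotient on that set of classes.
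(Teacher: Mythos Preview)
Your plan for (i) and (ii) is essentially the paper's approach. For $S_\infty$ the paper indeed shows $\aut(S_\infty)\cong S_\omega$ and reduces $\phi$-twisted conjugacy to $S_\infty$-conjugacy of $xf$ and $yf$ in $S_\omega$; for $H_n$ the paper's second proof is exactly your fibrewise reduction, using the addition formula (Lemma~\ref{additionformula}(ii)) rather than a ``locally finite orbit space'' argument: once $\bar\theta-\mathrm{id}$ is invertible on $Z\cong\mathbb Z^{n-1}$ one has $\fix(\iota_{\alpha N}\bar\theta)=0$ for every $\alpha$, and then \emph{distinct} $\theta'$-classes in $S_\infty$ inject into $\theta$-classes in $H_n$, so $R(\theta')=\infty$ from part~(i) finishes it. One caution on (i): the invariant you sketch (``number of moved points modulo finite ambiguity'') is too coarse to survive the case when $f$ has infinite cycles---multiplying an infinite cycle by a transposition can split off a finite cycle (see Lemma~\ref{transcycle}), so the careful case analysis and fixed-point counting the paper does is not avoidable.

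For (iii), however, there is a genuine gap. You correctly observe that the abelianization argument only handles automorphisms for which $\phi^*$ on $G_n^{\mathrm{ab}}\cong\mathbb Z^{n(n-1)}$ has eigenvalue~$1$, and you propose to close the remaining cases by showing they lie in the finite-index subgroup $A_0$ from \cite{gk}. But you give no mechanism for this covering, and in fact it is not how the paper proceeds. The paper instead \emph{descends one step further in the lower central series}: when $\phi^*$ has only eigenvalue $-1$ on the relevant span, it produces two $(-1)$-eigenvectors $u,v$ in $\hom(G_n,\mathbb R)$ with $\chi_{1,2}$ appearing in $u$ and $\chi_{2,1}$ in $v$, lifts them to elements $\beta,\gamma\in G_n$ with $\bar\phi(\bar\beta)=-\bar\beta$, $\bar\phi(\bar\gamma)=-\bar\gamma$, and observes that the commutator $[\beta,\gamma]\Gamma_3$ is then a \emph{$+1$-eigenvector} for the induced automorphism on $\Gamma_2/\Gamma_3$. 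Showing this element has infinite order (by projecting to $G_2\cong F_2$) and applying the addition formula to the two-step nilpotent quotient $G_n/\Gamma_3$ gives $R(\phi)=\infty$. This ``two $(-1)$'s make a $(+1)$ one level down'' idea is the missing ingredient in your plan.
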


Recall that Richard Thompson constructed three groups finitely presented infinite groups $F\subset T\subset V$ around 1965 
and showed that $T$ and $V$ are simple.  The groups $F, T,$ and $V$ arise as certain homeomorphism groups 
of the reals, the circle, and the Cantor set respectively.   Since then these constructions have been 
generalized by G. Higman \cite{higman}.  See also K. S. Brown \cite{brown}, R. Bieri and R. Strebel \cite{bs}, and M. Stein \cite{stein}.   For an introduction to the Thompson groups $F, T, V$ see \cite{cfp}. 

\begin{theorem}
The Richard Thompson group $T$ has the $R_\infty$-property.
\end{theorem}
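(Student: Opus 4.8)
The plan is to exploit the fact that the Thompson group $T$ is simple, together with the known structure of its outer automorphism group. Recall a general principle: if $\Gamma$ is a group such that every automorphism $\phi$ is, up to an inner automorphism and up to conjugation by an automorphism fixing a suitable ``characteristic'' subgroup, quite restricted, one can often pin down $R(\phi)$ using the following elementary observation. If $\phi,\psi$ are automorphisms of $\Gamma$ that differ by an inner automorphism, say $\psi=\iota_g\circ\phi$ where $\iota_g(x)=gxg^{-1}$, then $R(\phi)=R(\psi)$, since $x\mapsto gx$ induces a bijection of twisted conjugacy classes. Hence $R(\phi)$ depends only on the class of $\phi$ in $\out(\Gamma)$. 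So the first step is to recall (from Brin's theorem, or from the work of Brin--Guzm\'an) that $\out(T)\cong \mathbb{Z}/2$, generated by the automorphism induced by the orientation-reversing homeomorphism of the circle. Thus it suffices to handle two cases: $\phi$ inner (equivalently $\phi=\mathrm{id}$, giving ordinary conjugacy), and $\phi$ in the non-trivial coset.

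For the case $\phi=\mathrm{id}$: since $T$ is an infinite simple group, it has infinitely many conjugacy classes. Indeed, a group with only finitely many conjugacy classes and which is infinite would have to be rather special (by a theorem, a finitely generated group with finitely many conjugacy classes that is infinite exists, but $T$ is not of this type); more concretely, $T$ contains elements of infinite order and elements of every finite order $\ge 1$ realized by rotations, and one can produce infinitely many non-conjugate torsion elements or non-conjugate elements distinguished by, e.g., their ``rotation number'' or fixed-point data. So $R(\mathrm{id})=\infty$. By the reduction above, $R(\phi)=\infty$ for every inner $\phi$.

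For the remaining case, let $\sigma$ be the order-two automorphism of $T$ induced by conjugation by the flip $\rho\colon S^1\to S^1$, $z\mapsto \bar z$, inside $\homeo^+(S^1)$ — equivalently $\rho$ normalizes $T$ and $\sigma=\iota_\rho|_T$. Then the twisted conjugacy classes of any $\phi=\iota_g\circ\sigma$ are in bijection with those of $\sigma$ itself, so we only need $R(\sigma)=\infty$. Here the key is to find a $\sigma$-invariant homomorphic image of $T$, or more precisely an invariant quotient set, on which $\sigma$-twisted conjugacy is visibly infinite. Because $T$ is simple, it has no proper normal subgroups, so there is no abelianization trick; instead I would work inside $\homeo^+(S^1)$ and use the \emph{rotation number}. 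For $f\in T$ one has a well-defined rotation number $\mathrm{rot}(f)\in\mathbb{Q}/\mathbb{Z}$, and rotation number is a conjugacy invariant in $\homeo^+(S^1)$; moreover $\mathrm{rot}(\rho f\rho^{-1})=-\mathrm{rot}(f)$. For $\sigma$-twisted conjugacy, $y=\gamma x\sigma(\gamma^{-1})=\gamma x\rho\gamma^{-1}\rho^{-1}$, so $\mathrm{rot}(y\rho)=\mathrm{rot}(\gamma\cdot x\rho\cdot\gamma^{-1})=\mathrm{rot}(x\rho)$; thus $x\mapsto \mathrm{rot}(x\rho)\in\mathbb{Q}/\mathbb{Z}$ is constant on $\sigma$-twisted conjugacy classes (viewing $x\rho$ as an element of the larger group $\langle T,\rho\rangle\subset\homeo(S^1)$, in which $\rho$ has order $2$ and $x\rho$ is orientation-reversing). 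An orientation-reversing circle homeomorphism has two fixed points, so its ``rotation number'' in the naive sense is zero; this invariant therefore needs to be replaced by a finer one — for instance, the conjugacy invariant data of the orientation-reversing homeomorphism $x\rho$ given by the combinatorial type of its fixed-point set and the dynamics between consecutive fixed points, or the fact that $(x\rho)^2=x\sigma(x)\in T$ has a well-defined rotation number that is invariant under $\sigma$-twisted conjugacy of $x$. I would use this last observation: the map $x\mapsto \mathrm{rot}(x\,\sigma(x))\in\mathbb{Q}/\mathbb{Z}$ is constant on each $\sigma$-twisted conjugacy class (since $y\sigma(y)=\gamma\,(x\sigma(x))\,\gamma^{-1}$, using $\sigma^2=\mathrm{id}$ and $\sigma(\gamma)$ computations), and it takes infinitely many values in $\mathbb{Q}/\mathbb{Z}$ as $x$ ranges over $T$ (e.g. taking $x$ to be $T$-elements that are dyadic-rotation-like near the fixed points of $\rho$ realizes every value of the form $k/2^m$). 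Hence $R(\sigma)=\infty$, completing the proof.

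The main obstacle I anticipate is making the last step precise: verifying that $x\mapsto\mathrm{rot}(x\sigma(x))$ is genuinely well-defined and class-invariant requires care with the fact that $\sigma$ is only the restriction to $T$ of an \emph{inner} automorphism of $\homeo(S^1)$ but is \emph{outer} on $T$, and one must confirm that $x\sigma(x)$ lies in $T$ (it does, since $\sigma$ maps $T$ to $T$) and that enough values $k/2^m$ are attained — this is the computational heart and will use the concrete description of $T$ as PL homeomorphisms of $S^1=[0,1]/{\sim}$ with dyadic breakpoints and powers-of-two slopes, where the flip $\rho$ acts by $t\mapsto 1-t$. An alternative, possibly cleaner route avoiding rotation numbers altogether: quote directly from the literature (e.g. work on $R_\infty$ for Thompson-like groups, or Burillo--Matucci--Ventura type centralizer computations) that $T$ has infinitely many $\sigma$-twisted conjugacy classes; but the self-contained rotation-number argument above is the approach I would write out.
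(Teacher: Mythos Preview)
Your reduction via Brin's theorem to the two representatives $\mathrm{id}$ and $\rho$ is correct and is exactly how the paper proceeds. The treatment of $R(\mathrm{id})=\infty$ is also fine (the paper uses the number of components of the support as a conjugacy invariant rather than rotation numbers, but either works).

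The real problem is your invariant for the outer case. You correctly note that $x\sim_\sigma y$ implies $y\sigma(y)=\gamma\,(x\sigma(x))\,\gamma^{-1}$, so $\mathrm{rot}(x\sigma(x))$ is a class invariant. But this invariant is identically zero. Since $\rho^2=\mathrm{id}$ one has $x\sigma(x)=x\rho x\rho^{-1}=(x\rho)^2$, and $x\rho$ is an orientation\nobreakdash-reversing homeomorphism of $S^1$, hence has (exactly two) fixed points; therefore $(x\rho)^2$ fixes those points and $\mathrm{rot}\bigl((x\rho)^2\bigr)=0$ for every $x\in T$. You actually diagnosed this obstruction a few lines earlier (``an orientation-reversing circle homeomorphism has two fixed points, so its rotation number \ldots\ is zero''), but the replacement you propose runs into the very same wall: passing from $x\rho$ to $(x\rho)^2=x\sigma(x)$ does not escape the vanishing. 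In particular, your claim that dyadic-rotation-like choices of $x$ realize values $k/2^m$ is false; for rotations $x$ one even gets $x\sigma(x)=\mathrm{id}$ outright.

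What the paper does instead is to abandon rotation number entirely and use a combinatorial invariant that survives squaring: the number $\sigma(f)$ of connected components of $\supp(f)$. One builds, for each $k$, an element $h_k\in\mathrm{Fix}(\rho)$ of the form $h_k=f_k\cdot\rho(f_k)$ with $\supp(f_k)\subset(0,1/2)$ having $k$ components; then $\supp(h_k)$ has $2k$ components, and since $h_k$ is orientation-preserving one has $\supp(h_k^2)=\supp(h_k)$, so $\sigma(h_k^2)=2k$. Lemma~\ref{finiteorder} (with $n=2$) then gives $R(\rho)=\infty$. If you want to salvage your approach, the ``finer invariant'' you allude to would have to be something like this support-component count (or equivalently the fixed-point combinatorics of $x\rho$), not anything built from rotation number.
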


As the group $T$ is simple, $\Sigma$-theory yields no information about the $R_\infty$-property.
The above theorem was first proved by Burillo, Matucci, and Ventura \cite{bmv}. Shortly 
thereafter, Gon\c{c}alves and Sankaran \cite{gs} also independently obtained the same result.

In section \S2 we make some preliminary observations concerning  the $R_\infty$-property which will be 
needed for our purposes.   
Theorem \ref{main} will be established in \S3.   The $R_\infty$-property of the 
group $T$ will be proved in \S4.  In \S5 we consider the $R_\infty$-property finite direct product of groups 
and obtain a strengthening of a result of Gon\c{c}alves and Kochloukova \cite{gk}.

This is a sequel to the paper \cite{gk} by Gon\c{c}alves and Kochloukova.
We  reassure the reader that this paper can be read independently of it.  Although results from \cite{gk} are used,  we develop our own proof techniques to go forward. 

{\it If $f:X\to Y$ is a map of sets, we shall always write the argument to the right of $f$; thus $f(x)$ denotes 
the image of $x\in X$ under $f$. }

\section{Preliminaries}
We begin by recalling some general result concerning twisted conjugacy classes of an automorphism 
of a group and that of its restriction to a normal subgroup.   We obtain a criterion for a periodic automorphism 
to have infinitely many twisted conjugacy classes. We shall also briefly recall the notion of the Bieri-Neumann-Strebel 
invariant and give its known description in the case of Houghton groups and the pure symmetric automorphism groups.

\subsection{Addition formula} 
The following lemma may be found in \cite[\S2]{gw}.  For any element $g\in G$, we shall denote by $\iota_g$ the inner automorphism $x\mapsto 
gxg^{-1}$ of $G$.   When $N$ is a normal subgroup of $G$,  we shall abuse notation and denote by the same symbol 
$\iota_g$ the automorphism of $N$ got by restriction of $\iota_g$ to $N.$

\begin{lemma}\label{additionformula} 
Suppose that we have a commutative diagram of homomorphisms of groups 
where the vertical arrows are isomorphisms and horizontal rows are short exact sequence: 
\[ \begin{array}{ccccccccc}1 &\to& N&\stackrel{i}{\to}& G&\stackrel{p}{\to}& G/N&\to &1\\
 &&\downarrow \theta' &&\downarrow\theta&&\downarrow\bar{\theta} &&\\
 1 &\to& N&\stackrel{i}{\to}& G&\stackrel{p}{\to}& G/N&\to &1.\\
\end{array}
\]
Then: \\
(i) One has an exact sequence of (pointed) sets  $\mathcal{R}(\theta')\stackrel{i_*}{\to }\mathcal{R}(\theta)\stackrel{p_*}{\to} \mathcal{R}(\bar{\theta}) \to \{0\}$.   That is, $p_*$ is surjective and 
$\operatorname{Im}(i_*)=p_*^{-1}(\{N\})$.  \\
(ii) {\em (Addition Formula):}  Suppose that $R(\bar{\theta})<\infty$ and that $\fix(\iota_{\alpha N}\circ\bar{\theta})=\{N\}$ for all $\alpha\in G$.  Then $R(\theta)<\infty$ 
if and only if $R(\iota_\alpha\theta')<\infty$ for all $\alpha\in G$.  Moreover, the following {\em addition formula}  
holds if $R(\theta)<\infty$:
\[ R(\theta)=\sum_{[\alpha N]\in \mathcal{R}(\bar{\theta})} R(\iota_\alpha \theta').\]
\end{lemma}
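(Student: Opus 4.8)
The plan is to prove Lemma~\ref{additionformula} by working directly with the action $\Phi$ defining twisted conjugacy and exploiting the commutative diagram. For part (i), I would first establish that $p_*$ is well-defined on twisted conjugacy classes: if $x\sim_\theta y$ in $G$, say $y=\gamma x\theta(\gamma^{-1})$, then applying $p$ and using $p\theta=\bar\theta p$ gives $p(y)=p(\gamma)p(x)\bar\theta(p(\gamma)^{-1})$, so $p(x)\sim_{\bar\theta}p(y)$; the same computation shows $i_*$ is well-defined. Surjectivity of $p_*$ is immediate from surjectivity of $p$. The content is the identification $\operatorname{Im}(i_*)=p_*^{-1}(\{N\})$: the inclusion $\subseteq$ is clear since $i(N)\subseteq\ker p$. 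For $\supseteq$, take $x\in G$ with $p(x)\sim_{\bar\theta}N$, i.e. $p(x)=p(\gamma)\bar\theta(p(\gamma^{-1}))=p(\gamma\theta(\gamma^{-1}))$ for some $\gamma\in G$; then $\gamma^{-1}x\theta(\gamma)\in N$ and lies in the same $\theta$-class as $x$, giving a preimage in $N$, hence in $\operatorname{Im}(i_*)$ after checking it represents a $\theta'$-class — but here one must be slightly careful: $\theta'$ is the restriction of $\theta$ to $N$, and $i_*$ sends the $\theta'$-class of $n\in N$ to its $\theta$-class in $G$, which is exactly what we produced. This finishes (i).

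For part (ii), the strategy is to analyze the fibers of $p_*\colon\mathcal{R}(\theta)\to\mathcal{R}(\bar\theta)$. Fix a class $[\alpha N]\in\mathcal{R}(\bar\theta)$ and consider $p_*^{-1}([\alpha N])$. After translating by $\alpha$, I would reduce to understanding which elements of $\alpha N$ become $\theta$-conjugate in $G$. The natural move is to twist $\theta'$ by the inner automorphism $\iota_\alpha$: for $n,n'\in N$, the elements $\alpha n$ and $\alpha n'$ lie in the same $\theta$-class iff there is $\gamma\in G$ with $\alpha n'=\gamma\alpha n\theta(\gamma^{-1})$, and one wants to show — using the hypothesis $\fix(\iota_{\alpha N}\circ\bar\theta)=\{N\}$ — that $\gamma$ may be taken in $N$, at which point the condition rearranges to $n'=(\alpha^{-1}\gamma\alpha)n(\alpha^{-1}\theta(\gamma^{-1})\alpha)=(\alpha^{-1}\gamma\alpha)\,n\,(\iota_{\alpha^{-1}}\theta')(\alpha^{-1}\gamma^{-1}\alpha)$ up to the right bookkeeping, i.e. $n\sim_{\iota_{\alpha^{-1}}\theta'}n'$ (one may need to replace $\alpha$ by $\alpha^{-1}$ or absorb a twist — I would pin down the exact convention so that the fiber over $[\alpha N]$ is in bijection with $\mathcal{R}(\iota_\alpha\theta')$, matching the stated formula). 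The fixed-point hypothesis enters precisely to force $\gamma\in N$: if $p(\gamma)\ne N$, then $p(\gamma)$ would be a nontrivial fixed point of $\iota_{p(\alpha)}\bar\theta$ after the reduction $p(\alpha)=p(\gamma)p(\alpha)\bar\theta(p(\gamma)^{-1})$, contradicting the assumption.

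Granting the fiber identification, the remaining assertions follow formally. Since $R(\bar\theta)<\infty$, $\mathcal{R}(\theta)$ is a finite disjoint union $\bigsqcup_{[\alpha N]}p_*^{-1}([\alpha N])$ of $R(\bar\theta)$ fibers, each of cardinality $R(\iota_\alpha\theta')$; hence $R(\theta)<\infty$ iff every fiber is finite iff $R(\iota_\alpha\theta')<\infty$ for all $\alpha\in G$, and when finite the cardinalities add up to give $R(\theta)=\sum_{[\alpha N]\in\mathcal{R}(\bar\theta)}R(\iota_\alpha\theta')$. I would also remark that the summand depends only on the class $[\alpha N]$, not on the representative $\alpha$: replacing $\alpha$ by $\alpha n$ ($n\in N$) or by a $\bar\theta$-conjugate changes $\iota_\alpha\theta'$ by an inner automorphism of $N$, which does not affect the Reidemeister number.

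The main obstacle I anticipate is getting the fiber-counting in part (ii) exactly right: one must carefully track how conjugating $\theta'$ by $\iota_\alpha$ interacts with the coset translation, verify that the correspondence between $p_*^{-1}([\alpha N])$ and $\mathcal{R}(\iota_\alpha\theta')$ is a genuine bijection (both well-defined and injective, not merely surjective), and check that the fixed-point condition is exactly what is needed to lift the conjugating element $\gamma$ from $G$ into $N$. The set-theoretic exactness in (i) is routine by comparison; the real work is the bookkeeping that turns the exact sequence of pointed sets into an honest partition with controlled fiber sizes.
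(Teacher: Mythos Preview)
Your proposal is correct and follows essentially the same approach as the paper: for (ii) the paper likewise uses the fixed-point hypothesis to force the conjugating element from $G$ into $N$, and then identifies each fiber of $p_*$ with a Reidemeister set $\mathcal{R}(\iota_\alpha\theta')$ to obtain the addition formula (the paper skips (i) as well-known). One small difference worth noting: the paper uses \emph{right} translation $x\mapsto x\alpha$ rather than your left translation $n\mapsto\alpha n$, and with that choice the relation $y=zx\,\iota_\alpha\theta(z^{-1})$ is equivalent to $y\alpha=z(x\alpha)\theta(z^{-1})$, so the twist comes out as exactly $\iota_\alpha\theta'$ without the adjustment you anticipated.
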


\begin{proof} We prove only (ii), assertion (i) being well-known and easy.

Let $\alpha\in G$, and $x,y\in N$. 
Suppose that $x,y$ are in the same $\iota_\alpha\theta$-twisted conjugacy class when regarded as elements of $G$.  
That is $y=zx\iota_\alpha \theta(z^{-1})$ for some $z\in G.$    
Applying the projection $p:G\to G/N$ and denoting $xN$ by $\bar{x}$ we obtain $\bar{1}=\bar{y}=
\bar{z}\iota_{\bar{\alpha}}\bar{\theta}(z^{-1})$ and so $\bar{z}\in \fix(\iota_{\bar{\alpha}}\bar{\theta})=\{\bar{1}\}$.  
Hence $z\in N$ and so $y$ and $x$ are $\iota_\alpha\theta'$-twisted conjugates.  Thus  
we see that  
distinct $\iota_\alpha\theta'$-conjugacy classes map to {\it distinct} $\iota_\alpha\theta$- conjugacy classes.  This implies that $R(\theta')<\infty$ if $R(\theta)<\infty$.  

Also, $y=zx\iota_\alpha \theta(z^{-1})$ 
implies that $y\alpha=z(x\alpha)\theta(z^{-1})$ and so $y\alpha, x\alpha\in G$ are $\theta$-twisted conjugates.  Thus translation on the right by $\alpha$ of $\theta'$-twisted conjugacy classes of $N$ are contained $\theta$-twisted 
conjugacy classes.  

Suppose that $x\alpha, x\beta$ are in the same $\theta$-twisted conjugacy class where $x\in N, \alpha, \beta\in G$. Then 
there exists a $u\in G$ such that $x\beta=ux\alpha \theta(u^{-1})$.  Applying the projection we obtain that 
$\bar{\beta} =\bar{u} \bar{\alpha} \bar{\theta}(\bar{u}^{-1})$.  Thus $\bar{\alpha}$ and $\bar{\beta}$ are 
$\bar{\theta}$-twisted conjugates.    This implies that the  
right translates of $\theta'$-twisted conjugacy classes by elements 
$\alpha,\beta$ belong to the same $\theta$-twisted conjugacy class only if that $\bar{\alpha}$ and $\bar{\beta}$ 
are $\bar{\theta}$-twisted conjugates. 
Conversely, if $\bar{\alpha}$ and $\bar{\beta}$ are $\bar{\theta}$ twisted 
conjugates, then by reversing the arguments, we see that the translates by $\alpha$ and $\beta$ of $\theta'$-twisted conjugacy classes 
are contained in the same $\theta$-twisted conjugacy class. This establishes the addition formula.
\end{proof} 

\begin{remark}\label{minusid}
 Note that if $G/N\cong \mathbb{Z}^n, n<\infty,$ and if $1$ is not an eigenvalue of the matrix of $\bar{\theta}$  
 with respect to a basis of $G/N$, then 
for any $\alpha\in G, \fix(\iota_{\alpha N}\circ \bar{\theta})=\fix(\bar{\theta})$ consists only of the trivial element. So the lemma implies that, if $R(\theta')=\infty$, then $R(\theta)=\infty$.
\end{remark}

\subsection{Periodic outer automorphisms} \label{periodic}

Let $\Gamma$ be a group with infinitely many conjugacy classes.  Then, for any automorphism $\phi:\Gamma\to \Gamma$, and any $g\in G$, $R(\phi)=R(\iota_g\circ \phi)$ where $\iota_g$ denotes the inner automorphism $x\mapsto gxg^{-1}$.  
Indeed it is readily seen that the $\phi$-twisted conjugacy classes are the same as the left translation by $g$ of the 
$\iota_g\circ\phi$-twisted conjugacy classes.  Thus $\Gamma$ has the $R_\infty$-property if and only if $R(\phi)=\infty$ for a set of coset representatives of $\out(\Gamma)=\aut(\Gamma)/Inn(\Gamma).$  We have the following lemma.  
Compare \cite{gs}. 

\begin{lemma} \label{finiteorder}
Let $\theta\in \aut(\Gamma)$ and let $n\ge 1$.   
Suppose that $\{x^n\mid x\in Fix(\theta)\}$ is not contained 
in the union of finitely many $\theta^n$-twisted conjugacy classes of $\Gamma$.  Then $R(\theta)=\infty$. 
\end{lemma}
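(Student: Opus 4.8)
I want to show that if the set $\{x^n \mid x\in \fix(\theta)\}$ meets infinitely many $\theta^n$-twisted conjugacy classes, then $\theta$ itself has infinitely many twisted conjugacy classes. The natural strategy is contrapositive in spirit: I will produce a map from $\theta$-twisted conjugacy classes to $\theta^n$-twisted conjugacy classes under which the image of the $\theta$-class of any fixed point $x\in\fix(\theta)$ is the $\theta^n$-class of $x^n$; surjectivity onto (a set containing) these images together with the hypothesis then forces $R(\theta)=\infty$.

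**Key steps.** First I would recall the standard ``power map'' construction: for $x\in\Gamma$, iterating the twisted-conjugacy relation gives
\[
x \sim_\theta \gamma x\,\theta(\gamma)^{-1}
\quad\Longrightarrow\quad
(\gamma x\theta(\gamma)^{-1})^n = \gamma\, x\,\theta(x)\,\theta^2(x)\cdots\theta^{n-1}(x)\,\theta^n(\gamma)^{-1},
\]
which one checks by telescoping: the inner factors $\theta^j(\gamma)^{-1}\theta^j(\gamma)$ cancel. Hence the assignment
\[
w_\theta(x) := x\,\theta(x)\,\theta^2(x)\cdots\theta^{n-1}(x)
\]
sends a whole $\theta$-twisted conjugacy class of $\Gamma$ into a single $\theta^n$-twisted conjugacy class; this gives a well-defined map $P\colon \mathcal{R}(\theta)\to\mathcal{R}(\theta^n)$. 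Second, I specialize to $x\in\fix(\theta)$: then $\theta^j(x)=x$ for all $j$, so $w_\theta(x)=x^n$, and therefore $P$ sends the $\theta$-class $[x]_\theta$ to the $\theta^n$-class $[x^n]_{\theta^n}$. Third, by hypothesis the elements $x^n$, $x\in\fix(\theta)$, lie in infinitely many distinct $\theta^n$-twisted conjugacy classes, so the image of $P$ restricted to the classes $\{[x]_\theta : x\in\fix(\theta)\}$ is infinite; a fortiori $\mathcal{R}(\theta)$ is infinite, i.e. $R(\theta)=\infty$.

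**Main obstacle.** The only genuinely delicate point is verifying that $w_\theta$ is compatible with twisted conjugacy, i.e. that $w_\theta(\gamma x\theta(\gamma)^{-1})$ is $\theta^n$-twisted conjugate to $w_\theta(x)$ — the telescoping identity above. One has to be a little careful about the convention (the paper writes arguments on the right of maps) and about which side $\gamma$ sits on, but it is a routine induction on $n$: assuming the identity for $n-1$, multiply on the right by $\theta^{n-1}(\gamma x\theta(\gamma)^{-1}) = \theta^{n-1}(\gamma)\theta^{n-1}(x)\theta^n(\gamma)^{-1}$ and cancel the adjacent $\theta^{n-1}(\gamma)^{-1}\theta^{n-1}(\gamma)$. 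Everything else — that fixed points collapse the word to a genuine $n$-th power, and that an infinite image forces an infinite domain — is immediate. I would also remark, as a sanity check, that for $n=1$ the statement degenerates to ``if $\fix(\theta)$ is not a finite union of $\theta$-classes then $R(\theta)=\infty$,'' which is trivially true.
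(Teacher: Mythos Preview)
Your argument is correct and follows essentially the same telescoping idea as the paper: both proofs show that if $x,y\in\fix(\theta)$ are $\theta$-twisted conjugate then $x^n,y^n$ are $\theta^n$-twisted conjugate, and then conclude by contrapositive. The only difference is packaging: you define the ``norm'' $w_\theta(x)=x\,\theta(x)\cdots\theta^{n-1}(x)$ for \emph{all} $x\in\Gamma$ and obtain a global map $P:\mathcal{R}(\theta)\to\mathcal{R}(\theta^n)$, whereas the paper works only with fixed points and multiplies the equations $y=\theta^i(z^{-1})\,x\,\theta^{i+1}(z)$ for $0\le i<n$ directly; the telescoping computation is identical in both cases.

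One small slip to fix: your first displayed formula should read $w_\theta(\gamma x\theta(\gamma)^{-1})=\gamma\,w_\theta(x)\,\theta^n(\gamma)^{-1}$, not $(\gamma x\theta(\gamma)^{-1})^n=\cdots$; the literal $n$-th power of $\gamma x\theta(\gamma)^{-1}$ does not telescope, but $w_\theta$ applied to it does, exactly as your later inductive verification shows.
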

\begin{proof}
Let $x\sim_\theta y$ in $\Gamma$ where $x,y\in \fix(\theta)$. Thus there exists an $z\in \Gamma$ such that 
$y=z^{-1}x\theta(z)$.  Applying $\theta^i$ both sides,  
we obtain $y=\theta^i(z^{-1})x\theta^{i+1}(z)$ as $x,y\in \fix(\theta)$.  
Write $\phi:=\theta^n$.
Multiplying these equations successively for $0\le i<n$,  we obtain  
\[ y^n=\prod_{0\le i<n} \theta^i(z^{-1})x\theta^{i+1}(z)
=z^{-1}x^n\theta^n(z)=z^{-1}x^n\phi(z).\]
That is, $y^n\sim_{\phi} x^n$.  
Our hypothesis says that there are infinitely many elements 
$x_k\in \fix(\theta), k\ge 1$, such that the $x_k^n$  are in 
pairwise distinct $\phi$-twisted conjugacy classes of $\Gamma$.   Hence 
we conclude that $R(\theta)=\infty$.  
\end{proof}

\begin{remark} \label{torsionfix}
When $\theta^n=\iota_\gamma$ is an inner automorphism, we see from the above lemma 
that $R(\theta)=\infty$ 
if $\{x^n\gamma\mid x\in Fix(\theta)\}$ is not contained in a finite union of conjugacy classes of $\Gamma$. 
If $\theta^n=id$, then $R(\theta)=\infty$ if $Fix(\theta)$ contains elements of order $k$ for arbitrarily large values of $k\in \mathbb{N}$. 
 \end{remark}

\subsection{$\Sigma$-theory of $H_n$ and $G_n$}\label{sigmath}

Bieri, Neumann, and Strebel \cite{bns} introduced, for any finitely generated group $\Gamma$, an invariant $\Sigma(\Gamma)$
which is a certain open subset---possibly empty---of the character sphere $S(\Gamma):=\hom(\Gamma,\mathbb{R})\setminus\{0\}/\mathbb{R}_{>0}$ where the action of the multiplicative group of positive reals is via scalar multiplication.   The automorphism group $\aut(\Gamma)$ acts on $S(\Gamma)$ where $\phi^*:S(\Gamma)\to S(\Gamma)$ 
is defined as $[\chi]\mapsto [\chi\circ \phi], [\chi]\in S(\Gamma)$, for $\phi\in\aut(\Gamma)$. 
This action preserves the subspace  
$\Sigma(\Gamma)$ and hence also its complement $\Sigma^c(\Gamma)$.   
If the image of the homomorphism $\eta: \aut(\Gamma)\to \homeo(\Sigma^c(\Gamma))$ is a finite group, then $K=\ker(\eta)$ is a finite index subgroup of $\aut(\Gamma)$ which fixes every character class in $\Sigma^c(\Gamma)$.  
 This happens, for example, if $\Sigma^c(\Gamma)$ is a non-empty  
finite set.
If $\Sigma^c(\Gamma)$ contains a {\it discrete} character class $[\chi]$, that is, a class 
represented by an character $\chi$ whose image $\chi(\Gamma)\subset \mathbb{R}$ is infinite cyclic, then it was observed by  Gon\c calves and Kochloukova \cite{gk} that the character $\chi$ itself is fixed by the action of $K$ on 
$\hom(\Gamma,\mathbb{R})$.     That is, $\chi\circ \phi=\chi$ for all $\phi\in K\subset \aut(\Gamma)$.  This easily 
implies that $R(\phi)=\infty$ by Lemma \ref{additionformula}(i), taking $G=\Gamma, N=\ker{\chi}, \theta=\phi$ in the 
notation of that lemma, so that $\bar{\theta}=id$.  

When $\Gamma=G_n, n\ge 3,$ the 
group of pure symmetric automorphisms of $F_n$, L. Orlandi-Korner \cite{o} has determined $\Sigma^c(\Gamma)$. 
When $\Gamma=H_n$, the Houghton group, Brown \cite{brown1} computed the set $\Sigma^c(\Gamma)$. 
Using these results, Gon\c calves and Kochloukova, showed that if $\Gamma$ 
is any one of the groups $H_n, n\ge 2, G_m, m\ge 3$, the image of $\eta:\aut(\Gamma)\to \homeo(\Sigma^c(\Gamma))$ is finite.

In the case of the Houghton group $H_n, n\ge 2$, it turns out that $\Sigma^c(H_n)$ is a finite set of discrete 
character classes $[\chi_j], 1\le j\le n$.  Explicitly, $\chi_j:H_n\to \mathbb{Z}$ may be taken to be 
$-\pi_i\circ \tau$ where $\tau:H_n\to Z$ is the translation part (see \S1) and $\pi_i:Z\to \mathbb{Z}$ is the restriction of to $Z\subset \mathbb{Z}^n$ 
of the $i$-th projection (see \cite{brown1}).  (Recall from \S1 that $Z=\{(t_1,\ldots, t_n)\in \mathbb{Z}^n\mid \sum_{1\le j\le n}t_j=0\}$.) 
Thus $\homeo(\Sigma^c(H_n))\cong S_n$ is finite and so is the image of $\eta:\aut(H_n)\to \homeo(\Sigma^c(H_n))$. 
As already remarked $R(\phi)=\infty$ for all $\phi\in \ker (\eta)$.     The lemma 
below will not be used in the sequel but included here for illustrative purposes.

\begin{lemma}
Suppose that $\eta(\phi):\Sigma^c(H_n)\to \Sigma^c(H_n)$ is not an $n$-cycle.   Then $R(\phi)=\infty$.
\end{lemma}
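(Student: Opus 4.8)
The statement to prove is: if $\eta(\phi)$ is not an $n$-cycle on $\Sigma^c(H_n) = \{[\chi_1],\dots,[\chi_n]\}$, then $R(\phi)=\infty$. The idea is to use the fact that $\eta(\phi)$ is a permutation $\sigma\in S_n$ acting by permuting the discrete character classes $[\chi_j]$, and to exploit that a non-$n$-cycle permutation has a nontrivial cycle decomposition with at least two cycles. I would first replace $\phi$ by an inner-modification: since $H_n$ has infinitely many conjugacy classes (its abelianization factors through $\mathbb{Z}^{n-1}$, or more elementarily the finitary part alone already has infinitely many conjugacy classes), we may by the discussion of \S\ref{periodic} and the observation preceding Lemma \ref{finiteorder} replace $\phi$ by any representative $\iota_g\phi$ of its image in $\out(H_n)$ without changing whether $R(\phi)=\infty$. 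So it suffices to work with the action on characters.

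**Key steps.** First, identify the combinatorial consequence: if $\sigma = \eta(\phi)\in S_n$ is not an $n$-cycle, then its orbits on $\{1,\dots,n\}$ partition into blocks, at least one of which, say $A\subsetneq\{1,\dots,n\}$, has $1 \le |A| \le n-1$ and is $\sigma$-invariant. Second, I would pass to the character $\chi_A := \sum_{j\in A}\chi_j = -(\sum_{j\in A}\pi_j)\circ\tau : H_n\to\mathbb{Z}$. Because $\sum_{j\in A}\pi_j$ restricted to $Z$ is a nonzero homomorphism onto $\mathbb{Z}$ (as $A\neq\emptyset$ and $A\neq\{1,\dots,n\}$ — here one uses $\sum_{1\le j\le n}\pi_j|_Z = 0$ but a proper sub-sum is nonzero), $\chi_A$ is a discrete (infinite-cyclic-image) character. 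The crucial point: since the $\chi_j$ are permuted by $\phi^*$ according to $\sigma$, and $A$ is $\sigma$-invariant, we get $\chi_A\circ\phi = \chi_{\sigma^{-1}(A)} = \chi_A$ (up to possibly a sign which one must track — the action might send $[\chi_j]$ to $[\chi_{\sigma(j)}]$ without a sign issue since these are discrete classes and $\phi^*$ preserves the specific characters up to $\mathbb{R}_{>0}$, and over $\mathbb{Z}$-valued characters this pins down the sign; I'd verify $\phi^*$ actually fixes each $\chi_j$ up to the permutation exactly, not merely projectively). Third, apply Lemma \ref{additionformula}(i) with $G=H_n$, $N=\ker\chi_A$, $\theta=\phi$, so that $\bar\theta = \mathrm{id}$ on $H_n/N\cong\mathbb{Z}$; then $R(\bar\theta)=\infty$ forces $R(\phi)=\infty$ via surjectivity of $p_*$.

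**Main obstacle.** The delicate point is the sign/exactness in "$\chi_A\circ\phi = \chi_A$". The action $\eta$ records only the permutation of classes in $\Sigma^c$, i.e. of the rays $[\chi_j]$, so a priori $\chi_j\circ\phi = c_j\,\chi_{\sigma(j)}$ for some $c_j\in\mathbb{R}_{>0}$; but since both $\chi_j$ and $\chi_j\circ\phi$ take values in $\mathbb{Z}$ and are primitive (surjective onto $\mathbb{Z}$), we get $c_j=1$, so genuinely $\chi_j\circ\phi = \chi_{\sigma(j)}$ on the nose, and hence $\chi_A\circ\phi=\chi_A$ once $A$ is $\sigma$-invariant. (If instead one worried about $-1$, one would note all $\chi_j$ are nonnegative on translation parts pointing the same way; but primitivity already suffices.) Once this identity is nailed down, the rest is the immediate application of Lemma \ref{additionformula}(i). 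I'd also remark that the hypothesis "not an $n$-cycle" is exactly what guarantees the existence of a proper nonempty $\sigma$-invariant $A$: when $\sigma$ \emph{is} an $n$-cycle the only invariant subsets are $\emptyset$ and the whole set, for which $\chi_A$ is trivial and the argument collapses — which is why a separate (and harder) argument is needed in that case, handled elsewhere in the paper.
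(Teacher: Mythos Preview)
Your proof is correct and is essentially the paper's argument: the paper takes the $\eta(\phi)$-orbit of $[\chi_1]$ (a particular $\sigma$-invariant proper nonempty subset), forms the orbit sum $\lambda$, observes it is nonzero because any $n-1$ of the $\chi_j$ form a basis of $\hom(H_n,\mathbb{R})$, uses discreteness to pass from $[\chi_j\circ\phi]=[\chi_{\sigma(j)}]$ to $\chi_j\circ\phi=\chi_{\sigma(j)}$ and hence $\phi^*(\lambda)=\lambda$, and concludes $R(\phi)=\infty$. Your careful handling of the primitivity issue and your nonvanishing argument via proper sub-sums of the $\pi_j$ on $Z$ are exactly the points the paper makes, just phrased slightly differently.
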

\begin{proof}  Since $\eta(\phi)$ is not an $n$-cycle, the orbit of $[\chi_1]$ under $\eta(\phi)$ consists of at most 
$n-1$ elements.  Since $\chi_1$ are all discrete,  
the orbit of $\chi_1\in \hom(H_n,\mathbb{R})$ consists of at most $n-1$ elements.  (In fact the orbit of $\chi_1$ 
is a subset of $\{\chi_j\mid 1\le j\le n\}$.)  Now the orbit sum $\lambda:=\sum_{1\le j\le k} \chi_1\phi^j$ is a {\it non-zero} character 
since any $n-1$ elements of $\chi_j, 1\le j\le n$ form a {\it basis} of $\hom(H_n,\mathbb{R})$.  It follows that, since 
$\phi^*(\lambda)=\lambda$, $R(\phi)=\infty$. 
\end{proof}

If $\phi^*:\Sigma^c(H_n)\to \Sigma^c(H_n)$ is an $n$-cycle, then the orbit sum is zero and the above 
argument fails.  In fact, it is easily seen that every possible permutation of $\Sigma^c(H_n)$ 
may be realized as $\eta(\phi)$ for some $\phi\in \aut(H_n)$, that is, $\eta:\aut(H_n)\to \homeo(\Sigma^c(H_n))\cong S_n$ 
is surjective.


\section{Proof of Theorem \ref{main}}

Let $X$ be an infinite set.   We will only be concerned with the case when $X$ is countably infinite.   
We shall denote by $S_\infty(X)$ the group of all finitary permutations of $X$, that 
is those permutations which fixes all but finitely many elements of $X$.  The group of {\it all} permutations 
of $X$ will be denoted by $S(X)$.  We shall denote $S(X)$ (resp. $S_\infty(X)$) 
simply by $S_\omega$ (resp. $S_\infty$) when $X$ is clear from the context. If $x=(x_k)_{k\in \mathbb{Z}}$ is a doubly infinite sequence in $X$ of pairwise distinct 
elements, we regard 
it as an element of $S(X)$ where $x(x_k)=x_{k+1}$ and $x(a)=a$ if $a\ne x_k~\forall k\in \mathbb{Z}$.  
Two such sequences $x=(x_k)$ and $y=(y_k)$ define the same permutation if and only if $y$ is a shift of $x$, that is,  
there exists an $n$ such that 
$x_k=y_{k+n}$ for all $k\in \mathbb{Z}$.   Thus, the sequence $x=(x_k)_{k\in \mathbb{Z}}$ is just the 
infinite cycle $x\in S(X)$.    
Any $f\in S(X)$ is uniquely expressible as a product of disjoint cycles.  Such an expression of $f$ is its {\it cycle decomposition}.   
The {\it cycle type} 
of an $f\in S(X)$ is the function $c(f):\mathbb{N}\cup\{\infty\}\to \mathbb{Z}_{\ge 0}\cup \{\infty\}$  where $c(f)(\alpha)$ is the number of 
$\alpha$-cycles in the cycle decomposition of $f$ if that number is finite, otherwise it is $\infty$ for $\alpha\in \mathbb{N}\cup\{\infty\}$.    As in the case 
$S_\infty(X)$,  if $f$ and $g$ have the same cycle type, then they are conjugate in $S(X)$.  
We need a criterion for $f$ and $g$ to be conjugate by an element of $S_\infty(X)$.

\begin{lemma} \label{transcycle}   
Let $x=(x_k)_{k\in\mathbb{Z}}, y=(y_k)_{k\in \mathbb{Z}}\in S_\omega(X)$ be two disjoint infinite cycles and let $(a,b)\in S_\infty$. \\
(i)  If $a=x_0, b=x_k, k>0,$
then $(a, b)x=uv$, a product of disjoint cycles $u=(u_j)_{j\in \mathbb{Z}}\in S_\omega, v\in S_\infty$, defined as 
\[u_j=\left\{\begin{array}{cc} x_j, & j<0,\\
x_{j+k}, &j\ge 0,\\ 
\end{array}
\right .
\]
and $v=(x_0,\ldots,x_{k-1})\in S_\infty$. 

(ii)  If $a=x_0, b=y_0$, then $(a,b)xy=uv$, where $u=(u_j)_{j\in \mathbb{Z}}, v=(v_j)_{j\in \mathbb{Z}}$ are disjoint infinite 
cycles defined as 
\[u_j=\left\{\begin{array}{cc} x_j, &j<0,\\
y_j, &j\ge 0,\\
\end{array}
\right .
\] and 
\[v_j=\left\{\begin{array}{cc} y_j, &j<0,\\
x_j, &j\ge 0. \\
\end{array} 
\right . 
\]~ \hfill $\Box$
\end{lemma}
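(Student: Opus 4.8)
The plan is to prove both parts by a direct, element-by-element verification, comparing where the two sides of each claimed identity send each point of $X$. The first observation is that every permutation appearing in part (i) fixes each point of $X$ outside $\{x_j\mid j\in\mathbb{Z}\}$, and every permutation appearing in part (ii) fixes each point outside $\{x_j\mid j\in\mathbb{Z}\}\cup\{y_j\mid j\in\mathbb{Z}\}$; hence it suffices to compare the actions on these points. Throughout I would keep in mind the standing conventions $x(x_j)=x_{j+1}$, $y(y_j)=y_{j+1}$, that $(a,b)$ denotes the transposition swapping $a$ and $b$, and that the product of permutations is composition, so that in $(a,b)x$ the cycle $x$ is applied first and the transposition afterward.

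For part (i) I would set $(a,b)=(x_0,x_k)$ and compute, for each $j$, that $(a,b)x(x_j)=(a,b)(x_{j+1})$, which equals $x_{j+1}$ unless $j+1\in\{0,k\}$, the exceptional cases giving $(a,b)x(x_{-1})=x_k$ and $(a,b)x(x_{k-1})=x_0$. On the other side, $u$ and $v$ have disjoint supports, namely $\supp(u)=\{x_j\mid j<0\}\cup\{x_j\mid j\ge k\}$ and $\supp(v)=\{x_0,\ldots,x_{k-1}\}$, whose union is all of $\{x_j\mid j\in\mathbb{Z}\}$; since $u$ and $v$ commute I can evaluate $uv$ on each $x_j$ separately, using $u(u_j)=u_{j+1}$ and $v=(x_0,\ldots,x_{k-1})$, and this reproduces the same assignment, the only nontrivial cases being $x_{-1}\mapsto x_k$ (from $u$) and $x_{k-1}\mapsto x_0$ (from $v$). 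Finally, since the $u_j$ are pairwise distinct, $u$ is an infinite cycle, so $u\in S_\omega$, and $v$, being a $k$-cycle, lies in $S_\infty$, which matches the statement.

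Part (ii) I would treat the same way, now with $(a,b)=(x_0,y_0)$, using that $x$ and $y$ are disjoint, so $y$ fixes every $x_j$ and $x$ fixes every $y_j$. A short computation then gives that $(a,b)xy$ sends $x_j\mapsto x_{j+1}$ for $j\neq-1$, $x_{-1}\mapsto y_0$, $y_j\mapsto y_{j+1}$ for $j\neq-1$, and $y_{-1}\mapsto x_0$. On the other side, $\supp(u)=\{x_j\mid j<0\}\cup\{y_j\mid j\ge0\}$ and $\supp(v)=\{y_j\mid j<0\}\cup\{x_j\mid j\ge0\}$ are disjoint and their union is all the $x_j$ and $y_j$; both $u$ and $v$ are infinite cycles since their defining sequences are injective; and evaluating the disjoint product $uv$ on each $x_j$ and $y_j$ gives precisely the assignment just computed for $(a,b)xy$.

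I do not expect any genuine obstacle: the argument is pure bookkeeping, and the only points demanding care are the composition convention (so that the transposition really does cut, or splice, the cycle it follows) and the indices $j=-1$ and, in part (i), $j=k-1$, where the rearrangement actually occurs. As an aside, one could instead invoke the standard fact that left-multiplying a cycle by a transposition of two points in its support splits that cycle in two, and conversely that left-multiplying two disjoint cycles by a transposition of one point from each merges them; but since later arguments need the explicit descriptions of $u$ and $v$, carrying out the direct verification above is the cleaner route.
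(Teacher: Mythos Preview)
Your proposal is correct. The paper provides no proof of this lemma at all: the statement simply ends with a $\Box$, treating it as a routine verification left to the reader. Your direct element-by-element check, tracking the exceptional indices $j=-1$ and (in part (i)) $j=k-1$ and using the convention that in a product the rightmost permutation acts first, is exactly the computation the authors have in mind; there is nothing to compare.
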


If $k\in \mathbb{N}$, we denote by $\mathbb{N}_{>k}$ the set of all integers greater than $k$.      
Note that $S_\infty=\cup_{k\ge 2}S_k$ where $S_k$ is the subgroup 
consisting of permutations of $\mathbb{N}$ which fixes all $n>k$.   In particular, the group $S_\infty$ is generated  by 
transpositions $(i, i+1), i\ge 1$.    The alternating group $A_\infty$ equals the commutator subgroup $[S_\infty, S_\infty]$,  has index $2$ in $S_\infty$ and is simple.   
The conjugacy class of any element of $S_\infty$ is determined by its cycle type, as in the case 
of finite symmetric groups.  The group $S_\infty$ is a normal subgroup of $S_\omega=S(\mathbb{N})$.   
In particular, 
any bijection $f:\mathbb{N}\to \mathbb{N}$ defines an automorphism $\iota_f\in \aut(S_\infty)$ by restricting the inner 
automorphism determined by $f\in S_\omega$.  Moreover $\iota_f$ is the identity automorphism only if $f$ is equals the  identity map.  
The following lemma is perhaps well-known, although we could not find a reference for it.

\begin{lemma} \label{autsym} 
The homomorphism $\iota: S_\omega\to \aut(S_\infty)$ is an isomorphism of groups. 
\end{lemma}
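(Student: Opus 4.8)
The map $\iota$ is visibly a homomorphism, and its injectivity has already been noted above: if $\iota_f$ fixes every transposition $(i,j)$, then $\{f(i),f(j)\}=\{i,j\}$ for all $i\neq j$, which forces $f$ to be the identity. So the content of the lemma is surjectivity: given $\phi\in\aut(S_\infty)$, we must produce $f\in S_\omega$ with $\phi=\iota_f$. The plan is the classical one of recovering $f$ from the way $\phi$ permutes transpositions, and it proceeds in three steps.

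\emph{Step 1: an intrinsic description of transpositions.} I claim that the conjugacy class $\mathcal{T}$ of transpositions is the \emph{unique} conjugacy class of involutions of $S_\infty$ with the property that the product of any two of its members has order at most $3$. One inclusion is immediate: two transpositions are either equal, or disjoint (product of order $2$), or share exactly one point (product a $3$-cycle). For the converse one checks that if $t$ is a product of $k\geq 2$ disjoint transpositions then some conjugate $t'$ of $t$ has $tt'$ of order $\geq 4$; concretely, writing $t=(12)(34)\sigma$ with $\sigma$ a product of disjoint transpositions disjoint from $\{1,2,3,4\}$, and choosing a point $c$ outside the (finite) support of $t$, the conjugate $t'=(13)(2c)\sigma$ gives $tt'=(12)(34)(13)(2c)$, a $5$-cycle. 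Since an automorphism of $S_\infty$ preserves conjugacy classes, orders of elements, and the property above, $\phi$ must carry $\mathcal{T}$ onto itself; in particular it sends transpositions to transpositions.

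\emph{Step 2: recovering $f$.} For $i\in\mathbb{N}$ let $\mathcal{P}_i=\{(i,j)\mid j\neq i\}$ be the pencil of transpositions moving $i$; two transpositions $s,t$ satisfy ``$st$ has order $3$'' exactly when they meet in a single point. A short combinatorial argument (analysing a third transposition not through the common point of two others) shows that a family of transpositions meeting pairwise in a single point lies either inside one pencil $\mathcal{P}_i$ or inside a triangle $\{(a,b),(b,c),(c,a)\}$; hence the maximal such families are precisely the pencils (which are infinite) and the triangles (which have three elements), and these are distinguished by cardinality. Since $\phi$ preserves the relation ``$st$ has order $3$'' among transpositions, as well as cardinalities, it permutes the pencils, and we may define $f:\mathbb{N}\to\mathbb{N}$ by $\phi(\mathcal{P}_i)=\mathcal{P}_{f(i)}$. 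Running the same argument for $\phi^{-1}$ shows $f$ is a bijection, that is, $f\in S_\omega$.

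\emph{Step 3: conclusion, and the main difficulty.} For $i\neq j$ we have $\mathcal{P}_i\cap\mathcal{P}_j=\{(i,j)\}$, so $\phi((i,j))\in\mathcal{P}_{f(i)}\cap\mathcal{P}_{f(j)}=\{(f(i),f(j))\}$; thus $\phi((i,j))=(f(i),f(j))=\iota_f((i,j))$. Since the transpositions generate $S_\infty$ and $\phi$ and $\iota_f$ agree on them, $\phi=\iota_f$, which proves surjectivity and hence that $\iota$ is an isomorphism. I expect the main obstacle to be Step 1 --- both isolating the correct group-theoretic characterization of transpositions and verifying, uniformly in $k\geq 2$, the existence of a conjugate whose product has order $\geq 4$; the combinatorial classification of pairwise-meeting families in Step 2 is a secondary technical point.
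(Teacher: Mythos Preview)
Your proof is correct, and each of your three steps goes through as written; the $5$-cycle computation in Step~1 and the pencil/triangle dichotomy in Step~2 are both sound.

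The paper's argument reaches the same conclusion but by a genuinely different route in the key first step. Where you characterize transpositions via ``any two conjugates have product of order $\le 3$'', the paper instead computes centralizers: if $\theta((1,2))=\xi$ is a product of $k$ disjoint transpositions, then $Z(\xi)\cong Z_0\times S(\mathbb{N}_{>2k})$ with $|Z_0|=2^kk!$, whereas $Z((1,2))\cong S_2\times S_\infty$ has only three proper finite-index normal subgroups (of index $2$, $2$, $4$); comparing with the alternating subgroup of index $2^{k+1}k!$ in $Z(\xi)$ forces $k=1$. Your product-order criterion is more elementary and avoids this structural computation, at the cost of the small case analysis producing the $5$-cycle; the centralizer route gives more information about the ambient structure but is heavier. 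For the reconstruction of $f$, the paper proceeds ad hoc by tracking images of $(k,k+1)$ and $(k,k+2)$ and using non-commutativity to isolate the common moved point; your pencil formulation packages the same combinatorics more cleanly and makes the bijectivity of $f$ transparent. Either path is perfectly adequate here.
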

\begin{proof}
By the discussion above, it only remains to show that 
$\iota$ is surjective.  Let $\theta:S_\infty\to S_\infty$ be an isomorphism.  
We claim that $\theta$ maps transpositions to transpositions.  Indeed, let $\tau=(1,2)$ so that $\theta(\tau)$ 
is an element of order $2$. Hence $\theta(\tau):=\xi$ is a product of disjoint transpositions $\xi=\tau_1.\ldots.\tau_k$. 
To establish our claim, we must show that $k=1$. 
(Since $\tau$ is an odd permutation, $k$ must be odd. This fact is not required here.)   Without loss of generality, we assume that $\tau_i=(i ,k+i), 1\le i\le k$, so that $\xi=(1,k+1)(2,k+2)\ldots (k,2k)$.

The 
centralizer of $\tau$ is the subgroup $Z(\tau)=\{\sigma\in S_\infty\mid \sigma\tau=\tau\sigma\}=
S_2\times S_\infty(\mathbb{N}_{\ge 3})\subset S_\infty.$    

The automorphism $\theta$ maps the $Z(\tau)$ isomorphically onto $Z(\theta(\tau))=Z(\xi)$.
On the other hand, if $\sigma\in S_\infty$, then $\sigma\xi\sigma^{-1}=\sigma\tau_1\ldots \tau_k \sigma^{-1}=(\sigma(1),\sigma(k+1))\ldots (\sigma(k),\sigma(2k))$.   Thus $\xi=\sigma \xi\sigma^{-1}$ holds if and only if $\sigma\in S_{2k}\times S(\mathbb{N}_{>2k})$ and 
$|\sigma(j+k)-\sigma(j)|=k$ for $1\le j\le k$.  That is, $Z(\xi)=Z_0\times S(\mathbb{N}_{>2k})$ where $Z_0\subset S_{2k}$ 
is the centralizer of $\xi\in S_{2k}.$  
Denote by $\beta:S_k\to S_{2k}$ the monomorphism $(\beta(h))(j)=h(j), \beta(h)(j+k) =h(j)+k, 1\le j\le k$.  Then we see that $\sigma\in Z_0$ if and only if $\sigma=\tau_{i_1}\ldots \tau_{i_r}\beta(h)$ for some $h\in S_k$ and $1\le i_1<\cdots<i_r\le k$; moreover the element $h$ and the (possibly empty) sequence $i_1,\ldots, i_r$ are determined uniquely by $\sigma$.   Consider the homomorphism $Z_0\to (\mathbb{Z}/2\mathbb{Z})^k\rtimes S_k$ defined as $\sigma\mapsto (t, h)$ where $t_j=1$ or $0$ according as $j$ belongs to $\{i_1,\ldots, i_r\}$ or not.  (Here the group $S_k$ operates on $(\mathbb{Z}/2\mathbb{Z})^k$ by permuting the coordinates.) It is easily seen that this is an isomorphism 
of groups.
In particular $Z_0$ has order $2^k.k!$.

Since $\theta$ is an automorphism, we must have $S_2\times S(\mathbb{N}_{>2})\cong Z(\tau)\cong Z(\theta(\tau))=Z(\xi)
\cong  Z_0\times S(\mathbb{N}_{>2k})$.  
Now $Z(\xi)$ has a normal subgroup of index $2^{k+1}.k!$, namely the alternating group on $\mathbb{N}_{>2k}$.  On the other hand, 
the only finite index proper normal subgroups of $Z(\tau)\cong S_2\times S_\infty$ are $1\times A_\infty$, which 
has index $4$, and  $S_2\times A_\infty$ 
and $1\times S_\infty$, each of which has index $2$.  It follows that $k=1$.
Thus $\theta((1, 2))=(1, 2)$.  
By the same argument $\theta(1, 3)=(i , j)$.  We assert that either $i\in \{1, 2\}$ or $j\in \{1,2\}$.  In fact if $\{i, j\}\cap \{1,2\}=\emptyset$ or if $\{i, j\}=\{1,2\}$, then $\theta((1, 2))$ and $\theta((1, 3))$ commute, a contradiction.  
Suppose that $i\in \{1,2\}$ and $j\notin \{1,2\}$.  We set $f(1)=i$.  In general, $f(k)$ is defined to be the unique number $l$ where $\theta((k,  k+1))=(l, a), \theta
((k,k+2))=(l, b)$.    Then $f:\mathbb{N}\to \mathbb{N}$ is a monomorphism.  It is a surjection since $\theta$ is. 
It is readily seen that $\theta=\iota_f$.  
\end{proof}

\begin{corollary} \label{autgen} 
{\em 
Suppose that $S_\infty$ is a characteristic subgroup of a group $H$ contained in $S_\omega$.  
Then the automorphism group of $H$ is isomorphic to the normalizer of $N(H)$ of $H$ in $S_\omega$. 
In particular, every automorphism of $H$ is the restriction to $H$ of a unique inner automorphism of 
$S_\omega$.}
\end{corollary}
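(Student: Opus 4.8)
The plan is to bootstrap from Lemma~\ref{autsym}. Fix $\theta\in\aut(H)$. Since $S_\infty$ is characteristic in $H$, the restriction $\theta':=\theta|_{S_\infty}$ is an automorphism of $S_\infty$, so by Lemma~\ref{autsym} there is a \emph{unique} $f\in S_\omega$ with $\theta'(s)=fsf^{-1}$ for every $s\in S_\infty$. The heart of the matter is to upgrade this to the assertion that $\theta(h)=fhf^{-1}$ for \emph{every} $h\in H$, i.e.\ that $\theta$ is the restriction of the inner automorphism $\iota_f$ of $S_\omega$.

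To see this, let $h\in H$ and $s\in S_\infty$. Since $S_\infty$ is a normal subgroup of $S_\omega$ and $h\in S_\omega$, we have $hsh^{-1}\in S_\infty$; applying $\theta$ and using $\theta|_{S_\infty}=\iota_f$ on both $s$ and $hsh^{-1}$ gives
\[ \theta(h)\,(fsf^{-1})\,\theta(h)^{-1}=\theta(hsh^{-1})=f\,(hsh^{-1})\,f^{-1}. \]
Rearranging, the element $g:=h^{-1}f^{-1}\theta(h)f$ satisfies $gsg^{-1}=s$ for all $s\in S_\infty$; that is, $g$ lies in the centralizer of $S_\infty$ in $S_\omega$. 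As observed just before Lemma~\ref{autsym}, this centralizer is trivial (equivalently, $\iota:S_\omega\to\aut(S_\infty)$ is injective), so $g=1$ and hence $\theta(h)=fhf^{-1}$. In particular $fHf^{-1}=\theta(H)=H$, so $f\in N(H)$.

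It remains to package this as an isomorphism. The assignment $N(H)\to\aut(H)$, $f\mapsto \iota_f|_H$, is manifestly a homomorphism; it is injective because any element of its kernel centralizes $H\supseteq S_\infty$ and is therefore trivial, and it is surjective by the previous paragraph. Hence it is an isomorphism, and $\aut(H)\cong N(H)$. The final assertion---that each $\theta\in\aut(H)$ is the restriction of a \emph{unique} inner automorphism of $S_\omega$---is exactly the surjectivity together with this injectivity, i.e.\ the uniqueness of $f$.

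Since everything reduces to Lemma~\ref{autsym} and the triviality of the centralizer of $S_\infty$ in $S_\omega$, there is no serious obstacle. The one step that genuinely requires care is the passage from $S_\infty$ to all of $H$ in the second paragraph: here one must actually invoke that $S_\infty$ has trivial centralizer in $S_\omega$, rather than merely that $\iota$ is surjective.
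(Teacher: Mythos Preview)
Your proof is correct and follows the same overall strategy as the paper: use Lemma~\ref{autsym} to produce $f\in S_\omega$ with $\theta|_{S_\infty}=\iota_f$, and then show $\theta=\iota_f$ on all of $H$. The difference lies only in how this last step is carried out. The paper argues pointwise: writing $u=\phi(h)$ and $v=fhf^{-1}$, it takes an arbitrary transposition $(a,b)$, observes that $\phi$ and $\iota_f$ agree on $h(a,b)h^{-1}\in S_\infty$, and deduces $\{u(i),u(j)\}=\{v(i),v(j)\}$ for all $i\ne j$, whence $u=v$. You instead package the same computation algebraically: the element $g=h^{-1}f^{-1}\theta(h)f$ centralizes $S_\infty$ in $S_\omega$, and that centralizer is trivial. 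Your version is a bit slicker and makes the role of the trivial centralizer (equivalently, the injectivity of $\iota:S_\omega\to\aut(S_\infty)$) completely explicit; the paper's version is more concrete but is really the same calculation unwound onto transpositions.
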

\begin{proof}
We shall denote by the same symbol $\iota_f$ to denote the conjugation by $f\in S_\omega$ or its restriction 
to any subgroup normalized by $f$.

It is evident that $\iota:N(H)\to \aut(H)$ defined as $f\mapsto \iota_f$ defines an homomorphism. (Here $\iota_f(h)=fhf^{-1}~\forall h\in H$.)    This is a monomorphism since $\iota_f$ is non-trivial on $S_\infty\subset H$ if $f$ is not 
the identity.  

Let $\phi:H\to H$ be any automorphism and let $f\in S_\omega$ be the element such that $\phi|_{S_\infty}=\iota_f$.
We claim that $\phi=\iota_f$.   
Suppose that $u:=\phi(h), \iota_f(h)=fhf^{-1}=:v$ for some $h\in H$.  We must show that $u(i)=v(i)$ for all $i\in \mathbb{N}$.  It suffices 
to show that $\{u(i), u(j)\}=\{v(i),v(j)\}$ for all $i, j\in \mathbb{N}, i\ne j$.
Let $i,j\in \mathbb{N}$, $i\ne j$. 
Now consider the transposition $(a,b)\in S_\infty$ such that 
$\iota_f(a,b)=\phi(a,b)=(i,j)$.  We have 
$\phi(h(a,b)h^{-1})=\phi(h)\phi(a,b)\phi(h^{-1})=u(i,j)u^{-1}=(u(i),u(j))$, while $\iota_f(h(a,b)h^{-1})=
\iota_f(h) \iota_f(a,b)\iota_f(h^{-1})=v(i,j)v^{-1}=(v(i),v(j))$.   Therefore $(u(i), u(j))=(v(i),v(j))\in S_\infty$ since 
$\iota_f$ and $\phi$ agree on $S_\infty$.  This implies that $\{u(i), u(j)\}=\{v(i),v(j)\}$, completing the proof. 
\end{proof}

\subsection{$S_\infty$ has the $R_\infty$-property}
Let $\theta\in \aut(S_\infty)$.   In view of Lemma \ref{autsym}
$\theta=\iota_f$ for some $f\in S_\omega$.  Let $x, y\in S_\infty$ and suppose that $y=zx\theta(z^{-1})=zxfz^{-1}f^{-1}$ for some $z\in S_\infty$.   Then $yf=z(xf)z^{-1}$ in $S_\omega$ for some $z\in S_\infty$.    
For any cycle (finite or infinite) $u=(u_j)$, we have that $zuz^{-1}$ is the cycle $(z(u_j)).$    Any $z\in S_\infty$  
moves only finitely many elements of $\mathbb{N}$.  Hence  when $u$ is an infinite cycle $z(u_j)=u_j$ for all but 
finitely many $j\in \mathbb{Z}$.  For an arbitrary element $u$ expressed as a product of pairwise disjoint cycles, $u(\alpha)=(u(\alpha)_j)$, 
the element $zuz^{-1}$ being a product of $zu(\alpha)z^{-1}$,  we see that $zu(\alpha)z^{-1}=u(\alpha)$ for all but 
a finitely many $\alpha$, and, moreover, if $u(\alpha)=(u(\alpha)_j)_{j\in \mathbb{Z}})$ is an 
infinite cycle, then $z(u(\alpha)_j)=u(\alpha)_j$ for all but finitely many $j\in \mathbb{Z}$.
\footnote{There is a mild abuse of notation here; $u(\alpha)$ is not to be confused with the value of $u$ at $\alpha$. 
We will use Greek letters as labels in such situations.} 

\begin{lemma} \label{infcycle}
Suppose that $f\in S_\omega$ has an infinite cycle $u$, then there exist infinitely many transpositions $\tau_k\in S_\infty$ 
such that $\tau_j f \ne z\tau_k fz^{-1}$  for any $z\in S_\infty$.
\end{lemma}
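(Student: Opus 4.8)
The goal is to produce infinitely many transpositions $\tau_k\in S_\infty$ whose right translates $\tau_kf$ lie in pairwise distinct conjugacy classes of $S_\omega$ under conjugation by the \emph{small} group $S_\infty$. The natural strategy is to exploit the infinite cycle $u=(u_j)_{j\in\mathbb{Z}}$ of $f$ and to pick transpositions built from the support of $u$. Concretely, I would write $f=u\cdot g$ where $g$ is the product of the remaining (disjoint) cycles of $f$, and for $k\ge 1$ set $\tau_k:=(u_0,u_k)$. Then by Lemma \ref{transcycle}(i), $\tau_k u = u^{(k)}v_k$ where $u^{(k)}$ is again an infinite cycle (disjoint from $g$ since it has the same support as $u$) and $v_k=(u_0,u_1,\ldots,u_{k-1})$ is a $k$-cycle in $S_\infty$. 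Hence $\tau_k f = u^{(k)} v_k g$, a product of pairwise disjoint cycles: one infinite cycle, one $k$-cycle, and the cycles of $g$.

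The key point is then a \emph{conjugacy invariant} under $S_\infty$ that detects $k$. If $\tau_j f = z(\tau_k f)z^{-1}$ for some $z\in S_\infty$, then conjugation by $z$ must carry the cycle decomposition of $\tau_k f$ to that of $\tau_j f$, permuting cycles of equal length. As recalled just before the lemma, $z$ fixes all but finitely many points, so it fixes all but finitely many of the cycles appearing, and in particular it must send the infinite cycle $u^{(k)}$ to the infinite cycle $u^{(j)}$ and the $k$-cycle $v_k$ to the $j$-cycle $v_j$ — forcing $j=k$ \emph{unless} some finite cycle of $g$ has length $k$ or $j$ and could be swapped in. To kill that ambiguity I would simply restrict attention to the infinitely many indices $k$ that do \emph{not} occur as the length of any finite cycle of $g$: since $g$ has at most countably many finite cycles, if infinitely many lengths are missing we are done immediately; and if cofinitely many lengths \emph{do} occur among the finite cycles of $g$, then $g$ already has infinitely many finite cycles and one runs a parallel argument (or notes that in that case $f$ itself has elements of unbounded order in a fixed coset and invoke Remark \ref{torsionfix}-type reasoning). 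The cleanest writeup is: choose $k$ so large that no cycle of $g$ (finite or infinite) has length $k$; then the $k$-cycle $v_k$ is the unique $k$-cycle in the disjoint-cycle decomposition of $\tau_kf$, so any $z\in S_\infty$ conjugating $\tau_k f$ to $\tau_j f$ must send $v_k$ to a $k$-cycle of $\tau_j f$, whence $j=k$.

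There is one subtlety to nail down: that $\tau_kf\ne z\tau_k fz^{-1}$ is automatic, but we must make sure distinct $k$'s really give \emph{non-conjugate} (by $S_\infty$) elements, i.e. that the decomposition $\tau_kf = u^{(k)}v_k g$ has disjoint cycles. This holds because $\tau_k$ is supported on $\{u_0,u_k\}\subseteq\supp(u)$, so $\tau_k u$ is supported on $\supp(u)$, which is disjoint from $\supp(g)$; Lemma \ref{transcycle}(i) then gives the claimed form with $u^{(k)},v_k$ supported inside $\supp(u)$. I also need $v_k\in S_\infty$, which is clear as it is a finite cycle, and $u^{(k)}$ genuinely infinite, which Lemma \ref{transcycle}(i) provides.

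\textbf{Main obstacle.} The only real obstacle is the bookkeeping around the "extra'' cycles $g$ of $f$: a priori $g$ could contain finite cycles of every length, which would let an element of $S_\infty$ shuffle the newly created $k$-cycle $v_k$ with a pre-existing cycle of $g$ of the same length and spoil the invariant. The fix — passing to lengths $k$ not realized by any cycle of $g$, and handling the complementary case separately by a symmetric argument — is what makes the proof go through, so I would spend the bulk of the writeup making that case division precise rather than on the (routine) cycle computation.
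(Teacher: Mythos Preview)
Your setup is exactly the paper's: take $\tau_k=(u_0,u_k)$ and use Lemma~\ref{transcycle}(i) to write $\tau_kf=u^{(k)}\,v_k\,g$ with $u^{(k)}$ an infinite cycle and $v_k$ a $k$-cycle, all pairwise disjoint. The divergence is in the invariant you choose to separate the $\tau_kf$: you try to read off $k$ from the \emph{finite} cycle $v_k$, while the paper uses the \emph{infinite} cycle $u^{(k)}$.

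There is a genuine gap in your argument, precisely in the ``complementary case'' you flag but do not resolve. If $g$ has infinitely many cycles of length $k$ for \emph{every} $k\ge2$---a perfectly legitimate element of $S_\omega$---then adjoining one more $k$-cycle leaves the cycle type unchanged, and there are no ``lengths not realized by $g$'' to retreat to. In that situation $\tau_kf$ and $\tau_jf$ have identical cycle types, hence are conjugate in $S_\omega$, and your $v_k$-count gives no obstruction to their being conjugate by an element of $S_\infty$. Your suggested fixes do not work: the ``cleanest writeup'' tacitly assumes the cycle lengths of $g$ are bounded; the ``parallel argument'' is unspecified; and Remark~\ref{torsionfix} concerns fixed points of an automorphism, which is not what the lemma is about.

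The paper's route avoids any case split on $g$ by exploiting the observation recorded just before the lemma: conjugation by $z\in S_\infty$ alters only finitely many entries of an infinite cycle. Since $u^{(k)}_p=u_p$ for all $p<0$ while $u^{(k)}_p=u_{p+k}$ for $p\ge0$, the cycles $u^{(j)}$ and $u^{(k)}$ (for $j\ne k$) share the entire negative tail yet differ at every nonnegative index, so no $z\in S_\infty$ can carry one to the other; and $zu^{(k)}z^{-1}$ cannot equal any infinite cycle of $g$ either, since its support meets $\{u_p\}$ in an infinite set while the infinite cycles of $g$ are disjoint from $\{u_p\}$. This forces $j=k$ with no hypothesis on $g$ whatsoever. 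Switch to $u^{(k)}$ as your invariant and the proof becomes a single paragraph.
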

\begin{proof}
Fix an infinite cycle $u=(u_\alpha)_{\alpha\in \mathbb{Z}}$ that occurs in the cycle decomposition of $f$.  Let $\tau_\alpha=(u_0, u_\alpha), \alpha\ge 1$.  
Then we claim that $\tau_\alpha f$ and $\tau_\beta f$ are not conjugates if $\alpha\ne \beta$.  To see this, we apply Lemma \ref{transcycle} to 
compute $\tau_\alpha u, \alpha\ge 1.$   Note that the cycles that occur in $\tau_\alpha u$ also occur in the cycle decomposition 
of $\tau_\alpha f$.  This is true in particular of the infinite cycle, denoted $v(\alpha)$, that occurs in $\tau_\alpha u$.

  Now 
 $v(\alpha)_p=v(\beta)_p=u_p$ for all $p<0$ and $\alpha,\beta\ge 1$, 
and, when $\alpha\ne \beta$, $u_{p+\alpha}=v(\alpha)_p\ne v(\beta)_p=u_{p+\beta}, p\ge 0$.  
This implies that the $zv(\beta)z^{-1}$ cannot occur in $\tau_\alpha f$  for any $z\in S_\infty$ if $\alpha \ne \beta$ in its  
cycle decomposition by the assertion made in the paragraph above the statement of the lemma.   Hence $\tau_\alpha f\ne z\tau_\beta fz^{-1}$ 
for any $z\in S_\infty$.   \end{proof}
 
We are now ready to prove the first assertion of Theorem \ref{main}.

\begin{theorem}\label{symmetricgroup} 
The group $S_\infty$ has the $R_\infty$-property. 

\end{theorem}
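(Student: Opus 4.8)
The plan is to combine Lemma~\ref{autsym}, which identifies $\aut(S_\infty)$ with $S_\omega$ via $\iota$, with Lemma~\ref{finiteorder} and Lemma~\ref{infcycle}, splitting into two cases according to the cycle type of the element $f\in S_\omega$ representing a given automorphism $\theta=\iota_f$.

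\emph{Case 1: $f$ has an infinite cycle.} Here Lemma~\ref{infcycle} does the work directly. It produces infinitely many transpositions $\tau_\alpha\in S_\infty$ such that the elements $\tau_\alpha f\in S_\omega$ are pairwise non-conjugate by elements of $S_\infty$. But $y\sim_\theta x$ in $S_\infty$ means precisely $yf=z(xf)z^{-1}$ for some $z\in S_\infty$, i.e.\ $xf$ and $yf$ are $S_\infty$-conjugate in $S_\omega$; so the right translates $\tau_\alpha f$ lying in distinct $S_\infty$-conjugacy classes means the $\tau_\alpha$ lie in distinct $\theta$-twisted conjugacy classes. Hence $R(\theta)=\infty$.

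\emph{Case 2: every cycle of $f$ is finite.} First I would reduce to the case where $f\in S_\infty$ is itself finitary: if $f$ moves infinitely many points but has only finite cycles, I claim the argument of Lemma~\ref{infcycle} can be adapted---pick a cycle $w=(w_0,\dots,w_{m-1})$ of $f$ and transpositions $\tau_\alpha=(w_0,c_\alpha)$ where $c_\alpha$ runs through a point of the $\alpha$-th disjoint cycle among infinitely many cycles of $f$; by Lemma~\ref{transcycle}(ii) (applied with the two finite cycles, in the obvious finite analogue) the element $\tau_\alpha f$ has a cycle of length $m+(\text{length of the }\alpha\text{-th cycle})$ through $w_0$, and these lengths can be arranged to be distinct, so the cycle types of $\tau_\alpha f$ differ and again the $\tau_\alpha$ are in distinct $\theta$-twisted classes. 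So we are left with $f\in S_\infty$, where $\theta=\iota_f$ is an \emph{inner} automorphism of $S_\infty$. Since $S_\infty$ has infinitely many conjugacy classes, by the remark in \S\ref{periodic} (or directly), $R(\iota_f)=R(\mathrm{id})$, and $R(\mathrm{id})$ counts the ordinary conjugacy classes of $S_\infty$, which are parametrized by cycle types and hence infinite in number (e.g.\ the $k$-cycles $(1,2,\dots,k)$, $k\ge 1$, are pairwise non-conjugate). Thus $R(\theta)=\infty$ in this case too. Alternatively, one could invoke Lemma~\ref{finiteorder}/Remark~\ref{torsionfix}: when $f$ has finite order $n$, $\fix(\iota_f)$ contains elements of arbitrarily large order, giving $R(\theta)=\infty$; but the conjugacy-class count is cleaner and covers $f$ of infinite order with all cycles finite simultaneously.

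The main obstacle is the intermediate subcase of Case~2: an $f\in S_\omega$ that is not finitary but has no infinite cycle (infinitely many disjoint finite cycles). One must be slightly careful that the "distinct cycle types of $\tau_\alpha f$" argument is literally valid---the point is that $\tau_\alpha f$ has, besides whatever cycles of $f$ it leaves untouched, exactly one cycle through $w_0$ of a controlled length, and conjugation by $z\in S_\infty$ preserves cycle type, so distinct cycle types force distinct $S_\infty$-conjugacy classes in $S_\omega$; choosing the $\alpha$-th auxiliary cycle of $f$ to have length tending to infinity (or simply all distinct) makes this work. Once this case is handled, the three cases---infinite cycle, infinitely many finite cycles, finitary---exhaust $S_\omega=\aut(S_\infty)$ and the theorem follows.
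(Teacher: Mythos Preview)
Your Case~1 and the finitary subcase of Case~2 are fine and match the paper.  The gap is in the intermediate subcase of Case~2, where $f\notin S_\infty$ has only finite cycles.  Your argument hinges on producing $\tau_\alpha f$ with pairwise distinct cycle types, and you assert this can be done by ``choosing the $\alpha$-th auxiliary cycle of $f$ to have length tending to infinity (or simply all distinct).''  But this choice is not always available: if $f$ is, say, an infinite product of disjoint transpositions, every cycle has length~$2$, and your $\tau_\alpha f$ all have the same cycle type (one $4$-cycle and infinitely many $2$-cycles).  Worse, in that example the $\tau_\alpha f$ are genuinely $S_\infty$-conjugate to one another: if $f=(1,2)(3,4)(5,6)\cdots$ then $(1,3)f=(1,2,3,4)(5,6)(7,8)\cdots$ and $(1,5)f=(1,2,5,6)(3,4)(7,8)\cdots$ are conjugate via $z=(3,5)(4,6)\in S_\infty$.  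More generally, if $f$ has infinitely many cycles of \emph{every} length $\ge 2$, then merging any two finite cycles leaves the cycle type of $f$ unchanged, so the cycle-type invariant gives no information at all.

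The paper handles this subcase by an entirely different invariant: rather than tracking cycle lengths, it takes $\tau_k$ to be a product of $k$ transpositions, each supported inside a single cycle of $f$ (or, when almost all cycles are transpositions, equal to that transposition), and compares the \emph{fixed-point sets} $\fix(\tau_k f)$.  The point is that an $S_\infty$-conjugacy $\tau_j f=z\tau_k f z^{-1}$ forces $z$ to carry $\fix(\tau_j f)$ bijectively onto $\fix(\tau_k f)$ while moving only finitely many points; since $\fix(\tau_k f)=\fix(f)\cup\{k\text{ new points}\}$, this is impossible for $j\ne k$.  Your argument can be repaired along similar lines, but as written it does not cover the bounded-cycle-length case.
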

\begin{proof}  
Let $\theta=\iota_f\in\aut(S_\infty)$ where $f\in S_\omega$.   
We need to show that there exists pairwise distinct elements $\tau_j\in 
S_\infty, j\in \mathbb{N}$ such that $\tau_jf\ne z\tau_kfz^{-1}$ for any $z\in S_\infty$ if $j\ne k$.  
Since $S_\infty$ has infinitely many conjugacy classes, 
the assertion holds for $f\in S_\infty$, we need only consider the case $f\notin S_\infty$.  Thus, 
in the cycle decomposition of $f$, 
either (i) there exist an infinite cycle, or, (ii)  all the cycles are finite and there are infinitely many 
of them.  
 
Case (i). In this case the assertion has already been established in Lemma \ref{infcycle}.  

Case(ii). Suppose that $f=\prod_{\alpha\in \mathbb{N}}u(\alpha)$ where the 
$u(\alpha)$ are all finite cycles having length $\ell(\alpha)$ at least $2$ for every $\alpha\in \mathbb{N}$.   
Let $J:=\{\alpha\in \mathbb{N}\mid \ell(\alpha)\ge 3\}$. 
We break up the proof into two subcases depending on whether $J$ is infinite or not.

Subcase (a):  $J$ is infinite.   Let  $J_k\subset J$ be the set consisting of the first $k$ elements 
of $J$ (with respect to the usual ordering on $J\subset \mathbb{N}$). 
Write $u(\alpha)=(u(\alpha)_1,\ldots, u(\alpha)_{\ell(\alpha)})$ and set $U(\alpha):=\{u(\alpha)_i\mid 1\le i\le \ell(\alpha)\}, \alpha\in \mathbb{N}$.  
Consider the collection of pairwise disjoint transpositions $\lambda_\alpha=(u(\alpha)_1, u(\alpha)_2), \alpha\in J,$ and let $\tau_k=\prod_{\alpha\in J_k}\lambda_\alpha$. Note that $\lambda_\alpha u(\alpha)=(u(\alpha)_1)
.(u(\alpha)_2,\ldots, u(\alpha)_{\ell(\alpha)})=(u(\alpha)_2,\ldots, u(\alpha)_{\ell(\alpha)})$ fixes only $u(\alpha)_1$ in the set $U(\alpha)$ as $\ell(\alpha)\ge 3$.   
 Then $\tau_k.\prod_{\alpha\in J_k}u(\alpha)$ fixes only $u(\alpha)_1\in \mathbb{N}, \alpha\in J_k,$ in the set $\cup_{\alpha\in J_k}U(\alpha)$.    Let $F_0=\fix(f)$.  Then $\fix(\tau_k f)=F_0\cup \{u(\alpha)_1\mid \alpha\in J_k\}=:F_k$.  

Suppose that $\tau_jf=z\tau_k f z^{-1}$ with $z\in S_\infty$ with $j\ne k$.  Then $z$ defines a bijection   
$\zeta: F_j\to F_k$ between the fixed sets of $\tau_jf$ and $\tau_k f$.    
Clearly this is a contradiction if $\fix(f)=F_0$ is finite. 
Assume that $F_0\subset \mathbb{N}$ is infinite.  Since $z\in S_\infty$, it fixes all but finitely many 
elements of $F_0$.    Let $L:=\{m\in F_0\mid z(m)\ne m\}$.    Note that $\zeta$ restricts to the identity 
on $F_0\setminus L$.  Therefore $\zeta$ restricts to a 
bijection between $L\cup \{u(\beta)_1\mid \beta\in J_j\}$ and $L\cup \{u(\beta)_1\mid \beta\in J_k\}$.  Since $j\ne k$,  
$L$ is finite and $L\subset F_0$ is disjoint from  $\{u(\beta)_1\mid \beta\in J_n\}, n=j,k$, this 
is a contradiction.

Subcase (b):  The set $J$ is finite; we set $K=\mathbb{N}\setminus J$ and define $K_j, j\in \mathbb{N},$ to be the set of 
first $\alpha$ elements of $K$.  
Again we set $\lambda_\alpha=(u(\alpha)_1), u(\alpha)_2)=u(\alpha), \alpha\in K$.    Now, if $\alpha\in K$, we have $\lambda_\alpha u(\alpha)=id$, that is, $\lambda_\alpha u(j)$ fixes both points of $U(\alpha)$.  We set 
$\tau_j:=\prod_{\alpha \in K_j} \lambda_\alpha, F_j:=\fix(\tau_jf)=F_0\cup_{\alpha \in K_j}U(\alpha)$.   
Arguing exactly as above we see that for any $z\in S_\infty$, $\tau_jf=z\tau_kfz^{-1}$ implies $j=k$, completing the proof.
\end{proof}


\subsection{Houghton groups}
As in the introduction, $H_n, n\ge 2,$ denotes the Houghton group.   
We first describe the group of outer automorphisms of $H_n$.  Recall from \S1 that one has an exact sequence 
\[1\to S_\infty(M_n)\hookrightarrow H_n\stackrel{\tau}{\to} Z\to 1\]
where 
$\tau:H_n\to Z$ send $f\in H_n$ to the translation part $(t_1,\ldots, t_n)\in Z$ of $f$.   The group $S_\infty(M_n)$ 
is the commutator subgroup of $H_n$ if $n\ge 3$. When $n=2$, the commutator subgroup is the alternating group 
$A_\infty(M_2)$ which has index $2$ in $S_\infty(M_2)$.  In any  case, $S_\infty=S_\infty(M)$ is characteristic in $H_n$ 
as $H_n/S_\infty$ is the maximal {\it torsion-free} abelian quotient of $H_n$.

\begin{lemma}\label{outh}
Let $\phi:H_n\to H_n, n\ge 2,$ be an automorphism.  
Then $\phi$ is inner if and only if $\bar{\phi}:Z\to Z$ is the identity automorphism. 
\end{lemma}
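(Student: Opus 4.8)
The forward implication is immediate. If $\phi=\iota_g$ for some $g\in H_n$, then for every $h\in H_n$ we have $\phi(h)\,S_\infty(M_n)=ghg^{-1}S_\infty(M_n)=h\,S_\infty(M_n)$ because $H_n/S_\infty(M_n)\cong Z$ is abelian; hence $\bar\phi=\mathrm{id}$.

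For the converse, assume $\bar\phi=\mathrm{id}$. Since $S_\infty(M_n)$ is characteristic in $H_n$ and $H_n\subset S_\omega(M_n)$, Corollary \ref{autgen} (applied with $M_n$ in place of $\mathbb N$) provides a unique element $f$ of the normalizer of $H_n$ in $S_\omega(M_n)$ with $\phi=\iota_f|_{H_n}$; by that same uniqueness, $\phi$ is an inner automorphism of $H_n$ if and only if $f\in H_n$. Thus the whole matter reduces to proving $f\in H_n$, i.e.\ that $f$ is eventually a translation along each ray $L_j:=\{j\}\times\mathbb N$, $1\le j\le n$.

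The plan for this is to feed suitable infinite cycles of $H_n$ into the hypothesis on $\bar\phi$. Fix $j$ (indices mod $n$) and let $g_j\in H_n$ be the infinite cycle
\[\cdots\to(j{+}1,2)\to(j{+}1,1)\to(j,1)\to(j,2)\to(j,3)\to\cdots\]
(fixing every other point of $M_n$), whose translation part is $\tau(g_j)=e_j-e_{j+1}$, where $e_1,\dots,e_n$ is the standard basis of $\mathbb Z^n\supset Z$. As $f$ normalizes $H_n$, the conjugate $h:=fg_jf^{-1}$ lies in $H_n$; it is again a single infinite cycle, obtained by applying $f$ termwise to the cycle of $g_j$; and $\bar\phi=\mathrm{id}$ gives $\tau(h)=\tau(g_j)=e_j-e_{j+1}$. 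Since the $j$-th coordinate of $\tau(h)$ is $+1$, one of the two ends of the cycle of $h$ must, from some point on, run up $L_j$ in unit steps, and this is exactly the end onto which $f$ carries the positive half $(j,1),(j,2),(j,3),\dots$ of $g_j$'s cycle, namely the sequence $f(j,1),f(j,2),f(j,3),\dots$. Comparing the two descriptions of this end forces $f(j,s)=(j,s+c_j)$ for all sufficiently large $s$, for some $c_j\in\mathbb Z$. Doing this for each $j=1,\dots,n$ shows $f\in H_n$, so $\phi=\iota_f$ is inner.

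The step that requires care is reading off, from the translation part $e_j-e_{j+1}$, the asymptotic shape of the infinite cycle $h$ — in particular that exactly one of its two ends is eventually a unit-step run along $L_j$ — and then correctly identifying that end with $\{f(j,s)\mid s\ge 1\}$ in its natural order by tracking the cycle of $g_j$ through the conjugation. The hypothesis $\bar\phi=\mathrm{id}$ enters only through the equalities $\tau(fg_jf^{-1})=e_j-e_{j+1}$, and it is precisely these that rule out the competing possibility that $f$ asymptotically interchanges the rays $L_j$ and $L_{j+1}$. The remaining points (that $g_j$ is as described, that conjugation permutes cycles, and that $\tau(fhf^{-1})$ depends only on $\bar\phi$) are routine.
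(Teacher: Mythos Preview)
Your proof is correct and follows essentially the same route as the paper's: invoke Corollary~\ref{autgen} to write $\phi=\iota_f$ for a unique $f\in S_\omega(M_n)$, then feed specific infinite cycles of $H_n$ into the hypothesis $\bar\phi=\mathrm{id}$ to force $f$ to be eventually a translation on each ray. The only cosmetic differences are that the paper uses cycles $h_p$ joining ray $p$ to ray $n$ (for $1\le p<n$) rather than consecutive rays $j,j{+}1$, and that the paper packages the ray-permutation data into a map $\pi_f\in S_n$ (with $\pi_f=\mathrm{id}\Leftrightarrow f\in H_n$) which it reuses in the proof of Proposition~\ref{outsym}.
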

\begin{proof}
It is trivial to see that any inner automorphism of $H_n$ induces the identity automorphism of $Z$.  
For the converse, 
suppose that $\phi:H_n\to H_n$ induces the identity automorphism of $Z$.  

Let $f\in S(M_n)$ be such that $\iota_f(H_n)=H_n$.

Consider the element $h_{p}:M_n\to M_n,1\le  p<n,$ in $H_n$ defined follows: 
\footnote{The element $(p,k)\in M_n$ should not be confused with the transposition in $S(\mathbb{N})$.}
\[h_p(i, k)=\left \{ 
\begin{array}{cc} (p,k+1),& \text{if}~ i=p, k\ge 1\\
       (n,k-1) ,& \textrm{if}~ i=n, k>1, \\ 
       (p,1), &\textrm{if} ~i=n,k=1\\
       (i,k), & \textrm{ if} ~ i\ne p,n.\\
       \end{array}
       \right. 
       \]
Thus  $h_p$ permutes $\{p,n\}\times \mathbb{N}$ in a single cycle:
\[
h_p=(\ldots, (n,2), (n,1), (p,1), (p,2), \ldots, (p,k),\ldots)\]
and so $fh_pf^{-1}$ is the cycle \[fh_pf^{-1}
=(\ldots, f(n,2), f(n,1), f(p,1), f(p,2), \ldots, f(p,k), \ldots) \in H_n.\]   
The only infinite cycles in $H_n$ are those whose terms, except for a finite part of the cycle, 
are {\it consecutive} numbers along two rays, say $\{i_n\}\times \mathbb{N}$ 
 and $\{i_p\}\times \mathbb{N}$, in the negative and positive directions respectively of the cycle $fh_pf^{-1}$. 
 Therefore we have  
$\tau(fh_pf^{-1})=e_{i_p}-e_{i_n}.$   Moreover, there exist integers $t_n, t_p$ such that 
$f(n,k)=(i_n, k+t_n), f(p, k)=(i_p, k+t_p)$ for sufficiently large $k$.   Clearly $i_n$ and $t_n$ are independent of $p$. 
Since $f$ is a bijection, the 
association 
$p\mapsto i_p$ is a permutation $\pi_f\in S_n$, and 
$\sum_{1\le q\le n} t_q=0$.   Note that $\pi_f=id$ if and only if $f\in H_n$.

Since $S_\infty$ is characteristic in $H_n$, 
by Corollary \ref{autgen}, $\phi=\iota_g$ for a unique $g\in S(M_n)$.   
{\it We claim that $g\in H_n$.}  
Since $\tau(ghg^{-1})=\tau(\phi(h)) =\tau(h)~\forall h\in H_n$, we have $\pi_g(q)=q$ for all $q\le n$ 
and so we have $g\in H_n$.    
\end{proof}

The group $S_n$ acts on the set $M_n=\{1,\ldots, n\}\times \mathbb{N}$ in the obvious manner, by acting via the identity 
on $\mathbb{N}$. This defines an action $\psi$ of $S_n$ on the group 
$S(M_n)$ defined as $f\mapsto \sigma \circ f\circ \sigma^{-1}$ which preserves the subgroup $H_n$.  Thus we obtain 
a homomorphism $\psi:S_n\to \aut(H_n)$.  It is readily seen that $\tau(\psi_\sigma(h))=\sigma(\tau(h)) ~\forall h\in H_n,$ where 
$\sigma$ acts on $Z\subset \mathbb{Z}^n$ by permuting the standard basis elements $e_1,\ldots, e_n$.  In particular 
$\psi$ is a monomorphism.  Let $\bar{\psi}:S_n \to \out(H_n)$ be the composition of $\psi$ with the projection 
$\aut(H_n)\to \out(H_n)$.              

\begin{proposition} \label{outsym}
The homomorphism $\bar{\psi}: S_n \to \out(H_n)$ is an isomorphism and so $\aut(H_n)=Inn(H_n)\rtimes S_n
\cong H_n\rtimes S_n$.
\end{proposition}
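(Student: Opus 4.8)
The plan is to verify that $\bar\psi$ is both injective and surjective, and then to identify the resulting extension as a semidirect product. Injectivity of $\bar\psi$ is essentially forced by Lemma~\ref{outh}: if $\bar\psi_\sigma$ is trivial in $\out(H_n)$, then $\psi_\sigma$ is an inner automorphism of $H_n$, hence induces the identity on $Z$ by the easy direction of Lemma~\ref{outh}; but we computed $\tau(\psi_\sigma(h)) = \sigma(\tau(h))$, so $\sigma$ acts trivially on $Z\cong\mathbb{Z}^{n-1}$, and since the $S_n$-action on $Z\subset\mathbb{Z}^n$ by permuting coordinates is faithful (the only permutation fixing $e_i-e_j$ for all $i\ne j$ is the identity), we get $\sigma = \mathrm{id}$. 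So $\bar\psi$ is a monomorphism.

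For surjectivity, let $\phi\in\aut(H_n)$ be arbitrary. By Corollary~\ref{autgen} (applicable since $S_\infty(M_n)$ is characteristic in $H_n$), $\phi = \iota_g$ for a unique $g\in S(M_n)$; and from the analysis in the proof of Lemma~\ref{outh}, $g$ determines a permutation $\pi_g\in S_n$ via $g(q,k) = (\pi_g(q),\, k+t_q)$ for all sufficiently large $k$. The key point is that $\bar\phi:Z\to Z$ is precisely the permutation action of $\pi_g$, i.e. $\tau(\phi(h)) = \pi_g(\tau(h))$ for all $h$ — this is read off from the formula $\tau(fh_pf^{-1}) = e_{i_p}-e_{i_n}$ established there. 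Now choose $\sigma := \pi_g\in S_n$; then $\psi_\sigma^{-1}\circ\phi$ induces the identity on $Z$, hence is inner by Lemma~\ref{outh}. Therefore $\phi$ and $\psi_\sigma$ have the same image in $\out(H_n)$, proving $\bar\psi$ surjective. Combining, $\bar\psi: S_n\xrightarrow{\sim}\out(H_n)$.

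Finally, for the splitting: the composite $S_n\xrightarrow{\psi}\aut(H_n)\to\out(H_n)$ is an isomorphism by the above, so $\psi$ is a section of the projection $\aut(H_n)\to\out(H_n)$ over the whole of $\out(H_n)$, whence $\aut(H_n) = Inn(H_n)\rtimes\psi(S_n)\cong Inn(H_n)\rtimes S_n$. Since $H_n$ has trivial center (its center would lie in $S_\infty(M_n)$, being characteristic, and is visibly trivial there), $Inn(H_n)\cong H_n$, giving $\aut(H_n)\cong H_n\rtimes S_n$. The main obstacle is bookkeeping rather than conceptual: one must carefully extract from the proof of Lemma~\ref{outh} the precise statement that the permutation $\pi_g$ attached to $g$ realizes $\bar\phi$ on $Z$, which amounts to noting that $\tau$ is $S_n$-equivariant and that the elements $h_p$ (together with $S_\infty$) generate $H_n$, so their images under $\phi$ pin down $\bar\phi$ completely. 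Everything else is formal nonsense about extensions of groups by their center-free normal subgroups.
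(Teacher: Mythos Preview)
Your proof is correct and follows essentially the same route as the paper: injectivity via Lemma~\ref{outh} together with faithfulness of the $S_n$-action on $Z$, surjectivity by writing an arbitrary $\phi=\iota_g$, extracting the permutation $\pi_g$ from the proof of Lemma~\ref{outh}, and showing $\psi_{\pi_g}^{-1}\phi$ induces the identity on $Z$ so is inner; the splitting and identification $Inn(H_n)\cong H_n$ are then formal. One small quibble: your parenthetical justification that the center ``would lie in $S_\infty(M_n)$, being characteristic'' is not a valid argument (characteristic subgroups need not contain the center); the clean reason is that any central element of $H_n$ centralizes $S_\infty(M_n)$, and the centralizer of $S_\infty(M_n)$ in $S(M_n)$ is trivial (this is the injectivity of $\iota$ in Lemma~\ref{autsym}).
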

\begin{proof}
Lemma \ref{outh} shows that $\bar{\psi}$ is a monomorphism.  We shall show that it is surjective.  

Let $\phi\in\aut(H_n)$.  Write $\phi=\iota_f$ for a (unique) $f\in S(M_n)$.  With notations as in the proof of 
Lemma \ref{outh}, 
let $\pi:=\pi_f\in S_n$.

Consider the automorphism $\psi_\pi^{-1}\phi=:\theta$.
We have $\tau(\theta(h_p))=\pi^{-1}(\tau(\phi(h_p)))=\pi^{-1}(\tau(fh_pf^{-1}))=\pi^{-1}(e_{\pi(p)}-e_{\pi(n)}) 
=e_p-e_n=\tau(h_p), 1\le p<n$.     
Since the group $Z$ is generated by $\tau(h_p), 1\le p<n,$  
it follows by Lemma \ref{outh} that $\theta$ is inner.   Hence $\bar{\psi}(\pi)=\phi \mod Inn(H_n).$

Finally note that $Inn(H_n)\cong H_n$ since the centre of $H_n$ is trivial. 
\end{proof}

The above description of $\aut(H_n)$ had been obtained by Burillo, Cleary, Martino, and R\"over \cite{bcmr} 
and also by Cox \cite{cox}.    
Our 
proof is essentially based on Corollary \ref{autgen}, which is applicable in a more general context.

\begin{theorem}\label{houghton}
The Houghton group $H_n$ has the $R_\infty$-property for any $n\ge 2$.
\end{theorem}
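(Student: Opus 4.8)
The plan is to use the structure theory of $\aut(H_n)$ established in Proposition~\ref{outsym}, together with the twisted-conjugacy machinery from \S2, to reduce the problem to a counting statement about cycle types. By Proposition~\ref{outsym} every automorphism $\phi$ of $H_n$ is of the form $\iota_f$ for a unique $f\in S(M_n)$, and by the same proposition we may write $\phi = \psi_\pi \circ (\text{inner})$ for a unique $\pi\in S_n$. Since $H_n$ has infinitely many conjugacy classes (it contains $S_\infty(M_n)$, whose conjugacy classes in $H_n$ are still indexed by infinitely many cycle types — any two finitary permutations with the same cycle type that is realizable are $H_n$-conjugate, and there are infinitely many such types), the discussion in \S\ref{periodic} lets us replace $\phi$ by $\iota_g\circ\phi$ freely; hence it suffices to prove $R(\phi)=\infty$ for one representative of each class in $\out(H_n)\cong S_n$, i.e. for $\phi=\psi_\pi$, $\pi\in S_n$.

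First I would split on whether $\pi=\mathrm{id}$. If $\pi=\mathrm{id}$, then $\psi_\pi$ is itself inner (indeed trivial in $\out$), so $\phi$ is conjugate to the identity automorphism, and $R(\phi)=R(\mathrm{id})=\#\{\text{conjugacy classes of }H_n\}=\infty$. If $\pi\ne\mathrm{id}$, I would apply the Addition Formula, Lemma~\ref{additionformula}(ii), to the characteristic exact sequence $1\to S_\infty(M_n)\to H_n\stackrel{\tau}{\to}Z\to 1$ with $\theta=\psi_\pi$, $\theta'=\psi_\pi|_{S_\infty(M_n)}$, and $\bar\theta$ the permutation action of $\pi$ on $Z\cong\mathbb{Z}^{n-1}$. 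Here the hope is that Remark~\ref{minusid} applies: $\bar\theta$ is a permutation matrix, and one only needs that $1$ is not an eigenvalue of $\pi$ acting on $Z$. This fails exactly when $\pi$ has a fixed point on $\{1,\dots,n\}$ (then $e_i-e_j$ for two fixed indices, or more precisely the $\pi$-invariants in $Z$, are nonzero). So the clean case is $\pi$ an $n$-cycle (or more generally fixed-point-free with no eigenvalue $1$ on $Z$ — but $n$-cycles are the essential case, cf. the Lemma on $\Sigma^c(H_n)$); there Remark~\ref{minusid} reduces everything to showing $R(\psi_\pi|_{S_\infty(M_n)})=\infty$ on the \emph{finitary} subgroup, and since $\psi_\pi|_{S_\infty(M_n)}=\iota_\sigma$ for the honest permutation $\sigma\in S(M_n)$ induced by $\pi$, this is a statement about $\aut(S_\infty)$ — an element $\sigma\in S_\omega(M_n)$ with infinite cycles (a nontrivial $\pi$-cycle of $\{1,\dots,n\}$ of length $r$ gives, via $\psi_\pi$, a permutation $\sigma$ acting on $r$ rays, hence containing infinitely many infinite cycles), so Lemma~\ref{infcycle}/Theorem~\ref{symmetricgroup} gives $R(\iota_\sigma)=\infty$ directly.

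The remaining case is $\pi\ne\mathrm{id}$ but with a fixed point, where $\bar\theta$ has eigenvalue $1$ and Remark~\ref{minusid} does not apply bare. Here I would still use Lemma~\ref{additionformula}, but combine it with Lemma~\ref{finiteorder} / Remark~\ref{torsionfix}: after multiplying $\psi_\pi$ by an inner automorphism we may arrange $\psi_\pi$ itself has finite order (it does — $\psi$ is a homomorphism from the finite group $S_n$), say order $m=\mathrm{ord}(\pi)$, with $\psi_\pi^m=\mathrm{id}$. Then $\fix(\psi_\pi)$ is the subgroup of $H_n$ of $\pi$-invariant bijections, which certainly contains a copy of $H_{\,|\text{non-fixed part}|}$ together with full control over the rays fixed by $\pi$; in particular $\fix(\psi_\pi)$ contains finitary permutations of arbitrarily large finite order $k$ (e.g. a single $k$-cycle supported on a single $\pi$-fixed ray). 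By Remark~\ref{torsionfix}, since $\psi_\pi^m=\mathrm{id}$, the presence of such elements of unbounded order in $\fix(\psi_\pi)$ already forces $R(\psi_\pi)=\infty$.

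\textbf{Main obstacle.} The step I expect to be most delicate is the bookkeeping in the fixed-point case: verifying precisely that $\fix(\psi_\pi)\subset H_n$ really contains elements of arbitrarily large finite order and that these land in distinct $\psi_\pi^m$-twisted (here ordinary, since $\psi_\pi^m=\mathrm{id}$) conjugacy classes of $H_n$ — one must be sure the relevant $k$-cycles, supported on a $\pi$-fixed ray, are genuinely non-conjugate in $H_n$ for distinct $k$, which holds because cycle type is a conjugacy invariant and a $k$-cycle is finitary. A secondary subtlety is handling $n=2$, where $\pi$ is forced to be the transposition (fixed-point-free, an $n$-cycle), so only the $\Sigma$-theory-free infinite-cycle argument is needed; and making sure the reduction "replace $\phi$ by a nice representative mod $Inn(H_n)$" is legitimate — this is exactly the content of the opening paragraph of \S\ref{periodic}, valid because $H_n$ has infinitely many conjugacy classes.
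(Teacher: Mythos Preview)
Your strategy is sound and closely tracks the paper's two proofs, but there is a gap in the case analysis. You split $\pi\in S_n$ into (a) $\pi=\mathrm{id}$, (b) $\pi$ an $n$-cycle, and (c) $\pi\ne\mathrm{id}$ with a fixed point. For $n\ge 4$ this trichotomy is not exhaustive: a permutation such as $\pi=(1\,2)(3\,4)$ is fixed-point-free yet not an $n$-cycle. In that situation your case (c) construction --- a $k$-cycle supported on a single $\pi$-fixed ray --- is unavailable, and your case (b) argument does not apply either since $\bar\theta$ \emph{does} have eigenvalue $1$ on $Z$ (indeed $\pi$ has eigenvalue $1$ on $Z$ precisely when $\pi$ has more than one cycle, equivalently is not an $n$-cycle).

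The fix is easy and can go either way. One option: when $\pi$ is not an $n$-cycle, $\bar\theta$ has a nonzero fixed vector in $Z\cong\mathbb{Z}^{n-1}$, hence $R(\bar\theta)=\infty$, and surjectivity in Lemma~\ref{additionformula}(i) gives $R(\psi_\pi)=\infty$ immediately --- no need for the torsion-fixed-point argument at all. This is precisely the paper's \emph{second} proof, which avoids any case split on $\pi$: either $R(\bar\theta)=\infty$ and you are done, or $R(\bar\theta)<\infty$, forcing $\fix(\bar\theta)=0$, and then Remark~\ref{minusid} together with Theorem~\ref{symmetricgroup} finish. The other option: replace your $k$-cycle on a fixed ray by the ``diagonal'' element $\xi_k=\prod_{i=1}^n((i,1),\ldots,(i,k))$, which lies in $\fix(\psi_\pi)$ for \emph{every} $\pi\in S_n$ and has order $k$. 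This is the paper's \emph{first} proof, and it handles all $\pi$ uniformly via Remark~\ref{torsionfix} with no case distinction whatsoever. Either route is shorter than your hybrid argument.
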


We shall give two proofs.  The first one uses the structure of the 
automorphism group of $H_n$ and is more direct. The second one uses the result of 
Theorem \ref{symmetricgroup} and the addition formula (Lemma \ref{additionformula}).

{\it First proof.}   
First observe that there are infinitely many conjugacy classes in $H_n$ since two elements 
in $S_\infty =S_\infty(M_n)\subset H_n$ are conjugates in $H_n$ only if they have the same cycle type.  It follows that 
$R(\phi)=\infty$ for any inner automorphism $\phi$ of $H_n$.    Therefore, to show that $R(\phi)=\infty$ for an 
{\it arbitrary} $\phi\in \aut(H_n)$,  it suffices 
to show that $R(\phi)=\infty$ for all $\phi$ in a set of coset representatives of elements of $\out(H_n)$.  
Thus we need only show that $R(\psi_\sigma)=\infty$ for any $\sigma\in S_n$, where $\psi:S_n\to \aut(H_n)$ 
is as defined in the paragraph above Proposition \ref{outsym}.  We shall use Lemma \ref{finiteorder} and Remark \ref{torsionfix} to achieve this.

For $k\ge 1$, consider the element $\xi_k$ defined as the product of $k$-cycles $((i, 1),\ldots, (i,k))\in H_n$, $1\le i\le n$.  Explicitly, 
  \[\xi_k(i, j)=\left\{ \begin{array}{cc} 
                             (i,j+1) & \textrm{if} ~ 1\le j<k,\\
                             (i,1)& \textrm{if}~ j=k,\\
                             (i,j)& \textrm{if} ~j>k,
                             \end{array}
                             \right.
                             \]                             
for all $i\le n$.  Then $\xi_k$ is fixed by $\psi_\sigma $ for every $\sigma\in S_n$.  Thus, $\{\xi_{k}^n\mid k\ge 1\}$ 
contains elements of arbitrarily large orders and so by Remark \ref{torsionfix} it follows that $R(\psi_\sigma)=\infty$ 
for all $\sigma\in S_n$, completing the proof.

{\it Second proof.}    Consider the exact sequence $1\to S_\infty(M_n)\to H_n \to Z\to 0$.  
As remarked already, $S_\infty(M_n)$ is characteristic in $H_n$ and we have $Z\cong\mathbb{Z}^{n-1}$. 
Thus any automorphism $\theta$ of $H_n$ restricts to an automorphism $\theta'$ of $S_\infty(M_n)$ and 
induces an automorphism $\bar{\theta}$ of $Z$.  If $R(\theta')=\infty$ then, by
Lemma \ref{additionformula} (i), we have $R(\theta)=R(\bar{\theta})=\infty$.  
Now suppose that $R(\bar{\theta})<\infty$. Then $Fix(\bar{\theta})=0$.   Since $Z$ is abelian and 
since $R(\theta')=\infty$ by Theorem \ref{symmetricgroup}, the addition formula  (Lemma \ref{additionformula}(ii)) yields $R(\theta)=R(\theta')=\infty$, completing the proof.  \hfill $\Box$

\subsection{The group of pure symmetric automorphisms} \label{gpsa}
Recall that $G_n\subset \aut(F_n), n\ge 2,$ denotes the group of pure symmetric automorphisms 
of the free group $F_n$ of rank $n$.  A presentation for $G_n$, obtained by McCool, was recalled 
in \S1.  It is immediate from this presentation that the abelianization $G_n^\ab=G_n/[G_n,G_n]$ is isomorphic 
to $\mathbb{Z}^{n^2-n}$ with basis the images $\bar{\alpha}_{ij}, 1\le i\ne j\le n$.  We denote by 
$\{\chi_{ij}\mid 1\le i\ne j\le n\}$ the basis of $\hom(G_n^\ab,\mathbb{Z})$ dual to the basis $\{\bar{\alpha}_{ij}\mid 1\le i\ne j\le n\}$. 
We shall denote by the same symbol $\chi_{ij}$ the composition $G_n\to G_n^\ab\stackrel{\chi_{ij}}{\to }\mathbb{Z}\hookrightarrow \mathbb{R}$.   
We will assume that $n\ge 3$, leaving out $G_2$ which is isomorphic to a free group of rank $2$, which is known to have the $R_\infty$-property. 

We begin by recalling the explicit description 
of $\Sigma^c(G_n)$ due to Orlandi-Korner \cite{o}.  

Let $A_{ij}:=\mathbb{R}\chi_{ij}+\mathbb{R}\chi_{ji}, 
B_{ijk}:=\mathbb{R}(\chi_{ij}-\chi_{kj})+\mathbb{R}(\chi_{jk}-\chi_{ik})+\mathbb{R}(\chi_{ki}-\chi_{ji})$, $i,j,k$ pairwise  
distinct.    Note that $A_{ij}=A_{ji}$ and $B_{ijk}=B_{pqr}$ if $\{i,j,k\}=\{p,q,r\}$.  
Let $S$ be union of vector subspaces $S=\bigcup A_{pq}\cup \bigcup B_{ijk}\subset \hom(G_n,\mathbb{R})$ where the unions are over all pairs of distinct number $p,q\le n$ and 
all pairwise distinct numbers $i,j,k\le n$. It was shown by Orlandi-Korner \cite{o} that $\Sigma^c(G_n)$ is the image of $S\setminus\{0\}\subset \hom(G_n,\mathbb{R})\setminus \{0\}$.  

Let $\mathcal{S}_n$ denote the semidirect product $C_2^n\rtimes S_n$ where $S_n$ acts 
on $C_2^n$ by permuting the coordinates.  Here $C_2=\{1,-1\}$.
The group $\mathcal{S}_n$ acts effectively on $F_n$ the free group with basis $\{x_1,\ldots, x_n\}$ where 
$\pi\in S_n$ permutes the generators: $\pi(x_j)=x_{\pi(j)}, 1\le j\le n$ and the action of the $k$-th factor 
of $C_2^n$ is given by the automorphism $t_k(x_k)=x_k^{-1}, t_k(x_j)=x_j, j\ne k.$
Thus $\mathcal{S}_n$ is a subgroup of $\aut(F_n)$.  It is readily verified that $\mathcal{S}_n$ normalizes 
$G_n$:  $t_k\alpha_{i,j}t_k^{-1}= \alpha_{i,j}^{-1}$ if $k=j$ and equals $\alpha_{i,j}$ otherwise; 
if $\pi\in S_n$, then $\pi\alpha_{i,j}\pi^{-1}=\alpha_{\pi(i),\pi(j)}$ for all $i,j$.   
In particular $\pi^*(A_{ij})=A_{\pi(i)\pi(j)}, \pi^*(B_{ijk})=B_{\pi(i)\pi(j)\pi(k)}$ for all $\pi\in S_n$.
Thus we have the 
following

\begin{lemma} \label{signsym}
Let $n\ge 3$.
The action of the group $\mathcal{S}_n\subset \aut(G_n)$ on $\hom(G_n,\mathbb{R})$ and on $\Sigma^c(G_n)$ 
is defined by $ \pi^*(\chi_{i,j})=\chi_{\pi(i),\pi(j)}, t^*(\chi_{i,j})=t_it_j\chi_{i,j},$  
for all $\pi\in S_n, t=(t_1,\ldots, t_n)\in C_2^n$.   \hfill $\Box$
\end{lemma}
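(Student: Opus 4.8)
The plan is to obtain the statement by transposing the conjugation action already computed on generators. Since $\mathbb{R}$ is abelian, $\hom(G_n,\mathbb{R})=\hom(G_n^{\ab},\mathbb{R})$, so the action of any $\sigma\in\mathcal{S}_n\subset\aut(G_n)$ on $\hom(G_n,\mathbb{R})$ is the transpose of the automorphism $\sigma$ induces on $G_n^{\ab}\cong\mathbb{Z}^{n^2-n}$ in the basis $\{\bar\alpha_{ij}\}_{i\ne j}$. So the first step is just to read the conjugation formulas recorded above the lemma on the abelianization: since $\pi\alpha_{ij}\pi^{-1}=\alpha_{\pi(i),\pi(j)}$, the element $\pi$ acts on $G_n^{\ab}$ as the basis permutation $\bar\alpha_{ij}\mapsto\bar\alpha_{\pi(i),\pi(j)}$; since $t_k$ conjugates $\alpha_{ij}$ to its inverse exactly when $k=j$ and fixes it otherwise, a general $t=(t_1,\dots,t_n)\in C_2^n$ acts on $G_n^{\ab}$ by a diagonal $\pm1$ matrix, scaling $\bar\alpha_{ij}$ by a sign determined by $t_i$ and $t_j$.

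Both maps on $G_n^{\ab}$ are monomial---a coordinate permutation for $\pi$, a diagonal sign matrix for $t$---so their transposes on $\hom(G_n,\mathbb{R})$ in the dual basis $\{\chi_{ij}\}$ are read off at once and are precisely the formulas in the statement. The only delicacy is the action convention: one must fix whether $\aut(G_n)$ acts on characters by $\chi\mapsto\chi\circ\sigma$ or by $\chi\mapsto\chi\circ\sigma^{-1}$, choosing it so that the composite $\mathcal{S}_n\to\aut(G_n)\to\homeo(\hom(G_n,\mathbb{R}))$ is a left action; this dictates whether the permutation appearing in $\pi^*(\chi_{ij})$ is $\pi$ or $\pi^{-1}$, and the statement records the chosen normalization.

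For the remaining assertion---that this action restricts to $\Sigma^c(G_n)$---there is in principle nothing to prove, since $\Sigma^c(G_n)\subset S(G_n)$ is a canonical subset attached to the abstract group $G_n$ and is therefore preserved by every automorphism of $G_n$, in particular by $\mathcal{S}_n$. If an explicit verification is wanted, one invokes Orlandi-Korner's description \cite{o} of $\Sigma^c(G_n)$ as the projectivization of $S=\bigcup A_{pq}\cup\bigcup B_{ijk}$ and checks that the monomial maps above permute this finite family of subspaces: for $\pi$ this is exactly $\pi^*(A_{ij})=A_{\pi(i)\pi(j)}$ and $\pi^*(B_{ijk})=B_{\pi(i)\pi(j)\pi(k)}$, already recorded, and for $t$ it is a routine check on the explicit signs that each $A_{pq}$ and each $B_{ijk}$ is mapped onto itself. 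I do not expect any genuine obstacle: the lemma is pure bookkeeping once the conjugation formulas on generators are in hand, the only point to watch being the contravariance and left/right convention in passing from $G_n^{\ab}$ to $\hom(G_n,\mathbb{R})$, all within the standing hypothesis $n\ge3$ under which Orlandi-Korner's description of $\Sigma^c(G_n)$ is available.
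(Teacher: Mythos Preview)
The paper gives no proof of this lemma: it ends with~$\Box$, the authors treating it as immediate from the conjugation formulas $t_k\alpha_{ij}t_k^{-1}=\alpha_{ij}^{\pm1}$ and $\pi\alpha_{ij}\pi^{-1}=\alpha_{\pi(i),\pi(j)}$ recorded just before. Your proposal spells out precisely this routine passage to the abelianization and its dual, so it is the same approach.

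One small point worth tightening: your own premise---that $t_k$ inverts $\alpha_{ij}$ exactly when $k=j$ and fixes it otherwise---gives $t\cdot\bar\alpha_{ij}=t_j\bar\alpha_{ij}$ in $G_n^{\ab}$, hence $t^*(\chi_{ij})=t_j\chi_{ij}$, depending only on $t_j$. The factor $t_it_j$ in the lemma's displayed formula therefore appears to be a slip in the paper rather than something your argument needs to reproduce; in any case it does not affect the use made of the lemma, since only the monomial form of the action matters downstream.
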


The following proposition is refinement of a statement of Gon\c{c}alves and Kochloukova in proof of 
\cite[Theorem 4.11]{gk}). 

\begin{proposition} \label{gsym}
There exists a homomorphism $\eta:\aut(G_n) \to S_n$ which is surjective such that $\phi^*(\chi_{i,j})=\epsilon_{i,j}
\chi_{\sigma(i),\sigma(j)}, 1\le i\ne j\le n,$ where $\epsilon_{i,j}\in \{1,-1\}$ and $\sigma=\eta(\phi)\in S_n$. 
In particular, $\aut(G_n)\cong K\rtimes S_n$ where $K=\ker(\eta)$.
\end{proposition}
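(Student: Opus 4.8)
The plan is to leverage the description of $\Sigma^c(G_n)$ due to Orlandi--Korner (as restated in \S\ref{gpsa}) together with the action of $\aut(G_n)$ on the character sphere, exactly as in the proof of \cite[Theorem 4.11]{gk}, but tracked at the level of $\hom(G_n,\mathbb{R})$ rather than just $S(G_n)$. First I would recall that $\aut(G_n)$ acts on $\hom(G_n,\mathbb{R})\cong\mathbb{R}^{n^2-n}$ through the action on the abelianization $G_n^{\ab}\cong\mathbb{Z}^{n^2-n}$, and that this action permutes the finite set of subspaces $\{A_{pq}\}\cup\{B_{ijk}\}$ whose union is $S$, since $S\setminus\{0\}$ maps onto the canonically defined invariant subset $\Sigma^c(G_n)$. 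The key structural observation is that the lines $A_{pq}$ are precisely the $\binom{n}{2}$ two-dimensional pieces of $S$ of minimal dimension that are \emph{not} contained in any $B_{ijk}$, or more robustly, one identifies the $A_{pq}$ intrinsically (e.g.\ as those $2$-dimensional subspaces $V\subset S$ such that $V$ lies in exactly one maximal subspace of $S$, or by a dimension/incidence count distinguishing the $A$'s from the $B$'s). Hence any $\phi^*$ permutes the set $\{A_{pq}\mid 1\le p<q\le n\}$, and this permutation of the $\binom{n}{2}$ unordered pairs is induced by a unique permutation $\sigma\in S_n$ of $\{1,\dots,n\}$ (using $n\ge 3$, so that the action of $S_n$ on $2$-subsets is faithful and the permutations of $2$-subsets arising from $S_n$ are characterized among all permutations of pairs). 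This defines the map $\eta:\aut(G_n)\to S_n$, $\phi\mapsto\sigma$; it is a homomorphism because $(\phi\circ\phi')^*=\phi'^*\circ\phi^*$ and the assignment of $\sigma$ to a permutation of the $A_{pq}$'s is a homomorphism.

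Next I would pin down how $\phi^*$ acts on the individual basis characters $\chi_{ij}$. Since $\phi^*(A_{pq})=A_{\sigma(p)\sigma(q)}$ and $A_{pq}=\mathbb{R}\chi_{pq}+\mathbb{R}\chi_{qp}$, we already get $\phi^*(\chi_{ij})\in A_{\sigma(i)\sigma(j)}$, i.e.\ $\phi^*(\chi_{ij})=a\,\chi_{\sigma(i)\sigma(j)}+b\,\chi_{\sigma(j)\sigma(i)}$ for some reals $a,b$. To eliminate the off-diagonal term $b$ and force $a\in\{1,-1\}$, I would invoke the integrality constraint: $\phi^*$ preserves the lattice $\hom(G_n^{\ab},\mathbb{Z})$ and is invertible over $\mathbb{Z}$, and it must also respect the $B_{ijk}$ subspaces. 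Intersecting the image of a suitable $B_{ijk}$ with $A_{\sigma(i)\sigma(j)}$, or comparing $\phi^*(A_{ij})\cap\phi^*(B_{ijk})$ against $A_{\sigma(i)\sigma(j)}\cap B_{\sigma(i)\sigma(j)\sigma(k)}$ (a single line spanned by $\chi_{\sigma(i)\sigma(j)}-\chi_{\sigma(k)\sigma(j)}$, hence forcing the coefficient structure), should show $\phi^*(\chi_{ij})=\epsilon_{ij}\chi_{\sigma(i)\sigma(j)}$ with $\epsilon_{ij}\in\{1,-1\}$; the $\mathbb{Z}$-invertibility then rules out other integer coefficients. This is exactly the conclusion asserted in the proposition, with $\sigma=\eta(\phi)$ and $\epsilon_{ij}$ the recorded signs.

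For surjectivity of $\eta$, I would simply exhibit the splitting: by Lemma \ref{signsym} the subgroup $\mathcal{S}_n=C_2^n\rtimes S_n\subset\aut(G_n)$ acts on $\hom(G_n,\mathbb{R})$ by $\pi^*(\chi_{ij})=\chi_{\pi(i)\pi(j)}$ and sign changes, so for $\pi\in S_n$ we get $\eta(\pi)=\pi$; in particular $\eta|_{S_n}=\mathrm{id}$, so $\eta$ is surjective and $S_n\hookrightarrow\aut(G_n)$ is a section of $\eta$. Finally, writing $K=\ker(\eta)$, the existence of this section gives $\aut(G_n)=K\rtimes S_n$ directly, since $K\cap S_n=\{1\}$ (an element of $S_n\subset\mathcal{S}_n$ in $K$ fixes every $A_{pq}$ setwise, forcing $\sigma=\mathrm{id}$, hence the permutation is trivial) and $K\cdot S_n=\aut(G_n)$ (every $\phi$ satisfies $\phi\cdot\eta(\phi)^{-1}\in K$). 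I expect the main obstacle to be the second paragraph: giving a clean, basis-free characterization of the subspaces $A_{pq}$ inside $S$ that is manifestly preserved by all of $\aut(G_n)$, and then squeezing out the precise monomial-with-sign form of $\phi^*$ on each $\chi_{ij}$ from the incidence pattern of the $A$'s and $B$'s together with $\mathbb{Z}$-invertibility; the homomorphism property and the semidirect-product decomposition are then formal.
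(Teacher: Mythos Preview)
Your overall strategy matches the paper's: both use that $\phi^*$ permutes the collections $\mathcal{A}=\{A_{ij}\}$ and $\mathcal{B}=\{B_{ijk}\}$, exploit incidence relations between them to extract a permutation $\sigma\in S_n$ and a monomial form for $\phi^*$, invoke $\mathbb{Z}$-lattice preservation for the signs $\epsilon_{ij}=\pm1$, and split via Lemma~\ref{signsym}. So the architecture is right.

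However, your concrete suggestion in the second paragraph does not work as stated. The intersection $A_{ij}\cap B_{ijk}$ is \emph{zero}, not a line: $A_{ij}$ is spanned by $\chi_{ij},\chi_{ji}$, while every generator of $B_{ijk}$ involves some $\chi_{pq}$ with $\{p,q\}\ne\{i,j\}$; and the vector $\chi_{\sigma(i)\sigma(j)}-\chi_{\sigma(k)\sigma(j)}$ you name is not in $A_{\sigma(i)\sigma(j)}$ at all. The device the paper actually uses is the \emph{sum} of two $A$'s intersected with a $B$: one checks $(A_{pq}+A_{rs})\cap B_{ijk}=0$ unless $\{p,q,r,s\}=\{i,j,k\}$, while $(A_{ij}+A_{ik})\cap B_{ijk}=\mathbb{R}(\chi_{ki}-\chi_{ji})$ is one-dimensional. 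This single observation does double duty. First, it forces the permutation of the $A$'s to preserve the relation ``share an index'' (if $\phi^*(A_{ij})=A_{pq}$ and $\phi^*(A_{ik})=A_{rs}$ with $\{p,q\}\cap\{r,s\}=\emptyset$, the image of the one-dimensional intersection would be zero), so one can \emph{define} $\sigma(i)$ to be the common index of $\phi^*(A_{ij})$ and $\phi^*(A_{ik})$; this sidesteps your appeal to a characterization of which permutations of $2$-subsets come from $S_n$, which is delicate for $n=4$. Second, writing $\phi^*(\chi_{ki})=b\chi_{pr}+c\chi_{rp}\in A_{pr}$ and $\phi^*(\chi_{ji})=b'\chi_{qp}+c'\chi_{pq}\in A_{pq}$, and comparing $\phi^*(\chi_{ki}-\chi_{ji})$ with a scalar multiple of $\chi_{rp}-\chi_{qp}$ (since $\phi^*$ preserves the collection $\mathcal{C}$ of these one-dimensional intersections), kills the off-diagonal coefficients $b,c'$ and yields $\phi^*(\chi_{ki})=c\,\chi_{rp}$ directly; lattice preservation then gives $c=\pm1$. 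Your surjectivity and semidirect-product paragraphs are fine and coincide with the paper's.
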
    
\begin{proof}
Since $\phi^*$ is a linear isomorphism of  $\hom(G_n,\mathbb{R})$ and since 
$\phi^*:\Sigma^c(G_n)\to \Sigma^c(G_n)$ 
is a homeomorphism, $\phi^*$ preserves the collections of subspaces 
$\mathcal{A}:=\{A_{ij}\mid 1\le i<j\le n\}$ 
and $\mathcal{B}:=\{B_{ijk}\mid 1< i<j<k\le n\}$.  Note that $\mathcal{B}$ is non-empty since $n\ge 3$.
{\it In our notation $A_{pq}, B_{pqr}$ it is not assumed that $p<q<r$.}

It is readily seen that $(A_{pq}+A_{rs})\cap B_{ijk}=0$ unless $\{p,q,r,s\}=\{i,j,k\}$.
On the other hand $(A_{ij}+A_{ik})\cap B_{ijk}=\mathbb{R}(\chi_{k,i}-\chi_{j,i})$.  It follows that $\phi^*$ preserves 
the collection of $1$-dimensional spaces $\mathcal{C}:=
\{\mathbb{R}(\chi_{k,i}-\chi_{j,i})\mid \textrm{$i,j,k$ pairwise distinct}\}$.  

Let $\phi^*(A_{ij})=A_{pq}$,  
$\phi^*(A_{ik})=A_{rs},$ where $i,j,k$ are pairwise distinct.  Then $\{p,q\}\cap\{r,s\}$ is a singleton, say $s=p$---so that $\phi^*(A_{ik})=A_{pr}$---and 
$\phi^*(B_{ijk})=B_{pqr}.$  For, otherwise $(A_{ij}+A_{i,k})\cap B_{ijk}$ is one-dimensional whereas 
$\phi^*((A_{ij}+A_{ik})\cap B_{ijk})=(A_{pq}+A_{pr})\cap \phi^*(B_{ijk})=0$.

In view of the fact that $\phi^*$ stabilizes $\mathcal{C}$, we have 
\[\phi^*(\chi_{k,i}-\chi_{j,i})=a(\chi_{r,p}-\chi_{q,p}).\eqno(*)\] 
On the other hand $\chi_{k,i}\in A_{ik}$ and so $\phi^*(\chi_{k,i})\in \phi^*(A_{ik})=A_{pr}$ and so
$\phi^*(\chi_{k,i})=b\chi_{p,r}+c\chi_{r,p}$ for some $b,c\in \mathbb{R}$; similarly $\phi^*(\chi_{j,i})=b'\chi_{q,p}+c'\chi_{p,q}$ for some $b',c'\in \mathbb{R}$. Therefore 
\[\phi^*(\chi_{k,i}-\chi_{j,i}) =b\chi_{p,r}+c\chi_{r,p}-b'\chi_{q,p}+c'\chi_{p,q}.\eqno(**)\]

Comparing ($*$) and ($**$) we see that $b=0=c'$, that is, $\phi^*(\chi_{k,i})=c\chi_{r,p}$ and $\phi^*(\chi_{j,i})=b'\chi_{q,p}$.  Since $\phi^*:\hom(G_n;\mathbb{R})\to \hom(G_n,\mathbb{R})$ preserves the lattice $\hom(G_n,\mathbb{Z})$ and 
since  
$\chi_{k,i},\chi_{j,i}$ are part of a $\mathbb{Z}$-basis of $\hom(G_n,\mathbb{Z})$, we see that $c,b'=\pm 1$. 

To complete the proof, we define the permutation $\sigma\in S_n$ associated to $\phi\in \aut(G_n)$ as $\sigma(i)=p$, (with notation as above).  Note that $\sigma$ is indeed a bijection since $\phi^*$ is an isomorphism.  We define 
$\eta:\aut(G_n)\to S_n$ by $\eta(\phi)=\sigma$.  Then $\eta$ is a homomorphism of groups.  It is surjective 
since its restriction to $S_n\subset \mathcal{S}_n$ is the identity by Lemma \ref{signsym}.  This also shows that 
$\eta$ splits, completing the proof.
\end{proof}

\begin{remark}
It seems plausible that there exists a surjective homomorphism $\tau:\aut(G_n)\to \mathcal{S}_n$ such that 
$\phi^*(\chi_{i,j})=t_it_j\chi_{\sigma(i),\sigma(j)}, 1\le i\ne j\le n,$ where $\tau(\phi)=(t_1,\ldots,t_n)\in 
C_2^n, \sigma=\eta(\phi) \in S_n.$   This would then imply that 
$\aut(G_n)\cong N\rtimes \mathcal{S}_n$ for a suitable subgroup $N\subset \aut(G_n)$.
\end{remark}

The above proposition says that the matrix of $\phi^*$ with respect to the basis $\{\chi_{i,j}\mid 1\le i\ne j\} $ (ordered by, say, the lexicographic ordering of the indices $i,j$), is of the form $\phi^*=DP$ where $D$ is a diagonal matrix with 
eigenvalues $\pm 1$ and $P$ is a permutation matrix.  

\begin{lemma}  
Let $T=DP$ where $D,P\in M_m(\mathbb{R})$ are such that $D$ is a diagonal matrix and $P$ is a permutation 
matrix.  If $P=P_1.\ldots .P_k$ is a cycle decomposition then there exist eigenvectors $v_1,\ldots, v_k$  which 
are linearly independent. 
\end{lemma}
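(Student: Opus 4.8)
The plan is to exploit the block structure that $T=DP$ inherits from the cycle decomposition of the underlying permutation. Write $Pe_j=e_{\rho(j)}$ for the standard basis $e_1,\dots,e_m$ and $\rho\in S_m$; the hypothesis $P=P_1\cdots P_k$ means $\rho=\rho_1\cdots\rho_k$ is the decomposition into disjoint cycles, each $P_\ell$ being the permutation matrix of $\rho_\ell$, where we include the fixed points of $\rho$ as $1$-cycles so that the supports $I_\ell$ of the $\rho_\ell$ partition $\{1,\dots,m\}$. Put $V_\ell:=\operatorname{span}_{\mathbb{R}}\{e_i\mid i\in I_\ell\}$, so $\mathbb{R}^m=\bigoplus_{\ell=1}^k V_\ell$. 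Writing $D=\operatorname{diag}(d_1,\dots,d_m)$, one has $Te_j=D e_{\rho(j)}=d_{\rho(j)}e_{\rho(j)}$; since $\rho$ preserves each $I_\ell$, every $V_\ell$ is $T$-invariant and $T=\bigoplus_\ell T_\ell$ with $T_\ell:=T|_{V_\ell}$. (Here it is crucial that $D$ is diagonal, so that $Te_j$ stays inside the same $V_\ell$.)

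Next I would produce, for each $\ell$, a nonzero eigenvector $v_\ell\in V_\ell$ of $T_\ell$. Enumerate $I_\ell=\{j_1,\dots,j_r\}$ with $\rho_\ell=(j_1\,j_2\,\cdots\,j_r)$, i.e. $\rho(j_s)=j_{s+1}$ with indices read modulo $r$; then $Te_{j_s}=d_{j_{s+1}}e_{j_{s+1}}$. For $v=\sum_{s=1}^r c_s e_{j_s}$ the equation $Tv=\lambda v$ is equivalent to the cyclic system $c_s d_{j_{s+1}}=\lambda c_{s+1}$, $s=1,\dots,r$ (cyclically). If some $d_{j_t}=0$, then $Te_{j_{t-1}}=0$ and $v_\ell:=e_{j_{t-1}}$ works with $\lambda=0$ (this covers, in particular, a $1$-cycle with $d_{j_1}=0$). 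Otherwise all $d_{j_s}\neq0$; choose a scalar $\lambda$ with $\lambda^r=\prod_{s=1}^r d_{j_s}$, set $c_1:=1$ and $c_{s+1}:=c_s d_{j_{s+1}}/\lambda$ for $1\le s\le r-1$. A direct check shows the remaining "wrap-around" equation $c_r d_{j_1}=\lambda c_1$ reduces exactly to $\lambda^r=\prod_s d_{j_s}$, so $v_\ell:=\sum_s c_s e_{j_s}\neq0$ is an eigenvector of $T_\ell$; when $r=1$ this simply gives $v_\ell=e_{j_1}$, $\lambda=d_{j_1}$.

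Finally, the $v_1,\dots,v_k$ are linearly independent: each $v_\ell$ is a nonzero vector of $V_\ell$, and a family consisting of one nonzero vector from each summand of a direct sum decomposition is automatically independent (if $\sum_\ell a_\ell v_\ell=0$ then each $a_\ell v_\ell\in V_\ell$ vanishes by directness, whence $a_\ell=0$). This would complete the proof.

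I expect the only delicate point to be the choice of the scalar $\lambda$ with $\lambda^r=\prod_s d_{j_s}$ when all $d_{j_s}$ are nonzero: a real $r$-th root exists when the product is positive or when $r$ is odd, but when $r$ is even and $\prod_s d_{j_s}<0$ there is none, so one must either permit $v_\ell$ to be a complex eigenvector or invoke the specific features of the matrices $\phi^*=DP$ occurring here --- namely that $D$ has diagonal entries $\pm1$, so that in any case every eigenvalue of every block $T_\ell$ is a root of unity, which is all that is used subsequently. The block decomposition itself is routine; this real-versus-complex issue is the part warranting care.
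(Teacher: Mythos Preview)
Your approach is essentially the same as the paper's: both decompose $\mathbb{R}^m=\bigoplus_\ell V_\ell$ along the cycles of the permutation and then produce one nonzero eigenvector in each block, so that linear independence is automatic. The difference is in how the eigenvector in a block is written down. The paper simply takes the $DP$-orbit sum $v_\ell=\sum_{r=0}^{k-1}(DP)^r e_{i_1}$ and asserts it is an eigenvector with eigenvalue $d_{i_1}\cdots d_{i_k}$. That assertion is not correct in general: applying $DP$ to the orbit sum gives $v_\ell-e_{i_1}+(DP)^k e_{i_1}=v_\ell+(\lambda-1)e_{i_1}$ with $\lambda=\prod_s d_{i_s}$, so the orbit sum is an eigenvector only when $\lambda=1$ (or the cycle has length $1$). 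Your recurrence $c_{s+1}=c_s d_{j_{s+1}}/\lambda$ is the right repair and actually yields an eigenvector with eigenvalue any chosen $k$-th root of $\prod_s d_{j_s}$.

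You are also right to flag the real-versus-complex issue, and in fact this shows the lemma is false over $\mathbb{R}$ as stated: for a $2$-cycle with $d$-entries $1,-1$ the block is $\begin{pmatrix}0&1\\-1&0\end{pmatrix}$, which has no real eigenvector. The paper does not address this. In the intended application $D$ has entries $\pm1$, so each block has characteristic polynomial $x^k\mp1$; when the sign is $+$ and $k$ is even there is no real eigenvalue, but then $\phi^*$ has a nontrivial kernel on $V_\ell\otimes\mathbb{C}$ only at roots of unity, and what is actually needed downstream is just whether $1$ or $-1$ occurs as an eigenvalue of $\phi^*$ on the relevant blocks. Your remark that one should either pass to $\mathbb{C}$ or use the special structure of the application is exactly the right caveat; the paper's own proof needs it too.
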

\begin{proof}   The cycle decomposition allows us to express $\mathbb{R}^n$ as a direct sum $V_1\oplus\cdots V_k$ 
where $V_j$ is spanned by $\{e_i\mid P_j(i)\ne i\}.$  Specifically, if $P_j=(i_1, \ldots, i_k)$.  Then 
$v_j:=e_{i_1}+d_{i_1}e_{i_2}+\ldots+ d_{i_1}\ldots d_{i_{k-1}}e_{i_k}$  which is the sum of the vectors in the 
$DP$-orbit of $e_{i_1}$, is an eigenvector of $T$ with eignevalue 
$d_{i_1}\ldots d_{i_k}$.  Evidently $v_1,\ldots, v_k$ are linearly independent.
\end{proof}

We will use the above lemma to construct two linearly independent eigenvectors of $\phi^*$ (with further properties
that will be relevant for our purposes).
Let $\sigma=\eta(\phi)\ne id$ and $\phi^*=DP$ with $D$ diagonal and $P$ a permutation transformation (with respect to the 
basis $\{\chi_{i,j}\}$).   Suppose that $\sigma$ has a $k$-cycle in its cycle decomposition where $k>2$. 
Choose any $i$ that occurs in it and let $j:=\sigma(i)$.  Then $\chi_{i,j}$ and $\chi_{j,i}$ do not occur in the 
same orbit of $DP$ and so $v_{i,j}:=\sum_{0\le r<k} (DP)^r(\chi_{i,j})$ and $v_{j,i}:=\sum_{0\le j<k}
 (DP)^r\chi_{j,i}$ are eigenvectors of the same eigenvalue $\epsilon\in \{1,-1\}.$  Without loss of generality 
 we assume that $i=1, j=2$ and set $v_{1,2}=:u, v_{2,1}=v$. 
Suppose that there is no such $k$-cycle in $\sigma$. Then $\sigma$ is a product of disjoint transpositions. 
Without loss of generality, suppose that the transposition 
$(1,3)$ occurs in the decomposition.  Since $n>2$, either $\sigma$ has a 
fixed point, say $2$, or $n>3$ and, say the transposition $(2,4)$ occurs in the decomposition.
In the first case $u:=\chi_{1,2}+d_{1,2}\chi_{3,2}, v:=\chi_{2,1}+d_{2,1}\chi_{2,3}$ are eigenvectors of $P$ 
and in the latter case, $u:=\chi_{1,2}+d_{1,2}\chi_{3,4}$ and $v:=\chi_{2,1}+d_{2,1}\chi_{4,3}$ are eigenvectors 
of $P$.   Thus in {\it all} case $\chi_{1,2}$ occurs in $u$ and $\chi_{2,1}$ occurs in $v$ where $u,v$ are 
eigenvectors of $\phi^*$.  If $1$ is an eigenvalue of $\phi^*$, then $\bar{\phi}$ has a non-zero fixed element 
so $R(\phi)=\infty$. So assume that $\phi^*(u)=-u, \phi^*(v)=-v$.  Then there exists elements $\beta,\gamma\in G_n$ 
such that $\bar{\phi}(\bar{\beta})=-\bar{\beta}, \bar{\phi}(\bar{\gamma})=-\bar{\gamma}$ where $\bar{\alpha}_{1,2}, \bar{\alpha}_{2,1}$ occur in $\bar{\beta},\bar{\gamma}$ respectively, 
with coefficient $1$.  
  
Denote by $\Gamma_2:=\Gamma_2(G_n)$ the commutator subgroup of $G_n$ and by $\Gamma_3:=\Gamma_3(G_n)$ the subgroup $[G_n, \Gamma_2]
\subset \Gamma_2$.  Thus $G_n/\Gamma_3$ is a two-step-nilpotent group and 
we have the following exact sequences:\\
\[ 1 \to \Gamma_3\to G_n\to G_n/\Gamma_3 \to 1,\]
\[1\to \Gamma_2/\Gamma_3\to G_n/\Gamma_3\to G_n/\Gamma_2\to 1\]
 
Since $\Gamma_2$ and  
$\Gamma_3$ are characteristic in $G_n$ any automorphism of $G_n$ restricts to automorphisms 
of $\Gamma_2$ and $\Gamma_3$ and hence induces automorphisms of the quotients $G/\Gamma_3, \Gamma_2/\Gamma_3$ and $G_n/\Gamma_2=G_n^\ab$.   

Denote by $\theta\in \aut(G_n/\Gamma_3)$ the automorphism defined by $\phi$ and $\theta'$, the restriction of $\theta$ to $\Gamma_2/\Gamma_3$.  With notations as above, 
$[\beta,\gamma]\Gamma_3\in \Gamma_2/\Gamma_3$ satisfies $\theta'([\beta,\gamma]\Gamma_3)=[\beta,\gamma]\Gamma_3$.
By using the addition formula (Lemma \ref{additionformula}),  we conclude that $R(\theta)=\infty$, {\it provided} 
$[\beta,\gamma]/\Gamma_3$ is of infinite order.  Granting this for the moment, by the first part of the same lemma we 
conclude that $R(\phi)=\infty$ using the first exact sequence above. Since $\phi\in \aut(G_n)$ was arbitrary, we conclude that $G_n$ has the $R_\infty$-property.  
So all that remains is to show that $[\beta,\gamma]\Gamma_3$ is not a torsion element.

We use the fact that under the surjection $\psi: G_n\to G_2$ that maps $\alpha_{i,j}$ to $\alpha_{i,j}$ when $\{i,j\}=\{1,2\}$ 
and the remaining $\alpha_{i,j}$ to $1$,  
$\Gamma_k$ maps onto $\Gamma_k(G_2)$, k=2,3.    Let $\beta_2,\gamma_2\in G_2$ be the images of $\beta, \gamma$ respectively under $\psi$.  Then $\bar{\beta}_2=\bar{\alpha}_{1,2},\bar{\gamma_2}=\bar{\alpha}_{2,1}\in G_2^\ab$.
Therefore $[\beta_2,\gamma_2]\Gamma_3(G_2)=[\alpha_{1,2},\alpha_{2,1}]\Gamma_3(G_2)$.  Since $G_2$ is a free 
group with basis $\{\alpha_{1,2},\alpha_{2,1}\}$ we see that $[\alpha_{1,2},\alpha_{2,1}]\Gamma_3(G_2)$ generates 
an infinite cyclic group.  Hence the same is true of $[\beta,\gamma]\Gamma_3$.
This completes the proof of part (iii) of the main theorem, which is restated below:

\begin{theorem}\label{pure}
The group $G_n, n\ge 2,$ has the $R_\infty$-property. \hfill $\Box$
\end{theorem}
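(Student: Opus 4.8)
The plan is to reduce the problem, via the structure of $\aut(G_n)$ established in Proposition~\ref{gsym}, to producing a suitable fixed element in a nilpotent quotient and then invoking the addition formula. First I would dispose of the small cases: $G_2$ is free of rank $2$, which is known to have the $R_\infty$-property, so I assume $n\ge 3$. Since $G_n$ has infinitely many conjugacy classes (for instance the abelianization $G_n^\ab\cong\mathbb{Z}^{n^2-n}$ is infinite), Remark~\ref{periodic}-type reasoning shows it suffices to check $R(\phi)=\infty$ for a set of coset representatives of $\out(G_n)$; by Proposition~\ref{gsym} we have $\aut(G_n)=K\rtimes S_n$, and since $K$ lies in $\ker(\eta)$ it fixes every discrete character class in $\Sigma^c(G_n)$, so the results of \cite{gk} already give $R(\phi)=\infty$ for $\phi\in K$. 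Hence the crux is to handle $\phi$ with $\sigma=\eta(\phi)\ne id$.

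For such $\phi$, the matrix of $\phi^*$ on $\hom(G_n,\mathbb{R})$ with respect to the basis $\{\chi_{i,j}\}$ has the form $DP$ with $D$ diagonal with $\pm1$ entries and $P$ a permutation matrix. Next I would use the eigenvector lemma for matrices of the form $DP$ to extract, from whatever cycle structure $\sigma$ has (a $k$-cycle with $k>2$, or else a product of transpositions together with either a fixed point or a second transposition — the cases enumerated in the text), two linearly independent eigenvectors $u,v$ of $\phi^*$ in which $\chi_{1,2}$ and $\chi_{2,1}$ respectively occur with coefficient $1$. If $1$ is an eigenvalue of $\phi^*$ then $\bar\phi$ has a nontrivial fixed point and Lemma~\ref{additionformula}(i) immediately gives $R(\phi)=\infty$, so I assume $\phi^*u=-u$, $\phi^*v=-v$. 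Lifting, there are $\beta,\gamma\in G_n$ with $\bar\phi(\bar\beta)=-\bar\beta$, $\bar\phi(\bar\gamma)=-\bar\gamma$, and $\bar\alpha_{1,2},\bar\alpha_{2,1}$ occurring with coefficient $1$ in $\bar\beta,\bar\gamma$. Passing to the two-step nilpotent quotient $G_n/\Gamma_3$, the induced automorphism $\theta$ restricts to $\theta'$ on $\Gamma_2/\Gamma_3$, and the bilinear commutator pairing shows $\theta'([\beta,\gamma]\Gamma_3)=[\beta,\gamma]\Gamma_3$ — here the two minus signs multiply to $+1$, which is exactly why we need \emph{two} eigenvectors with eigenvalue $-1$ rather than one.

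The main obstacle is then the only nonformal point: showing $[\beta,\gamma]\Gamma_3$ has infinite order in $\Gamma_2/\Gamma_3$. I would handle this by a projection argument. The assignment $\alpha_{i,j}\mapsto\alpha_{i,j}$ for $\{i,j\}=\{1,2\}$ and $\alpha_{i,j}\mapsto 1$ otherwise extends, thanks to McCool's relations, to a surjective homomorphism $\psi:G_n\to G_2$, and $\psi(\Gamma_k(G_n))=\Gamma_k(G_2)$ for $k=2,3$. The images $\beta_2:=\psi(\beta),\gamma_2:=\psi(\gamma)$ have abelianizations $\bar\alpha_{1,2},\bar\alpha_{2,1}$, so $[\beta_2,\gamma_2]\Gamma_3(G_2)=[\alpha_{1,2},\alpha_{2,1}]\Gamma_3(G_2)$; since $G_2$ is free on $\{\alpha_{1,2},\alpha_{2,1}\}$, the basic commutator $[\alpha_{1,2},\alpha_{2,1}]$ is nontrivial in $\Gamma_2(G_2)/\Gamma_3(G_2)$, a free abelian group, hence of infinite order, and so is its preimage $[\beta,\gamma]\Gamma_3$.

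To finish, I would assemble the pieces: $\theta'$ fixes the infinite-order element $[\beta,\gamma]\Gamma_3\in\Gamma_2/\Gamma_3$, so by Lemma~\ref{additionformula}(i) applied to $1\to\Gamma_2/\Gamma_3\to G_n/\Gamma_3\to G_n^\ab\to1$ we get $R(\theta)=\infty$; then applying Lemma~\ref{additionformula}(i) again to $1\to\Gamma_3\to G_n\to G_n/\Gamma_3\to1$ yields $R(\phi)=\infty$. Since $\phi$ with $\eta(\phi)\ne id$ was arbitrary and the case $\eta(\phi)=id$ (i.e.\ $\phi\in K$, up to inner automorphisms) is covered by \cite{gk}, every automorphism of $G_n$ has infinitely many twisted conjugacy classes, so $G_n$ has the $R_\infty$-property. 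Combined with Theorems~\ref{symmetricgroup} and~\ref{houghton}, this completes the proof of Theorem~\ref{main}.
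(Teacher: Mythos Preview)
Your approach is essentially identical to the paper's: same reduction to $n\ge 3$, same use of Proposition~\ref{gsym} to write $\phi^*=DP$, same case analysis on the cycle structure of $\sigma$ to produce eigenvectors $u,v$ containing $\chi_{1,2},\chi_{2,1}$, same passage to the two-step nilpotent quotient $G_n/\Gamma_3$, same projection $\psi:G_n\to G_2$ to establish infinite order of $[\beta,\gamma]\Gamma_3$, and the same two-fold application of Lemma~\ref{additionformula}(i).

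There is one slip in your handling of the case $\sigma=\eta(\phi)=id$. You assert that $K=\ker(\eta)$ ``fixes every discrete character class in $\Sigma^c(G_n)$'' and hence that \cite{gk} disposes of this case. But the $\eta$ of Proposition~\ref{gsym} is the map to $S_n$, not the map $\aut(G_n)\to\homeo(\Sigma^c(G_n))$ of \S\ref{sigmath}; an element $\phi\in K$ can still act by $\phi^*(\chi_{i,j})=-\chi_{i,j}$, which sends $[\chi_{i,j}]$ to the \emph{distinct} class $[-\chi_{i,j}]$ (the character sphere is the quotient by $\mathbb{R}_{>0}$, not by $\mathbb{R}^*$). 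So the finite-index subgroup furnished by \cite{gk} is in general a proper subgroup of your $K$, and the case $\sigma=id$ with some $\epsilon_{i,j}=-1$ is not covered by your appeal to \cite{gk}. The fix is immediate and costs nothing: when $\sigma=id$ the matrix $\phi^*=D$ is already diagonal, so $u=\chi_{1,2}$ and $v=\chi_{2,1}$ are themselves eigenvectors, and your subsequent argument (split on whether $+1$ occurs as an eigenvalue, else take both eigenvalues $-1$ and form $[\beta,\gamma]$) goes through verbatim. The paper itself also writes out the construction of $u,v$ only under the hypothesis $\sigma\ne id$, leaving this trivial case implicit.
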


\section{The Thompson group $T$}

Recall, from \S1, the description of the Richard Thompson group $T$ as the group of 
all orientation preserving piecewise linear homeomorphisms of $\mathbb{S}=I/\{0,1\}$ 
with slopes in the multiplicative group generated by $2\in \mathbb{R}_{>0}$ and break 
points in $\mathbb{Z}[1/2]$.    We regard the Thompson group $F$ as the subgroup 
of $T$ consisting of elements which fix the element $1\in \mathbb{S}^1$. 
In this section we prove the following result.  

\begin{theorem} \label{t} {\em (\cite{bmv}, \cite{gs}).}
The Richard Thompson group $T$ has the $R_\infty$-property.
\end{theorem}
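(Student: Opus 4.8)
The plan is to combine the main obstacle — the simplicity of $T$, which kills any $\Sigma$-theoretic input — with the structure of $\out(T)$, which is known to be $\mathbb{Z}/2$ (generated by the reflection of $\mathbb{S}^1$). Since $T$ has infinitely many conjugacy classes (elements of finite order $k$ exist for every $k$, namely the rotations by $1/k$ if one works with the model allowing such rotations, or more carefully the torsion elements constructed from dyadic subdivisions), by the observation in \S\ref{periodic} it suffices to treat a set of coset representatives of $\out(T)$. Thus we need only prove $R(\phi)=\infty$ for one non-inner automorphism $\phi$, which we may take to be the involution $\rho$ induced by conjugation by the orientation-reversing map $x\mapsto -x$ of $\mathbb{S}^1=\mathbb{R}/\mathbb{Z}$ (after checking, or citing, that this $\rho$ is indeed not inner and generates $\out(T)$).

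First I would invoke Lemma \ref{finiteorder} with $\theta=\rho$ and $n=2$: since $\rho^2=\mathrm{id}$, Remark \ref{torsionfix} reduces the problem to exhibiting elements of $\fix(\rho)$ of arbitrarily large finite order. So the technical heart is to produce, for each $k\ge 1$, an element $g_k\in T$ of order $k$ (or of order tending to infinity with $k$) that commutes with the reflection, i.e. is symmetric under $x\mapsto -x$. The natural candidates are the ``symmetric'' torsion elements of $T$: take a dyadic subdivision of $\mathbb{S}^1$ into $2^m$ equal arcs that is invariant under $x\mapsto -x$, and let $g$ cyclically permute these arcs by a rotation through some number of steps; choosing the rotation amount coprime appropriately gives elements whose order is as large as we like, and the invariance of the subdivision under the reflection forces $g$ itself to be $\rho$-fixed. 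One must check these $g_k$ genuinely lie in $T$ (piecewise linear, dyadic breakpoints, slopes powers of $2$ — automatic since each arc maps affinely to another arc of the same dyadic length) and that $\rho(g_k)=g_k$ (conjugation by $x\mapsto -x$ sends the permutation of arcs to the permutation of the reflected arcs, which coincides because the subdivision and the cyclic action are reflection-symmetric).

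The main obstacle I anticipate is not the construction but two bookkeeping points: (a) verifying that the reflection involution is \emph{not} inner, so that it is a legitimate nontrivial coset representative in $\out(T)$ — one can cite that $\out(T)\cong\mathbb{Z}/2$ (e.g. from Brin's computation of automorphisms of $T$, or handle it directly by noting an inner automorphism preserves orientation while $\rho$ reverses a natural cyclic order on periodic orbits); and (b) making sure the symmetric torsion elements really attain unbounded order, which requires choosing the subdivision fine enough and the rotation step suitably, and confirming there is no hidden collapse of the order when passing to the PL homeomorphism. Once these are in place, Lemma \ref{finiteorder}/Remark \ref{torsionfix} delivers $R(\rho)=\infty$, and since $R(\phi)=\infty$ automatically for inner $\phi$ (infinitely many conjugacy classes), every $\phi\in\aut(T)$ has $R(\phi)=\infty$, proving that $T$ has the $R_\infty$-property.
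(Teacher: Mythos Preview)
Your overall framework is the same as the paper's: invoke Brin's theorem that $\out(T)\cong\mathbb{Z}/2\mathbb{Z}$ is generated by the reflection $\rho$, note that $T$ has infinitely many conjugacy classes, and then use Lemma~\ref{finiteorder}/Remark~\ref{torsionfix} with $n=2$ to handle $\rho$. However, your proposed construction of $\rho$-fixed elements fails, and in fact the torsion-based criterion you are reaching for cannot work here at all.

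The problem is that conjugating a rotation by a reflection yields the \emph{inverse} rotation, not the rotation itself: if $r(x)=-x$ and $g(x)=x+a$, then $(r g r^{-1})(x)=-g(-x)=x-a=g^{-1}(x)$. So your cyclic permutation of the $2^m$ arcs satisfies $\rho(g)=g^{-1}$, not $\rho(g)=g$; the reflection-invariance of the subdivision does not help. More is true: \emph{no} element of $T$ of order greater than $2$ can lie in $\fix(\rho)$. Indeed, if $\rho(g)=g$ then $g$ commutes with $r$, hence permutes the two fixed points $\{0,1/2\}$ of $r$; thus $g^2$ fixes $0$ and $1/2$. But a finite-order orientation-preserving homeomorphism of $\mathbb{S}^1$ with a fixed point is the identity (it is conjugate to a rotation, and a rotation with a fixed point is trivial). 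Hence $g^2=\mathrm{id}$.

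The paper therefore does \emph{not} use the second sentence of Remark~\ref{torsionfix} (torsion in $\fix(\rho)$), but the first: it produces infinite-order elements $h_k\in\fix(\rho)$ whose squares are pairwise non-conjugate. Concretely, one takes $f_k\in F$ with $\supp(f_k)\subset(0,1/2)$ having exactly $k$ components, sets $h_k:=f_k\cdot\rho(f_k)$ (automatically $\rho$-fixed since the two factors have disjoint supports and $\rho$ swaps them), and then uses the conjugacy invariant $\sigma(\,\cdot\,)=$ number of connected components of the support. One checks $\sigma(h_k^2)=\sigma(h_k)=2k$ via Lemma~\ref{iterates}, so the $h_k^2$ lie in pairwise distinct conjugacy classes. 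Your proposal needs to replace the rotation idea with this (or a similar) support-based construction.
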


 The fact that $T$ has the $R_{\infty}$ property has been proved first  by Burillo, Matucci, and Ventura \cite{bmv} (see also \cite{gs}).  The crucial
point in the proofs of the result above is the same  in both \cite{bmv} and \cite{gs} and both the proofs rely on the 
description of the outer automorphism of $T$ (recalled in Theorem \ref{brin} below). 
However, since the approaches before getting to the main point are slightly different, we provide our proof here which may contain some features that are useful for other situations (such as in Remark \ref{gent} below).

 It is readily seen that 
the reflection map $r$ defined as $r(x)=1-x, x\in [0,1]$, 
induces an automorphism $\rho: T\to T$ defined as $\rho(f)=r\circ f\circ r^{-1}=r\circ f\circ r$. We now state 
the following result of Brin.

\begin{theorem} \label{brin} {\em (Brin \cite{brin})} 
 The group of inner automorphisms of $T$ is of index two in $\aut(T)$ and the 
quotient group $\out(T)$ is generated by 
$\rho$. 
\end{theorem}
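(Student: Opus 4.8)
The plan is to prove this rigidity statement by showing that \emph{every} automorphism of $T$ is spatial, i.e.\ induced by a homeomorphism of the circle $\mathbb{S}$ that normalises $T$, and then to identify the normaliser $N:=N_{\homeo(\mathbb{S})}(T)$ explicitly. Granting the spatial realisation, one obtains an isomorphism $\aut(T)\xrightarrow{\ \cong\ }N$, $\phi\mapsto\tau_\phi$, under which $\iota_g$ (for $g\in T$) corresponds to $g$ itself and $\rho=\iota_r$ corresponds to the reflection $r$. Since $T$ is simple with trivial centre, $\mathrm{Inn}(T)\cong T$, so the entire statement reduces to the single claim $N=T\rtimes\langle r\rangle$ with $r^2=\mathrm{id}$.

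For the spatial realisation I would invoke a reconstruction theorem of Rubin. The action of $T$ on $\mathbb{S}$ is faithful, $\mathbb{S}$ is a compact Hausdorff space with no isolated points, and the action is \emph{locally dense}: for every nonempty open arc $U$ the rigid stabiliser $\{f\in T:\supp(f)\subseteq U\}$ contains a copy of $F$ and already acts with dense orbits inside $U$. These are exactly Rubin's hypotheses, so any isomorphism $\phi:T\to T$ is implemented by a \emph{unique} $\tau_\phi\in\homeo(\mathbb{S})$ with $\phi(f)=\tau_\phi\circ f\circ\tau_\phi^{-1}$ for all $f\in T$; uniqueness makes $\phi\mapsto\tau_\phi$ an injective homomorphism with image $N$. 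If one prefers to avoid citing Rubin, the same conclusion can be reached by reconstructing the points and topology of $\mathbb{S}$ from maximal chains of rigid stabilisers inside $T$, but this merely reproves Rubin's machinery in the present case.

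The main work is then the computation of $N$, and this is the step I expect to be the genuine obstacle. First, composing with $r$ if necessary---recall $r\in N$ precisely because $\rho\in\aut(T)$---one reduces to an orientation-preserving $\tau\in N$ and must show $\tau\in T$. Since $\tau f\tau^{-1}\in T$ for every $f\in T$, and elements of $T$ have breakpoints only in $D:=\mathbb{Z}[1/2]/\mathbb{Z}$ with slopes in $\langle 2\rangle$, one first shows $\tau(D)=D$: choosing $f\in T$ with a genuine break at a dyadic $p$ forces a break of $\tau f\tau^{-1}$ at $\tau(p)$, whence $\tau(p)\in D$, and symmetrically for $\tau^{-1}$. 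The decisive point is a local linearisation argument at each dyadic $p$: taking $f\in T$ that acts near $p$ as multiplication by $2$ in the dyadic coordinate, the requirement that $\tau f\tau^{-1}$ again be affine with slope a power of $2$ near $\tau(p)$ forces the one-sided germs of $\tau$ at $p$ to be affine with dyadic slope. Carrying this out on the dense set $D$ and matching germs across consecutive dyadics shows that $\tau$ is itself dyadic piecewise-linear, i.e.\ $\tau\in T$.

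Finally, assembling the two cases: an orientation-reversing $\tau\in N$ satisfies $\tau r\in T$, hence $\tau\in Tr$, giving $N=T\sqcup Tr=T\rtimes\langle r\rangle$ with $\langle r\rangle\cong\mathbb{Z}/2$. Transporting through the isomorphism $\aut(T)\cong N$, the subgroup $\mathrm{Inn}(T)$ corresponds to $T$ and the nontrivial coset is represented by $r$, that is, by $\rho$. Therefore $\mathrm{Inn}(T)$ has index two in $\aut(T)$ and $\out(T)=\langle\bar\rho\rangle$ is cyclic of order two generated by $\rho$, which is the assertion of the theorem.
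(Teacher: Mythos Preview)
The paper does not prove this statement; Theorem~\ref{brin} is quoted with attribution to Brin \cite{brin} and used as a black box in the proof of Theorem~\ref{t}. So there is no ``paper's own proof'' to compare against.

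That said, your outline is the standard modern route to Brin's result and is essentially correct in spirit. A few remarks. First, Brin's original argument predates the Rubin-style reconstruction machinery and proceeds by a direct (and rather delicate) analysis of germs; your invocation of Rubin (or McCleary--Rubin, which the paper itself cites in Remark~\ref{gent} for the generalized groups $T_{n,r}$) is a legitimate and now common shortcut, but you should be aware it is not how Brin did it. Second, the ``decisive'' local-linearisation step---showing that an orientation-preserving $\tau\in N$ is actually piecewise linear with dyadic breaks and slopes in $\langle 2\rangle$---is where all the content lies, and your sketch is too breezy here: the germ argument you indicate shows that $\tau$ conjugates the germ of $x\mapsto 2x$ at $p$ to the germ of some element of $T$ at $\tau(p)$, but concluding that the germ of $\tau$ itself is \emph{affine} (rather than merely conjugating affine to affine) requires more care; one typically uses that the only germs of homeomorphisms of $\mathbb{R}$ commuting with $x\mapsto 2x$ are linear. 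Third, you should also argue that the set of breakpoints of $\tau$ is finite (equivalently, that the locally-affine pieces glue to finitely many), which is not automatic from the local statement. None of these are fatal---they are exactly the steps Brin carries out---but as written your proposal is a plan rather than a proof.
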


As observed in \S\ref{periodic},  for any group $\Gamma$ and any automorphism $\phi\in \aut(\Gamma)$, and any $g\in \Gamma$, 
$R(\phi)=\infty$ if and only if $R(\phi\circ\iota_g)=\infty$.  Therefore, to establish the $R_\infty$-property for $\Gamma$, 
it is enough to show that $R(\phi)=\infty$ for a set of coset representatives of $\out(\Gamma)$.
  In the case 
$\Gamma=T$, in view of  Theorem \ref{brin} due to Brin,  we need only show that $R(\rho)=\infty$ and $R(id)=\infty$.    
The latter equality is established in 
Proposition \ref{conjugacy} as an easy consequence of Lemma \ref{support} below.  Since $\rho^2=id$, we may 
apply Remark \ref{torsionfix} to show that $R(\rho)=\infty$.   The main idea is to make use of 
homeomorphisms in $Fix(\rho)$, whose supports have arbitrarily large number of disjoint intervals in $\mathbb{S}^1.$ 
(This was also the idea used in the proof by Burillo, Matucci, and Ventura \cite{bmv}.) 

\begin{definition}  {\it Let $X$ be a Hausdorff topological space.\\
(i) The {\em support of}  $f\in \homeo(X)$ is the open set $\supp(f):=\{x\in X\mid f(x)\ne x\}$. \\
(ii) Let $\sigma:\homeo(X) \to \mathbb{N}\cup \{\infty\}$ be  
defined as follows: $\sigma(id)=0$, if $f\ne id$, $\sigma(f)$ is the number of connected components of $\supp(f)$ if it is finite, otherwise $\sigma(f)=\infty$.} 
\end{definition}

\begin{lemma} \label{support} 
Let $\Gamma\subset \homeo(X)$ and let 
$\sigma$ be as defined above.  Suppose that $\theta\in \homeo(X)$ normalizes $\Gamma$.  Then 
$\sigma(f)=\sigma (\theta f \theta^{-1})$.
\end{lemma}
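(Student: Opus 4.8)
\textbf{Proof plan for Lemma \ref{support}.}
The plan is to show that a homeomorphism $\theta$ of $X$ carries the support of any $f\in\homeo(X)$ onto the support of the conjugate $\theta f\theta^{-1}$, and moreover does so homeomorphically, so that the number of connected components is preserved. The first step is to establish the set-theoretic identity $\supp(\theta f\theta^{-1})=\theta(\supp(f))$. This is a direct unwinding of definitions: for $x\in X$, one has $(\theta f\theta^{-1})(x)\ne x$ if and only if $f(\theta^{-1}(x))\ne \theta^{-1}(x)$, i.e. if and only if $\theta^{-1}(x)\in\supp(f)$, i.e. if and only if $x\in\theta(\supp(f))$. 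No continuity is needed for this equality itself; it is purely a statement about the underlying bijection.

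The second step is to observe that $\theta$, being a homeomorphism of $X$, restricts to a homeomorphism from the open set $\supp(f)$ onto the open set $\supp(\theta f\theta^{-1})=\theta(\supp(f))$. A homeomorphism induces a bijection on the set of connected components of any subspace, sending components to components; hence $\supp(f)$ and $\supp(\theta f\theta^{-1})$ have the same number of connected components. By the definition of $\sigma$, this gives $\sigma(f)=\sigma(\theta f\theta^{-1})$ when this common number is finite, and the value $\infty$ is likewise preserved; the trivial case $f=id$ (equivalently $\theta f\theta^{-1}=id$) is immediate since then both supports are empty and $\sigma$ takes the value $0$ on both. Note that the hypothesis that $\theta$ normalizes $\Gamma$ plays no role in the argument beyond guaranteeing that $\theta f\theta^{-1}$ again lies in $\Gamma$ so that the statement makes sense; the conclusion $\sigma(f)=\sigma(\theta f\theta^{-1})$ holds for any $f\in\homeo(X)$ and any $\theta\in\homeo(X)$.

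There is essentially no obstacle here: the only point requiring the slightest care is to be explicit that "same number of connected components" is a homeomorphism invariant applied to the subspaces $\supp(f)$ and $\theta(\supp(f))$ rather than to $X$ itself, and to handle the $\infty$ and $id$ cases so that the statement matches the piecewise definition of $\sigma$. The lemma will then be applied in the sequel with $X=\mathbb{S}^1$, $\Gamma=T$, and $\theta=r$ the reflection, to see that $\sigma$ is constant on $\rho$-twisted conjugacy-relevant data and, more importantly, to produce elements of $\fix(\rho)$ whose supports have arbitrarily many components, feeding into Remark \ref{torsionfix}.
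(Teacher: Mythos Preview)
Your proposal is correct and follows exactly the same approach as the paper: establish the identity $\supp(\theta f\theta^{-1})=\theta(\supp(f))$ and then invoke the homeomorphism invariance of the number of connected components. The paper's proof is just a two-sentence version of what you wrote; your added remarks about the $id$/$\infty$ cases and about the normalizing hypothesis being used only to keep $\theta f\theta^{-1}$ in $\Gamma$ are accurate elaborations but not departures from the paper's argument.
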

\begin{proof} It is clear that the number of 
connected components of an open set $U\subset X$ remains unchanged under a homeomorphism of $X$.
The lemma follows immediately from the observation 
that $\supp(\theta f \theta^{-1})=\theta(\supp(f))$.  
\end{proof}

\begin{proposition} \label{conjugacy}{\em 
The groups $F$ and $T$ have infinitely many 
conjugacy classes.}
\end{proposition}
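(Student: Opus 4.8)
The plan is to promote the function $\sigma$ from the preceding definition into a conjugacy invariant. First I would observe that $\sigma$ is constant on conjugacy classes of any subgroup $\Gamma\subseteq\homeo(X)$: if $f,g\in\Gamma$ then $g$ normalizes $\Gamma$, so Lemma~\ref{support} gives $\sigma(f)=\sigma(gfg^{-1})$. Taking $X=\mathbb{S}^1$ and $\Gamma=F$, and then $\Gamma=T$, it therefore suffices to exhibit, for every integer $k\ge1$, an element $f_k\in F\subseteq T$ with $\sigma(f_k)=k$; the classes $[f_k]$ will then be pairwise distinct, both in $F$ and in $T$.

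To produce such elements I would use the self-similarity of Thompson's group $F$. Fix $m$ with $2^m>k+1$ and set $J_j:=[\,j/2^m,(j+1)/2^m\,]$ and $J_j^\circ:=(\,j/2^m,(j+1)/2^m\,)$ for $1\le j\le k$; these are pairwise disjoint dyadic intervals, all contained in $(0,1)$ and hence disjoint from a neighbourhood of the basepoint $1\in\mathbb{S}^1$. Restricting a homeomorphism to $J_j$ and extending by the identity identifies the group of orientation-preserving piecewise-linear homeomorphisms of $J_j$ with dyadic break points and slopes powers of $2$ with a subgroup of $F$, all of whose elements are supported inside $J_j^\circ$; under this identification the standard generator $x_0$ of $F$ becomes an element $h_j\in F$ with $\supp(h_j)=J_j^\circ$, so that $\sigma(h_j)=1$. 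Since the $J_j$ are pairwise disjoint the $h_j$ commute, $f_k:=h_1h_2\cdots h_k$ is a well-defined element of $F$ fixing a neighbourhood of the basepoint, and $\supp(f_k)=\bigsqcup_{j=1}^{k}J_j^\circ$ has exactly $k$ connected components; hence $f_k\in F\subseteq T$ and $\sigma(f_k)=k$.

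Putting the pieces together: $\sigma$ is a conjugacy invariant of both $F$ and $T$ and takes the pairwise distinct values $\sigma(f_k)=k$, so $\{[f_k]\mid k\ge1\}$ is an infinite family of distinct conjugacy classes in $F$, and likewise in $T$. I do not expect a real obstacle here; essentially all the content already sits in Lemma~\ref{support}. The only step requiring a brief routine verification is the self-similarity claim — that restricting an element of $F$ to a dyadic subinterval and extending by the identity again yields an element of $F$ — which is standard: continuity at the endpoints is automatic, the new break points are dyadic, and the only new slope introduced is $1$, a power of $2$.
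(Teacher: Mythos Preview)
Your proposal is correct and follows essentially the same approach as the paper: use Lemma~\ref{support} to see that $\sigma$ is a conjugacy invariant, then exhibit elements of $F$ realizing every positive integer value of $\sigma$. The paper's own proof simply asserts the existence of such elements without writing down an explicit construction, whereas you supply one via the self-similarity of $F$; both arguments are the same in substance.
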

\begin{proof}
This follows from Lemma \ref{support} on observing that 
$F$ has elements $f$ with $\sigma(f)$ 
any arbitrary prescribed positive integer.  
Since $F\subset T$, the same is true of $T$ as well.
\end{proof}

\begin{lemma} \label{iterates}
Suppose that $h:\mathbb{R}\to \mathbb{R}$ is an 
orientation preserving homeomorphism.   Then $\supp(h)=\supp(h^k)$ for any non-zero integer $k$. 
\end{lemma}
\begin{proof} Since $\supp(h)=\supp(h^{-1})$ we may 
assume that $k>0$.
Since $h$ is orientation preserving, it is order preserving. 
Suppose that $x\in \supp(h)$ so that $h(x)\ne x$. Say,  $x<h(x)$.  Then applying $h$ to the inequality we obtain $h(x)<h^2(x)$ so that $x<h(x)<h^2(x)$. Repeating this 
argument yields $x<h(x)<\cdots <h^k(x)$ and so $x\in \supp(h^k)$.   The case when $x>h(x)$ is analogous. Thus $\supp(h)\subset \supp(h^k)$.  On the other hand, if $x\notin \supp(h)$, then $h(x)=x$ and so $h^k(x)=x$ for all $k$.  Therefore equality should hold, completing the proof. \end{proof}

We are now ready to prove our main theorem. 

\noindent
{\it Proof of Theorem \ref{t}:}  
By Theorem \ref{brin}(ii), $\out(T)\cong \mathbb{Z}/2\mathbb{Z}$ generated by $\rho$.  By Proposition 
\ref{conjugacy}, $R(id)=\infty$.   
It only remains to verify that $R(\rho)=\infty$.  We apply 
Remark \ref{torsionfix} with $\theta=\rho, n=2, \gamma=1$.  
It remains to show that $\fix(\rho)$ has infinitely many elements $h$ such 
that the $h^2$ are pairwise non-conjugate.  

Let $k\ge 1$.
Let $f_k\in F\subset T$ be such that $\supp(f_k)\subset (0,1/2)$ and has exactly $k$ components. Thus, $\sigma(f_k)=k$. (It is easy to construct such an 
element.) 
Then $\supp(\rho(f_k))
=\supp(rf_kr^{-1})=r(\supp(f_k))\subset (1/2,1)$ is disjoint from $\supp(f_k)\subset (0,1/2)$.  In particular $f_k.\rho(f_k)=\rho(f_k).f_k=:h_k$, $\supp(h_k)=\supp(f_k)\cup r(\supp(f_k))$ and so $\sigma(h_k)=2k$. 
Moreover, since $\rho^2=1$, we see that $h_k\in Fix(\rho)$.
By 
Lemma \ref{iterates}, we have $\sigma(h_k^2)=\sigma(h_k)=2k$.
It follows that $h_k^2$ are pairwise non-conjugate in $T$, completing the proof. \hfill $\Box$.

\begin{remark} \label{gent}
 In the case of the generalized Thompson groups $T_{n,r}$, suppose that $\theta\in \aut(T_{n,r})$ 
is a torsion element, say of order $m$, 
our method of proof of Theorem \ref{t} can be applied to show that $R(\theta)=\infty$.   In fact, applying a theorem of McCleary and Rubin \cite{mr}, to the group $T_{n,r}$, we obtain 
that the automorphism group of $T_{n,r}$ equals its normalizer in the group of all homeomorphisms of the circle $\mathbb{S}^{1}=[0,r]/\{0,r\}$.  Let $\theta \in \aut(T_{n,r})$ and $f\in \mathbb{S}^1$ is such that 
$\theta(x)=fxf^{-1}$ with $f\in \homeo(\mathbb{R}/r\mathbb{Z})$.  
Suppose $f^m=\gamma\in T_{n,r}$ so that $\theta$ represents a torsion element of $\out(T_{n,r})$.  If $\gamma=1$, 
our method of proof of Theorem \ref{t} can be applied to show that $R(\theta)=\infty$.  See \cite{gs} for details.
However, when $\gamma\ne 1$, it is not clear to us how to find elements of $\fix(\theta)$ 
satisfying the hypotheses of Lemma \ref{finiteorder}.  
Our approach yields no information about automorphisms which represent 
non-torsion elements in the outer automorphism group.  The study of the $R_\infty$ property for the 
groups $T_{n,r}$ is a work in progress.

\end{remark} 
\section{Direct product of groups}

It was shown in \cite[Theorem 4.8]{gk} that if $G=G_1\times \cdots \times G_n$ where each $G_i$ is a 
finitely generated group with the property that $\Sigma^c(G_i)$ is a finite set of discrete character classes, not 
all of them empty, then there exists a finite index subgroup $H$ of $\aut(G)$ such that $R(\phi)=\infty$ 
for all $\phi\in H$.  Further, when each $G_i$ is a generalized Richard Thompson 
group $F_{n_i,\infty}, n_i\ge 2,$  then $G$ itself has the $R_\infty$-property.  

We shall strengthen the above result here.   We make use of a result of 
Meinert (as did Gon\c{c}alves and Kochloukova \cite{gk}) that describes the $\Sigma$-invariant of 
a direct product which is recalled below. (Meinert's theorem describes the $\Sigma$-invariant in the more general  
setting of a graph product of groups.)  

Let $G=G_1\times \cdots\times G_n$ and let $r_j=rk(G_j^\ab)$ so that $S(G_j)\cong \mathbb{S}^{r_j-1}$.   We will assume that $r_1\ge 1$.  Then $S(G)=\prod_{1\le j\le n} \hom(G_j,\mathbb{R})\setminus\{0\}/\sim\cong \mathbb{S}^{r-1}$ and so   
$S(G)\cong \mathbb{S}^{r-1}, 
r:=\sum_{1\le j\le n} r_j$.   It is understood that $S(G_j)=\emptyset $ if $r_j=0$. 
The sphere $S(G_i)$ is identified with the subspace of $S(G)$ with 
the set of points with $j$-th coordinate equal to zero for all $j\ne i$.  Observe that $S(G_i)\cap S(G_j)=\emptyset $ 
if $i\ne j$. In order to emphasise this we shall write $S(S_i)\sqcup S(S_j)$ to denote their union, thought of as 
subspaces of $S(G)$.

Recall that $\Sigma^c(G)$ denotes the complement of $\Sigma^1(G)\subset S(G).$

\begin{theorem}  \label{meinert} {\em (Meinert \cite{meinert})}
Let $G=G_1\times \cdots \times G_n$ be finitely generated and let $r_1=rk(G_1^\ab)>0$.  With the above notations,
$\Sigma^c(G)$ equals $\sqcup_{1\le j\le n} \Sigma^c(G_j)$. \hfill $\Box$
\end{theorem}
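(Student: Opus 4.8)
The plan is to prove the statement directly from the combinatorial description of $\Sigma^1$ in terms of connectivity of level sets in the Cayley graph, rather than from any structure theory. I would fix a finite generating set $X=X_1\sqcup\cdots\sqcup X_n$ of $G$ with $X_j$ a finite generating set of the factor $G_j$ (viewed inside $G$), and invoke the Bieri--Strebel criterion: for a nonzero character $\chi$, one has $[\chi]\in\Sigma^1(G)$ if and only if the full subgraph $\Gamma_{\ge0}$ of $\mathrm{Cay}(G,X)$ spanned by $\{g\in G\mid \chi(g)\ge0\}$ is connected. Since $\chi(g_1,\dots,g_n)=\sum_j\chi_j(g_j)$ with $\chi_j:=\chi|_{G_j}$, each nonzero $\chi$ is of exactly one of two kinds: either it is \emph{mixed}, meaning $\chi_i\ne0$ for at least two indices $i$ (equivalently $[\chi]$ lies in no subsphere $S(G_j)$), or it is supported on a single factor $G_j$ (equivalently $[\chi]\in\bigsqcup_j S(G_j)$). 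The whole proof reduces to showing that every mixed character lies in $\Sigma^1(G)$, and that a character supported on $G_j$ lies in $\Sigma^1(G)$ precisely when its restriction lies in $\Sigma^1(G_j)$; together these give $\Sigma^c(G)\subseteq\bigsqcup_j S(G_j)$ and $\Sigma^c(G)\cap S(G_j)=\Sigma^c(G_j)$, which is the assertion. The hypothesis $r_1>0$ only serves to make $S(G)$ nonempty.

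The mixed case is where I expect the main difficulty. I would split $G=A\times B$ with $A=G_i$ and $B=\prod_{l\ne i}G_l$ chosen so that $\chi|_A\ne0$ and $\chi|_B\ne0$, and pick $a\in A$, $b\in B$ with $\chi(a),\chi(b)>0$. The point is that $a$ commutes with all of $B$ and $b$ with all of $A$, so the two coordinates can be edited independently and one coordinate can serve as a reservoir of positive $\chi$-value while the other is modified. Concretely I would record two reduction lemmas: the vertices $(g_A,h)$ and $(1,h)$ are joined inside $\Gamma_{\ge0}$ once $\chi(h)$ exceeds a constant depending only on $g_A$ (spell out $g_A^{-1}$ as a fixed word in $X_A^{\pm1}$, whose partial sums drop $\chi$ by a bounded amount), and symmetrically with the roles of $A$ and $B$ reversed; meanwhile multiplying by $b^k$ or $a^l$ raises the relevant coordinate monotonically and so stays in $\Gamma_{\ge0}$. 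I would then route an arbitrary vertex $g=(g_A,g_B)$ of nonnegative level to the ``high corner'' $H=\{(a^p,b^q)\mid p,q\ge0\}$ by the sequence $(g_A,g_B)\to(g_A,g_Bb^k)\to(1,g_Bb^k)\to(a^l,g_Bb^k)\to(a^l,1)\in H$, each arrow staying above level $0$ by the reduction lemmas for suitably large $k,l$. Since $H$ is itself connected inside $\Gamma_{\ge0}$ (one can always increase $p$ or $q$ along an edge), this shows $\Gamma_{\ge0}$ is connected, hence $[\chi]\in\Sigma^1(G)$. The essential subtlety, and the reason one routes to $H$ rather than to the identity, is that bringing a single coordinate all the way down to $1$ would require exactly the connectivity of a single-factor level set, which is unavailable for mixed characters.

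For a character supported on one factor I would write $G=A\times B$ with $A=G_j$ and $\chi|_B=0$, so that $\chi(g_A,g_B)=\chi_j(g_A)$. If $[\chi_j]\in\Sigma^1(A)$, then any $(g_A,g_B)$ of level $\ge0$ is joined to $(g_A,1)$ by a $B$-path along which $\chi$ is constant (as $\chi|_B=0$), and $(g_A,1)$ is joined to $(1,1)$ by lifting a path of $\Gamma_{\ge0}(A,\chi_j)$; hence $[\chi]\in\Sigma^1(G)$. Conversely, the projection $\pi_A:G\to A$ satisfies $\chi_j\circ\pi_A=\chi$ and sends $X$ into $X_A^{\pm1}\cup\{1\}$, so it induces a surjective graph map $\Gamma_{\ge0}(G,\chi)\to\Gamma_{\ge0}(A,\chi_j)$ (collapsing the $B$-edges); a connected graph has connected image, so $[\chi]\in\Sigma^1(G)$ forces $[\chi_j]\in\Sigma^1(A)$. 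Combining the two cases yields the displayed equality. This route uses only that finite subproducts of finitely generated groups are finitely generated together with the standard connectivity criterion, so no appeal to the full graph-product machinery is needed for the direct-product case.
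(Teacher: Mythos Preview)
The paper does not give its own proof of this statement: Theorem \ref{meinert} is quoted from Meinert \cite{meinert} and closed with a $\Box$, the argument being deferred entirely to that reference (which in fact treats the more general setting of graph products). Your proposal therefore supplies what the paper deliberately omits, and the approach is sound.

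A couple of points are worth tightening. For the phrase ``multiplying by $b^k$ or $a^l$ raises the relevant coordinate monotonically and so stays in $\Gamma_{\ge0}$'' to make literal sense as a path in $\mathrm{Cay}(G,X)$, you need $a$ and $b$ to be generators (or inverses of generators), not arbitrary elements; this is available since $\chi|_A\ne0$ forces $\chi(s)\ne0$ for some $s\in X_A$, and one takes $a=s^{\pm1}$ accordingly, and similarly for $b$. With that choice each multiplication by $a$ or $b$ is a single edge strictly increasing the level, and your four-step route to $H$ goes through as written (choose $k$ large in terms of a fixed word for $g_A^{-1}$, then $l$ large in terms of a fixed word for $(g_Bb^k)^{-1}$). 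In the single-factor converse, you should also record that $\pi_A$ is surjective on \emph{edges} of the nonnegative subgraphs, not just vertices: any edge $(g_A,g_As)$ of $\Gamma_{\ge0}(A,\chi_j)$ is the image of the edge from $(g_A,1)$ to $(g_As,1)$ in $\Gamma_{\ge0}(G,\chi)$, so connectivity of the source does force connectivity of the target. With these clarifications your argument is a correct, self-contained proof of the direct-product case via the Cayley-graph connectivity criterion, bypassing the graph-product machinery that Meinert's original paper uses.
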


We will exploit the fact that any $\phi\in \aut(G)$ induces a homeomorphism of the character sphere $S(G)$ which
preserves its rational structure.  Recall that an element $[\chi]\in S(G)$ is called {\it discrete} (or {\it rational}) 
if $Im(\chi)\subset \mathbb{R}$ is infinite cyclic, equivalently, $\chi$ may be chosen to take values in $\mathbb{Q}\subset \mathbb{R}$.  The set of rational points in $S(G)$ is denoted $S_\mathbb{Q}(G)$.  We denote by $D_\mathbb{Q}(G)$ the set of isolated rational points in $\Sigma^c(G)$.  The set of all limit points of $D_\mathbb{Q}(G)$ which are 
contained in $S_\mathbb{Q}(G)$ will be denoted by 
$L_\mathbb{Q}(G)$.   
Also, we denote by $L(G)$ the set of all limit points of $\Sigma^c(G)$.  
Since $\Sigma^c(G)$ is closed,  $L_\mathbb{Q}(G)$ and $L(G)$ are subsets of $\Sigma^c(G)$.  
Any homeomorphism of $\Sigma^c(G)$ induced by an automorphism of $G$ maps $D_\mathbb{Q}(G), 
L_\mathbb{Q}(G), L(G)$ respectively onto itself.

We are now ready to prove the following theorem.   The proof is essentially the same in spirit as that of \cite[Theorem 3.3]{gk}.   See also \cite[\S 4c]{gk}.

\begin{theorem} 
Suppose that $G=G_1\times\cdots\times G_n$, $n\ge 1$, is finitely generated and that any one of the following holds: 
(i) the set $D_\mathbb{Q}(G_1)$ is non-empty, finite, and is contained in an open hemisphere and $D_\mathbb{Q}(G_j)$ is finite (possibly empty) for $2\le j\le n$,
(ii) the set $L_\mathbb{Q}(G_1)$ is non-empty, finite, and is contained in an open hemisphere and $L_\mathbb{Q}(G_j)$ is a finite set (possibly empty) for $2\le j\le n$, (iii) the set $L(G_1)\cap S_\mathbb{Q}(G_1)$ is a non-empty finite set contained 
in an open hemisphere and the set $L(G_j)\cap S_\mathbb{Q}(G_j)$ is finite (possibly empty) for $2\le j\le n$. 
Then $G$ has the $R_\infty$-property.
\end{theorem}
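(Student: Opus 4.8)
The plan is to exploit the fact that each automorphism $\phi\in\aut(G)$ induces a homeomorphism $\phi^*$ of $S(G)$ preserving the rational structure, together with Meinert's theorem (Theorem~\ref{meinert}), which identifies the relevant distinguished subsets of $\Sigma^c(G)$ as the disjoint unions over the factors. In all three cases we will produce, for a finite-index subgroup of $\aut(G)$, a nonzero character $\chi$ on $G$ that is literally fixed (not just up to scalar) by the induced action on $\hom(G,\mathbb{R})$, and then invoke the addition formula, Lemma~\ref{additionformula}(i), with $N=\ker\chi$ and $\bar\theta=\mathrm{id}$, to conclude $R(\phi)=\infty$; combined with the standard reduction (\S\ref{periodic}) that it suffices to check $R$ on coset representatives of $\out(G)$, this gives the $R_\infty$-property.

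First I would set up the common machinery. Under each hypothesis let $W$ denote the distinguished set: $W=D_\mathbb{Q}(G)$ in case (i), $W=L_\mathbb{Q}(G)$ in case (ii), $W=L(G)\cap S_\mathbb{Q}(G)$ in case (iii). In every case $W$ is a nonempty finite set of rational points of $S(G)$, it is carried to itself by $\phi^*$ for every $\phi\in\aut(G)$ (by the remarks preceding the theorem), and—here is where Meinert enters—$W\cap S(G_1)$ is nonempty (it equals the corresponding set for $G_1$, which is nonempty by hypothesis) while $W$ is contained in the union of the $S(G_j)$. Since $W$ is finite, the homomorphism $\aut(G)\to\mathrm{Sym}(W)$ has finite-index kernel $K$, and by the reduction of \S\ref{periodic} it is enough to show $R(\phi)=\infty$ for $\phi\in K$. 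For such $\phi$, each point $[\chi]\in W$ is fixed by $\phi^*$; because $[\chi]$ is \emph{discrete}, the argument of Gon\c calves--Kochloukova recalled in \S\ref{sigmath} upgrades this to $\chi\circ\phi=\chi$ for a suitable rational representative $\chi$. Applying Lemma~\ref{additionformula}(i) then yields $R(\phi)=\infty$.

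Next I would handle the one genuinely non-formal ingredient: the ``open hemisphere'' hypothesis on $W\cap S(G_1)$. A priori the fixed rational characters $\chi$ obtained above could conceivably be killed after we average or combine them, but in fact we do not need to combine anything—a single fixed discrete character already suffices for the addition-formula argument, so the hemisphere hypothesis is used only to guarantee that $K$ does more than permute $W$: it ensures that on the points of $W\cap S(G_1)$, which all lie strictly inside one hemisphere, the action of the finite group $\mathrm{Sym}(W\cap S(G_1))$ realized through $\aut(G)$ cannot force a representative character to have a sign ambiguity. Concretely, if $[\chi_1],\dots,[\chi_m]$ are the points of $W\cap S(G_1)$ and they lie in the open hemisphere $\{\,[\lambda]:\lambda(\gamma)>0\,\}$ for some fixed $\gamma\in G$, then the orbit sum $\sum_{i}\chi_i$ over an $\langle\phi^*\rangle$-orbit remains nonzero for \emph{every} $\phi\in\aut(G)$ (its value on $\gamma$ is a sum of positive numbers), so this orbit sum is a nonzero $\phi$-fixed character, and again Lemma~\ref{additionformula}(i) gives $R(\phi)=\infty$—now even without passing to a finite-index subgroup. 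This is the route that makes the three cases uniform, and the proof that the three distinguished sets are each $\aut$-invariant and each meet $S(G_1)$ nontrivially is exactly the content of the discussion preceding the statement together with Theorem~\ref{meinert}.

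The main obstacle, and the step deserving the most care, is verifying that in cases (ii) and (iii) the relevant set really is a set of \emph{rational} (discrete) character classes on which the upgrading argument of \S\ref{sigmath} applies, and that $\phi^*$ preserves it as a subset of $S(G)$ and not merely up to the scaling action; once that is in place, the hemisphere hypothesis delivers a genuine nonzero invariant character and the conclusion is immediate from Lemma~\ref{additionformula}(i). I expect the bookkeeping of which points of $\Sigma^c(G)$ descend from which factor—i.e.\ a careful application of Meinert's theorem to separate $S(G_1)$ from the other $S(G_j)$—to be the only place where anything must be checked with real attention.
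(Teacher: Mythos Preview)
Your orbit-sum argument in the third paragraph is the right idea and is essentially what the paper does, but the first two paragraphs take a wrong turn that you should discard. The reduction of \S\ref{periodic} says only that $R(\phi)=R(\iota_g\circ\phi)$, so it suffices to check one representative per coset of $Inn(G)$; it does \emph{not} say that establishing $R(\phi)=\infty$ for all $\phi$ in a finite-index subgroup $K\subset\aut(G)$ yields the $R_\infty$-property. Showing $R(\phi)=\infty$ only for $\phi\in K=\ker(\aut(G)\to\mathrm{Sym}(W))$ recovers precisely the weaker statement of Gon\c{c}alves--Kochloukova that this theorem is meant to strengthen. So the finite-index detour proves nothing new, and the hemisphere hypothesis is not, as you suggest, a side issue about ``sign ambiguity'' for $\phi\in K$: it is the entire point, and it is what makes the argument work for \emph{every} $\phi$.

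The paper's proof goes directly to your third-paragraph idea. For an arbitrary $\phi\in\aut(G)$, pick $[\chi]\in W\cap S(G_1)$, let $q$ be the size of its $\phi^*$-orbit inside the finite set $W$, and set $\lambda=\sum_{0\le j<q}\chi\circ\phi^j$. Since $\chi$ is discrete, $\chi\circ\phi^q=\chi$, so $\lambda\circ\phi=\lambda$. To see $\lambda\ne 0$ one restricts to $G_1$: by Meinert's theorem each orbit point lies in some $S(G_k)$, and those with $k\ne 1$ vanish identically on $G_1$, so $\lambda|_{G_1}$ is the sum of those $\chi\circ\phi^j$ whose class lies in $W\cap S(G_1)$; the hemisphere hypothesis places all of these in an open half-space of $\hom(G_1,\mathbb{R})$, so their sum is nonzero. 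Your write-up blurs the distinction between the $\phi^*$-orbit of $[\chi]$ and all of $W\cap S(G_1)$ (the orbit may well leave $S(G_1)$ and visit other factors), so the claim that ``its value on $\gamma$ is a sum of positive numbers'' needs the correction that the terms coming from $S(G_k)$ with $k\ne 1$ contribute zero, while at least one term (the initial $\chi$) is genuinely positive on a suitable $\gamma\in G_1$.
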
 

\begin{proof} 
Let $\phi\in \aut(G)$.  We shall show that there exists a discrete character $\lambda\in \hom(G,\mathbb{R})$  
such that $\lambda\circ \phi=\lambda$. 
By the discussion in \S\ref{sigmath}, it follows that $R(\phi)=\infty$ and it follows that $G$ has the $R_\infty$-property.

First we suppose that $n=1$.  The theorem, then, is essentially due to Gon\c{c}alves and Kochloukova \cite{gk}.  
Let $\phi\in \aut(G)$ and let $\phi^*:\Sigma^c(G)\to \Sigma^c(G)$ be the induced map, defined as $\phi^*([\chi])=[\chi\circ\phi]$.     The map $\phi^*:\Sigma^c(G)\to \Sigma^c(G)$ being a homeomorphism, it maps isolated points to isolated points.   Moreover, $\phi^*$ preserves the set of all rational points in $\Sigma^c(G)$.  
It follows that $\phi^*(W)=W$ where $W$ is one of the sets $D_\mathbb{Q}(G), L_\mathbb{Q}(G)$ or $L(G)\cap S_\mathbb{Q}(G)$. 

In each of the cases (i)-(iii), we see that there is a 
non-empty finite set of rational character classes $W(G)\subset S_\mathbb{Q}(G)$ which is contained in an open hemisphere and is mapped to itself by $\phi^*$.  Suppose that $[\chi]\in W(G)$ and that the orbit of $[\chi]$ under
$\phi^*,$ namely the set $\{(\phi^*)^j([\chi]=[\chi\circ\phi^j]\mid j\in \mathbb{N}\}$, has $k$ elements.  Then the  
orbit sum $\lambda:=\sum_{0\le j< k}\chi\circ \phi^j\in \hom(G,\mathbb{R})$ is a non-zero discrete character 
invariant under $\phi^*$, as was to be shown.

Now let $n=2$.  
By Meinert's theorem (Theorem \ref{meinert}) $D_\mathbb{Q}(G)=D_\mathbb{Q}(G_1)\sqcup D_\mathbb{Q}(G_2)$, 
$L_\mathbb{Q}(G)=L_\mathbb{Q}(G_1)\sqcup L_\mathbb{Q}(G_2)$ and $L(G)=L(G_1)\sqcup L(G_2)$.

Case (i).  Let $[\chi]\in D_\mathbb{Q}(G_1)$. Consider the $\phi^*$-orbit of $[\chi]$, namely, 
$\{(\phi^k)^*([\chi])=[\chi\circ\phi^k]\mid k\in \mathbb{Z}\}$.  This set is finite since it is contained in $D_\mathbb{Q}(G)=D_\mathbb{Q}(G_1)\sqcup D_\mathbb{Q}(G_2)$, which is finite.   Suppose that $[\chi\circ \phi^j], 0\le j<q,$ are the distinct 
rational points in the orbit.   
We claim that the orbit sum $\lambda:=\sum_{0\le j<q}\chi\circ \phi^j$ is a {\it non-zero} character such that 
$\lambda\circ \phi=\lambda$.   To see that $\lambda\in \hom(G,\mathbb{R})$ is non-zero, note that its restriction to $G_1$ 
is the character $\lambda_J=\sum_{j\in J} \chi\circ \phi^j$ where $J:=\{j<q\mid [\chi\circ\phi^j]\in D_\mathbb{Q}(G_1)\}$. 
Since $D_\mathbb{Q}(G_1)$ is contained in an open hemisphere, the characters $\chi\circ \phi^j, j\in J,$ are in an 
open half-space of $\hom(G_1,\mathbb{R})$.  Therefore the same is true of their sum, $\lambda_1$, and we conclude that 
$\lambda\ne 0$.  It is clear that $\lambda\circ \phi=\lambda$ since $[\lambda\circ\phi]=[\lambda]$ and $\lambda$ is 
rational.  As observed in the first para of \S\ref{sigmath}, this implies that $R(\phi)=\infty$.  

Proof in cases (ii) is almost identical, starting with a $[\chi]\in L_\mathbb{Q}(G_1)$. We need only observe that 
$\phi^*(L_\mathbb{Q}(G))=L_\mathbb{Q}(G)$ and that as in case (ii), $L_\mathbb{Q}(G)=L_\mathbb{Q}(G_1)\sqcup 
L_\mathbb{Q}(G_2)$ is finite.   The orbit sum $\lambda:=\sum_{0\le j<q} \chi\circ \phi^j$ is again a non-zero 
character which is discrete and satisfies $\lambda\circ \phi=\lambda$.  Again we conclude that $R(\phi)=\infty$. 

Case (iii).  Again we start with a $\chi\in L(G_1)\cap S_\mathbb{Q}(G_1)$ and proceed as in case (ii).  We leave  
the details to the reader.

Finally, let $n\ge 3$ be arbitrary.  Let $H=G_2\times\cdots\times G_n$.   Again by Meinert's theorem 
$D_\mathbb{Q}(H)=\sqcup_{2\le j\le n}D_\mathbb{Q}(G_j)$ and similar expressions hold for $L_\mathbb{Q}(H)$ and $L(H)\cap S_\mathbb{Q}(H)$.  Our hypotheses on $G_j$ implies that one of the sets $D_\mathbb{Q}(H), L_\mathbb{Q}(H), L(G)\cap S_\mathbb{Q}(G)$ is finite depending on case (i), (ii), and (iii) respectively.  Since $G=G_1\times H$, 
we are now reduced to the situation where $n=2$, which has just been established. This completes the 
proof.
\end{proof}
  
We conclude the paper with the following examples.

\begin{examples}
(i)  Examples of groups with $D_\mathbb{Q}(G)$ non-empty, finite, and contained in an open hemisphere 
are known.    These include non-polycyclic nilpotent-by-finite groups of type $FP_\infty$, 
the generalized Richard Thompson groups $F_{n,\infty}$, the double of a knot group $K$ with non-finitely 
generated commutator subgroup (thus $G\cong K\star_{\mathbb{Z}^2}K$). 
For details see \cite[\S4]{gk}.

(ii) Examples of groups with $D_\mathbb{Q}(G)$ and $L_\mathbb{Q}(G)$ being finite sets are finite groups, the Houghton groups \cite{brown}, the pure symmetric automorphism groups \cite{o}, finitely generated infinite groups with finite abelianization (which include the generalized Richard Thompson groups $T_{n,r}$; see \cite[p. 64]{brown}),  $\mathbb{Z}^n, n\ge 1$, and 
the free groups of rank $n\ge 2$.   Another class of such groups is provided by \cite[Theorem 8.1]{bns}.  Consider 
a finitely generated 
group $G$ which is a subgroup of the group of all orientation preserving PL-homeomorphisms of the interval $[0,1]$.  The group $G$ is said to be 
{\it irreducible} if there is no $G$ fixed point in $(0,1)$.  The logarithms of the slopes near the end points $0,1$, 
define characters $\chi_0,\chi_1: G\to \mathbb{R}$ respectively.  We recall that two characters $\lambda, \chi$ are independent if $\lambda(\ker(\chi))=\lambda(G)$ and $\chi(\ker(\lambda))=\chi(G)$.  
It was shown in \cite[Theorem 8.1]{bns} that $\Sigma^c(G)=\{[\chi_0],[\chi_1]\}$ if $G$ is irreducible and $\chi_0,\chi_1$ 
are independent.   (These points may not be in $S_\mathbb{Q}(G)$; see \cite[p. 470]{bns}. ) 

(iii) Let $G=G_1\times G_2$ where $G_1$ is a finite product of groups (with $G_1$ non-trivial) as in example (i) 
and $G_2$, a finite product of groups as in example (ii) above.  Then $G$ has the $R_\infty$-property.  Since there are continuously many pairwise 
non-isomorphic two generated infinite simple groups, taking $G_2$ to be any one of them, we obtain a 
continuous family of groups with $R_\infty$-property.
\end{examples}

\noindent
{\bf Acknowledgments:}  We thank Dessislava Kochloukova for pointing out to us the paper \cite{cox} 
concerning the Houghton groups.

\end{document}